\numberwithin{equation}{section}
\newtheorem{theorem}{Theorem}[section]
\newtheorem{lemma}[theorem]{Lemma}
\newtheorem{corollary}[theorem]{Corollary}
\theoremstyle{remark}
\theoremstyle{definition}
\newtheorem{definition}[theorem]{Definition}
\newtheorem{remark}[theorem]{Remark}
\def\C{{\mathbb{C}}}
\def\R{{\mathbb{R}}}
\def\T{{\mathbb{T}}}
\def\Tcal{{\mathcal{T}}}
\def\Mcal{{\mathcal{M}}}
\def\Fcal{{\mathcal{F}}}
\def\Tfrak{{\mathfrak{T}}}
\newcommand{\jb}[1]{\langle #1 \rangle}
\renewcommand{\hat}[1]{\widehat{#1}}
\newcommand{\bxi}{\overline{\xi}}
\newcommand{\bmxi}{\bm{\xi}} 
\newcommand{\ind}{\mathbf{1}} 
\newcommand{\noi}{\noindent}
\newcommand{\Qcal}{\mathcal{Q}}
\newcommand{\TT}{\textup{T}}
\newcommand{\TF}{\mathfrak{T}}
\newcommand{\SF}{\mathfrak{S}}
\newcommand{\F}{\mathcal{F}}
\newcommand{\wt}{\widetilde}
\newcommand{\cj}{\overline}
\newcommand{\dx}{\partial_x}
\newcommand{\dt}{\partial_t}
\title[Unconditional Uniqueness for DNLS on $\R$]{
Unconditional uniqueness for the derivative nonlinear Schr\"{o}dinger equation 
on the real line}
\author[R. Mosincat]{Razvan Mosincat}
\email{r.o.mosincat@sms.ed.ac.uk}
\address{
Maxwell Institute for Mathematical Sciences\\
School of Mathematics\\
University of Edinburgh\\ 
Edinburgh\\
UK\\
EH9 3FD}
\author[H. Yoon]{Haewon Yoon}
\email{hwyoon@ncts.ntu.edu.tw}
\address{National Center for Theoretical Sciences, No. 1 Sec. 4 Roosevelt Rd., National Taiwan University, Taipei, 10617, Taiwan}
\date{\today}
\keywords{derivative nonlinear Schr\"odinger equation; unconditional well-posedness; normal form method}
\subjclass[2010]{35Q55}
\begin{document}

\selectlanguage{english}

\maketitle

\begin{abstract}

We prove the unconditional uniqueness of solutions to 
the derivative nonlinear Schr\"odinger equation (DNLS) in an almost end-point regularity.  
To this purpose, 
we employ the normal form method and we transform (a gauge-equivalent) DNLS 
into a new equation (the so-called normal form equation) 
for which nonlinear estimates can be easily established in $H^s(\R)$, $s>\frac12$, 
without appealing to an auxiliary function space. 
Also, we prove that low-regularity solutions of DNLS 
satisfy the normal form equation 
and this is done by means of estimates in the $H^{s-1}(\R)$-norm.  
\end{abstract}


\section{Introduction}

We consider the initial-value problem for the derivative nonlinear
Schr\"odinger equation (DNLS) on the real line, i.e.
\begin{equation}
  \label{DNLS}
  \begin{cases}
    i \partial_t u + \partial_x^2 u = i\partial_x (|u|^2u)\\
    u|_{t=0}=u_0\in H^s(\R),
  \end{cases}
  \quad(t,x)\in\R\times\R,
\end{equation}
where $u$ is a complex-valued unknown.  This PDE arises as a model
equation in plasma physics, see e.g. \cite{Rogister71, Mjolhus76}.
Moreover, since it is completely integrable \cite{KaupNewell} it has a
rich structure (e.g. infinitely many conservation laws).  From the
analytical point of view, it poses interesting technical challenges
due to the presence of the derivative in the nonlinear cubic term in
the context of Schr\"{o}dinger dispersion.

The initial-value problem \eqref{DNLS} has been intensely studied both
for smooth, high-regularity (say, $s\ge 1$) initial data
\cite{Lee89,HayashiOzawaSIAM,JLPS,PelinShimab} as well as for
low-regularity initial data \cite{Tak99, CKSTT, CKSTTrefined,
  MiaoWuXu, GuoWu,Pornnopparath}. 
  For the discussion of this section, it is
relevant to recall the result of \cite{Tak99}: by using the Fourier
restriction norm method (i.e. using $X^{s,b}$ spaces) and a gauge
transformation (see e.g. \cite{HayashiOzawaSIAM}), Takaoka showed that
DNLS is locally well-posed in $H^s(\R)$, for $s\geq \frac12$.
However, the uniqueness of solutions holds \emph{conditionally}: for
any $u_0\in H^s(\R)$, there exist $T>0$ and a unique solution
$u\in C([-T,T]; H^s(\R))\cap X_T$ to \eqref{DNLS}, where $X_T$ is some
auxiliary function space.  In other words, for given initial data, the
solution is guaranteed to be unique \emph{only} in the subspace
$ C([-T,T]; H^s(\R))\cap X_T$.

\subsection{Main result}

In this paper, we study the uniqueness of low-regularity solutions to
DNLS.  In particular, we are preoccupied to establish the
\emph{unconditional uniqueness} of solutions to \eqref{DNLS} in
$H^s(\R)$, for $s<1$.

Generally speaking, provided that we can make sense of the
nonlinearity (as a distribution) without assuming that the solution
belongs to some auxiliary function space $X_T$, we establish the
\emph{unconditional well-posedness} for a given PDE by removing the
auxiliary function space from the uniqueness statement of its
well-posedness theory.

Our main result is the following:

\begin{theorem}
  \label{mainthm}
  Let $s>\frac12$.  Then, DNLS is unconditionally (locally) well-posed
  in $H^s(\R)$.
\end{theorem}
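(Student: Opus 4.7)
Existence in $H^s$, $s\ge\frac12$, is already provided by Takaoka's result \cite{Tak99}, so the task reduces to promoting his \emph{conditional} uniqueness in $C([-T,T];H^s)\cap X_T$ into \emph{unconditional} uniqueness in $C([-T,T];H^s)$. The plan is the normal-form strategy: after passing to a gauge equivalent of DNLS, perform an integration by parts in time in the Duhamel formulation to produce a new equation, the \emph{normal-form equation}, for which uniqueness in $C([-T,T];H^s(\R))$ can be established by multilinear estimates staying entirely within the Sobolev scale, and then show that every low-regularity $H^s$ solution of the gauged equation also satisfies this normal-form equation.

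I would begin with the Hayashi--Ozawa gauge $v = u\exp(-i\int_{-\infty}^x|u(y)|^2\,dy)$, which is a bi-Lipschitz self-map of $C([-T,T];H^s(\R))$ for $s>\frac12$ (via $H^s\hookrightarrow L^\infty$) and transforms DNLS into an equation of the schematic form $i\dt v + \dx^2 v = -iv^2\overline{\dx v} - \frac12|v|^4 v$, in which the resonant contribution $|u|^2\dx u$ has been removed. Passing to the interaction representation $w(t) = S(-t)v(t)$ with $S(t) = e^{it\dx^2}$, the trilinear Duhamel integral carries the oscillatory phase $\Phi = 2(\xi-\xi_1)(\xi-\xi_2)$ at the frequencies $\xi = \xi_1+\xi_2+\xi_3$. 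A single integration by parts in time via
\[
\int_0^t e^{is\Phi}F(s)\,ds = \frac{e^{it\Phi}F(t) - F(0)}{i\Phi} - \int_0^t \frac{e^{is\Phi}}{i\Phi}\,\dd_s F(s)\,ds
\]
then produces the normal-form equation: two cubic boundary terms carrying a gain $1/\Phi$, plus a quintic remainder obtained by substituting the equation for $\dd_s\widehat{v}$ and inheriting the same $1/\Phi$ smoothing.

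Uniqueness for the normal-form equation would then follow from trilinear and quintilinear estimates directly in $H^s$. A dyadic decomposition in frequency combined with the factorization $|\Phi| = 2|\xi-\xi_1||\xi-\xi_2|$ shows that the smoothing factor $1/\Phi$ exactly compensates the derivative loss in the nonlinearity, and the Sobolev embedding $H^s\hookrightarrow L^\infty$ handles the off-diagonal (low-high) interactions. This yields a Lipschitz bound of size $T^\theta$ with $\theta>0$ on the difference of two candidate solutions in $C([-T,T];H^s(\R))$, and hence uniqueness on a short time interval that extends by iteration.

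The heart of the argument, and the main technical obstacle, is to verify that every $H^s$ solution of the gauged equation actually satisfies the normal-form equation. The formal integration by parts involves $\dd_s$ of the trilinear nonlinearity, which at this regularity lies only in $H^{s-1}$ with $s-1<0$. I would justify the manipulation by mollifying the initial data, applying the IBP identity to the corresponding smooth solutions (for which the classical Duhamel and the normal-form formulations agree), and passing to the limit through sharp multilinear estimates in the $H^{s-1}$ norm. The threshold $s>\frac12$ appears precisely here: it is the exact regularity at which the trilinear $H^{s-1}$ estimate closes, with the surplus $s-\frac12>0$ providing the margin required by Sobolev product estimates, and also the regularity at which the DNLS nonlinearity makes distributional sense without appealing to any auxiliary norm. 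Combined with the uniqueness for the normal-form equation and undone by the gauge, this forces any two $H^s$ solutions of DNLS with the same initial datum to coincide, which together with Takaoka's existence and continuous dependence yields unconditional local well-posedness.
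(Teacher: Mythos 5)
Your high-level plan (gauge transform, interaction representation, integrate by parts in time, prove estimates in $H^s$ for the new equation, then show rough solutions satisfy it) is indeed the paper's strategy, but two of your concrete steps fail. First, a \emph{single} integration by parts does not close. When you substitute $\partial_t \hat v = \widehat{\Tcal(v)}+\widehat{\Qcal(v)}$ into the remainder, the quintic term acquires a new derivative factor (schematically $\xi_{12}$, the second-generation frequency), while the only available denominator is still $\Phi(\bxi)=2(\xi_2-\xi_1)(\xi_2-\xi_3)$, which involves only the first-generation frequencies; letting $\xi_{11}\approx\xi_{12}\to\infty$ with $\xi_1$ bounded shows the quintic remainder does \emph{not} map $(H^s)^5\to H^s$, i.e.\ it suffers exactly the same derivative loss as $\Tcal(v)$ itself. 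This is why the paper runs an \emph{infinite} iteration of normal form reductions, with ordered-tree bookkeeping, thresholds $\beta_J$ at each generation, and a proof that the remainder $\Tcal^{(J+1)}_{\Tcal}(v)$ tends to $0$ in $H^{s-1}$ as $J\to\infty$ (Lemma~\ref{lem:convtozero}); your quintic "remainder inheriting the same $1/\Phi$ smoothing" simply does not have that smoothing. Relatedly, you omit the resonant/non-resonant splitting: integrating by parts on the whole hyperplane makes the boundary terms singular near $\Phi=0$, and since the boundary terms $\Tcal_0^{(j)}(v)(t)$ are not time-integrated, no factor $T^{\theta}$ is available for them -- their smallness in the contraction comes from the threshold, via the $N^{-\frac12 J}$ gains of Lemmata~\ref{lem:TcalTcal1Jp1}--\ref{lem:TcalQcalJp1}. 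A contraction based purely on $T^\theta$ cannot close. (Minor: on the hyperplane $\xi=\xi_1-\xi_2+\xi_3$ the phase factors as $2(\xi-\xi_1)(\xi-\xi_3)$, not $2(\xi-\xi_1)(\xi-\xi_2)$.)

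Second, your justification step is circular. Mollifying the \emph{initial data} and passing to the limit of the corresponding smooth solutions only tells you something about the limit of those smooth solutions, which by the conditional theory lies in $C_TH^s\cap X_T$; identifying that limit with the arbitrarily given solution $v\in C(I;H^s(\R))$ is precisely the unconditional uniqueness you are trying to prove, and no stability estimate at this regularity is available for $v$ without it. The paper instead justifies the two formal operations (the product rule in $t$ and the exchange of $\partial_t$ with the frequency integrals) \emph{directly for the rough solution}: from the equation one gets $\hat v\in C^1_t(I;C_\xi(\R))$, the weak-norm estimates of Lemma~\ref{estTcalvHsm1}, Corollary~\ref{lem:dtvest}, and Lemma~\ref{lem:WE2} control all terms in $H^{s-1}(\R)$, and dominated convergence legitimizes the differentiation under the integral at every generation; combined with the $H^{s-1}$ vanishing of the remainder this shows any $C(I;H^s)$ solution of the gauged equation satisfies the (infinite) normal form equation. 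Your instinct that the $H^{s-1}$ norm is the right weak topology and that $s>\frac12$ is where the trilinear weak estimate closes is correct, but it must be deployed on the given rough solution itself, not on a sequence of smooth solutions whose limit you cannot identify.
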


The \emph{unconditional well-posedness} is a notion of well-posedness
that does not depend on how the solutions were constructed.  It was
Kato \cite{Kato95} who first studied the issue of whether or not one
can remove an auxiliary function space from the well-posedness
statement for the nonlinear Schr\"{o}dinger equation and thus
strengthen its uniqueness property.  Since then, the uniqueness of
solutions for various other nonlinear dispersive PDEs was
investigated -- see
e.g. \cite{ChristNonuniqueness, FurPlanchonTer, GKO,  KishimotoProc, KOY, Zhou}.

The proof of Theorem~\ref{mainthm} is based on the normal form
approach to unconditional well-posedness of Kwon, Oh, and Yoon
\cite{KOY}, where it was developed an infinite iteration scheme of
normal form reductions in an abstract form for nonlinear dispersive
PDEs on the real line.  This approach builds upon previous works
\cite{GKO,KwOh11} where the normal form method was applied to PDEs
with periodic boundary conditions.  In addition, we also rely on the
abstract variation of the normal form method 
due to Kishimoto \cite{KishimotoProc}.

It is worthwhile mentioning here that the method of normal form reductions 
has other uses besides proving unconditional uniqueness. 
For example, 
it has been used by Oh and Wang \cite{OhWang} to exhibit energy estimates in negative
Sobolev spaces  for the periodic 
fourth order NLS with cubic nonlinearity.  
Also, by combining the normal form reductions idea with $X^{s,b}$-analysis, 
Erdo\u{g}an and Tzirakis~\cite{ErdoganTzirakis} 
proved a 
nonlinear smoothing property for the periodic
Korteweg-de Vries equation,  
and more recently for DNLS on the real line by Erdo\u{g}an, Gurel, and Tzirakis~\cite{ErdoganTzirakis}. 

In the following we describe the normal form approach for DNLS on the
real line.

\subsection{The normal form method for DNLS}
As in the work of Takaoka \cite{Tak99}, we have to use a gauge
transformation\footnote{More recently, 
Pornnopparath \cite{Pornnopparath} showed the local well-posedness of \eqref{DNLS} 
in $H^{s}(\R)$, $s\ge \frac12$, \emph{without} 
using a gauge transformation. In fact, the same result is shown to hold 
for a more general nonlinearity than in \eqref{DNLS}, 
namely a generic polynomial in $(u, \overline{u}, \dx u, \dx \overline{u})$ where all monomials 
have degree $\ge 3$ and at most one derivative.}   
(i.e. a nonlinear change of variable $u\mapsto w$) in
order to remove the nonlinearity $2i|u|^2 \dx u$ from \eqref{DNLS}.
See also Remark~\ref{rmk:dxonconjug}.  This transformation changes
favorably the cubic nonlinearity but introduces a (pure-power)
quintic term. Therefore, we begin with the following \emph{gauged
  DNLS} (see Section \ref{sect2}):
\begin{equation}
  \label{dtweq}
  i \partial_t w+ \partial_x^2 w = -iw^2\partial_x \overline{w} - \frac12 |w|^4w \ ,\ t\in I\,,
\end{equation}
where $I$ (with $0\in I$) is a time interval on which a solution $u$
to \eqref{DNLS} exists.  By setting $v(t)=e^{-it\partial_x^2} w(t)$
(the interaction representation of $w$), one can rewrite the gauged
DNLS as
\begin{gather}
  \begin{split}
    \partial_t v= \Tcal (v)+ \Qcal (v)
    &\colonequals \Fcal^{-1}\bigg\{-i\int_{\xi=\xi_1-\xi_2+\xi_3}e^{i\Phi(\bxi)t}\xi_2\hat{v}(t,\xi_1)\overline{\hat{v}(t,\xi_2)}\hat{v}(t,\xi_3)d\xi_1d\xi_2\bigg\}\\
    \label{eq:interaction}
    &\quad \quad -\frac12 \big|e^{-it\partial_x^2}v(t)\big|^4
    e^{-it\partial_x^2}v(t),
  \end{split}
\end{gather}
where $\Fcal$ denotes the Fourier transform in the spatial variable, 
and the modulation function $\Phi(\bxi)$ is given by
\[\Phi(\bxi)=\Phi(\xi,\xi_1,\xi_2,\xi_3)\colonequals
  \xi^2-\xi_1^2+\xi_2^2-\xi_3^2.\] Thanks to the algebra property of
$H^s(\R)$, $s>\frac{1}{2}$, we may focus our attention to the cubic
nonlinearity $\Tcal(v)$.  Indeed, the quintic term $\Qcal(v)$ can be
estimated easily:
\[\|\mathcal{Q}(v)\|_{H^s(\R)} \lesssim \|v\|_{H^s(\R)}^5.\] Such an
estimate clearly does not hold for $\Tcal(v)$ due to the presence of
the spatial derivative (``the derivative loss issue'').  Hence, we
proceed to iteratively substitute this nonlinearity with (infinitely
many) terms which are easily controlled in the $H^s(\R)$-norm.

Let us take the spatial Fourier transform of (the Duhamel formulation
of) \eqref{eq:interaction} and we formally integrate by parts in the
temporal variable to obtain:
\begin{gather}
  \begin{split}
    \label{eq:step1nfr}
    \hat{v}(t,\xi)=\hat{v}(0,\xi) &- \int_{\xi=\xi_1-\xi_2+\xi_3}\frac{e^{i\Phi(\bxi)t}\xi_2}{\Phi(\bxi)}\hat{v}(t',\xi_1)\overline{\hat{v}(t',\xi_2)}\hat{v}(t',\xi_3)d\xi_1d\xi_2\bigg|_{t'=0}^t\\
    & + \int_0^t \int_{\xi=\xi_1-\xi_2+\xi_3} \frac{e^{i\Phi(\bxi)t}\xi_2}{\Phi(\bxi)} \partial_t \Big[\hat{v}(t',\xi_1)\overline{\hat{v}(t',\xi_2)}\hat{v}(t',\xi_3)\Big]d\xi_1d\xi_2dt'\\
    &+ \int_0^t \widehat{\Qcal(v)}(t',\xi) dt' \,.
  \end{split}
\end{gather}
We first note that we aim to overcome the derivative loss issue of
$\Tcal(v)$ by exploiting the denominator $\Phi(\bxi)$ after such an
integration by parts step, at least in an integration region where the
modulation function $\Phi(\bxi)$ is large (i.e. ``away from resonant''
contribution to $\Tcal(v)$).  On the other hand, when the modulation
function $\Phi(\bxi)$ is in a neighborhood of $0$ (i.e. ``almost
resonant'' contribution to $\Tcal(v)$), the denominator would actually
work against us, being impossible to handle the terms appearing in
\eqref{eq:step1nfr} directly in the $H^s(\R)$-norm.


In our analysis we distinguish two cases, namely (i) the almost
resonant case: $|\Phi(\bxi)|\leq N$ and (ii) the away from resonant
case: $|\Phi(\bxi)|>N$, for some suitably large threshold
$N=N(\|v_0\|_{H^s})$.  In the case (i), thanks to the restriction on
the modulation, we can directly estimate the contribution of
$\Tcal(v)$ from \eqref{eq:interaction} in $H^s(\R)$, $s>\frac{1}{2}$
(see Corollary 3.5).  In the integration region (ii), we proceed to
perform the integration by parts as in \eqref{eq:step1nfr}.

In view of \eqref{eq:interaction}, the second integral in
\eqref{eq:step1nfr} can be written as the sum of quintic and septic
terms.  Indeed, by assuming that the temporal derivative falls on the
first factor, the second integral in \eqref{eq:step1nfr} can be
essentially written as
\begin{align}
  \nonumber
  &  \int_0^t \int_{\xi=\xi_1-\xi_2+\xi_3} \frac{e^{i\Phi(\bxi)t}\xi_2}{\Phi(\bxi)} \Big(\widehat{\Tcal(v)}(t',\xi_1)+\widehat{\Qcal(v)}(t',\xi_1)\Big)\overline{\hat{v}(t',\xi_2)}\hat{v}(t',\xi_3)d\xi_1d\xi_2dt'\\
  \nonumber
  &\quad \ \sim\ \int_0^t \int_{\substack{\xi=\xi_1-\xi_2+\xi_3\\\xi_1=\xi_{11}-\xi_{12}+\xi_{13}}} \frac{e^{i(\Phi(\bxi)+\Phi(\bxi_1))t}\xi_2\xi_{12}}{\Phi(\bxi)}\hat{v}(\xi_{11})\overline{\hat{v}(\xi_{12})}\hat{v}(\xi_{13}) \overline{\hat{v}(\xi_2)}\hat{v}(\xi_3)d\xi_{11}d\xi_{12}d\xi_1d\xi_2dt'\\
  \label{eq:step2nfr}
  &\quad\qquad+\int_0^t \int_{\xi=\xi_1-\xi_2+\xi_3} \frac{e^{i\Phi(\bxi)t}\xi_2}{\Phi(\bxi)} \widehat{\Qcal(v)}(t',\xi_1)\overline{\hat{v}(t',\xi_2)}\hat{v}(t',\xi_3)d\xi_1d\xi_2dt' \,,
\end{align}
where
$\Phi(\bxi_1)\colonequals\Phi(\xi_1,\xi_{11},\xi_{12},\xi_{13})$.
Although we have an $H^s(\R)$-estimate for the last term in
\eqref{eq:step2nfr}, the contribution due to $\Tcal(v)$ (i.e. the
quintic term in \eqref{eq:step2nfr}) suffers from the same derivative
loss issue as $\Tcal(v)$ itself.  The idea now is to repeat the
previous two-steps iteration.  First, we split the domain of the
second integral in \eqref{eq:step2nfr} again into (i) the almost
resonant case: $|\Phi(\bxi)+\Phi(\bxi_1)|\leq N_1$ where we can
establish an $H^s(\R)$-estimate and (ii) the away from resonant case:
$|\Phi(\bxi)+\Phi(\bxi_1)|>N_1$.  We then integrate by parts
\emph{only} in (ii) and exploit the gain of the denominator
$\Phi(\bxi)+\Phi(\bar{\xi_1})$ (the price paid being additional
nonlinearities of higher degrees).  It turns out that it is helpful to
chose the threshold $N_1\sim |\Phi(\bxi)|$ and we point out that at
this stage we have as well $|\Phi(\bxi)|>N$.  Regarding the two left
out terms, namely when the time derivative falls on the $k$th factor
($k=2,3$), we mention here that the factor
$e^{i(\Phi(\bxi)+\Phi(\bxi_1))t} \xi_1 \xi_{12}$ above changes to
$e^{i(\Phi(\bxi)+\Phi(\bxi_k))t} \xi_1 \xi_{k2}$ 
and that we use the same strategy as described above.

After $J$ iterations we derive the following equation
\begin{gather}
  \begin{split}
    \label{DuhFcompl}
    v(t) = &\, v_0 + \int_0^t \Qcal(v)(t')dt' + \sum_{j=2}^{J+1}
    \bigg(\Tcal_0^{(j)}(v)(t) - 
    \Tcal_0^{(j)}(v)(0) \bigg)
    + \sum_{j=2}^{J+1} \int_0^t \Tcal^{(j)}_{\Qcal}(v)(t') dt' \\
    &\,\,+ \sum_{j=1}^{J} \int_0^t \Tcal^{(j)}_{\Tcal,1}(v)(t') dt' +
    \int_0^t \Tcal^{(J+1)}_{\Tcal}(v)(t') dt'
  \end{split}
\end{gather}
and the nonlinearity $\Tcal^{(J+1)}_{\Tcal}(v)$ is passed on to the
next iteration.  In comparing \eqref{eq:interaction} with
\eqref{DuhFcompl}, notice that we have replaced the nonlinearity
$\Tcal(v)$ by several terms whose origin (at iteration $j$) we briefly
explain here: the $\Tcal_0^{(j)}(v)$ term denotes the boundary terms
that appear when integrating by parts, $\mathcal{T}_\Qcal^{(j)}(v)$
stands for the terms corresponding to replacing $\dt v$ by $\Qcal(v)$,
$\Tcal_{\Tcal,1}^{(j)}$ stands for the terms corresponding to replacing
$\dt v$ by $\Tcal(v)$ followed by restricting the appropriate
modulation function to the almost resonant case, and finally
$\Tcal^{(J+1)}_{\Tcal}(v)$ is ``the remainder term'' which is passed
to the $(J+1)$th iteration.
Since $\dt $ may fall on any of the factors of $v$, it becomes
apparent that one has to manage the bookkeeping of terms (whose number
grows facorially in $J$).  We accomplish this by using the notion of
\emph{ordered trees} as in the work of the second author together with
Kwon and Oh \cite{KOY}.  See also the paper by Christ
\cite{ChristPowerSeries} in which a precursor notion was used.

The key point to be made at this stage is that we manage to show that,
for \emph{fixed} $N$,
\begin{equation}
  \label{rmdtozero}
  \|\Tcal_{\Tcal}^{(J+1)}(v)\|_{H^{s-1}(\R)} \to 0 \,, 
\end{equation}
as $J\nearrow \infty$.  While we do not have control of the remainder
term in the $H^{s}(\R)$-norm, the remainder term vanishes in the limit
in a \emph{weaker} topology than the strong $H^s(\R)$-topology (see
Subsection~\ref{subsect:4p1}).  This fact together with
$H^{s-1}(\R)$-estimates similar to \eqref{rmdtozero} (see
Section~\ref{sect:justif}) allow us to prove that any solution
$v\in C(I;H^s(\R))$ to \eqref{eq:interaction} necessarily satisfies
(in $H^s(\R)$) the normal form equation:
\begin{gather}
  \begin{split}
    v(t) = &\, v_0 + \int_0^t \Qcal(v)(t')dt' + \sum_{j=2}^{\infty}
    \bigg(\Tcal_0^{(j)}(v)(t) - 
    \Tcal_0^{(j)}(v)(0) \bigg)
    + \sum_{j=2}^{\infty} \int_0^t \Tcal^{(j)}_{\Qcal}(v)(t') dt' \\
    &\,\,+ \sum_{j=1}^{\infty} \int_0^t \Tcal^{(j)}_{\Tcal,1}(v)(t')
    dt' \,,
    \label{DuhFinfty}
  \end{split}
\end{gather}
for all $t\in I$.

The analysis for the equation \eqref{DuhFinfty} is simple: we apply a
fixed point argument directly in the $C(I;H^s(\R))$-norm, without
relying on extra harmonic analytic tools.  Indeed, we can write all
the nonlinear terms in \eqref{DuhFinfty} as iterated applications of a
single trilinear form \eqref{locmodNcal}.  
Once we have the $H^s(\R)$-estimate for this
simple trilinear form (Lemma~\ref{locmodest}), we obtain control of
all the terms in \eqref{DuhFinfty} (see
Section~\ref{Sect:strongests}).  This is a very efficient method to
deal with the infinite series of nonlinearities and it was applied
before in \cite{KishimotoProc,KishimotoDNLS,KOY}.  Showing
\eqref{rmdtozero} also relies on this idea; however, for this purpose
one needs two ``building blocks'', namely the $H^{s-1}(\R)$-estimates
of $\dt v$ for $v\in C(I; H^s(\R))$ solution to \eqref{eq:interaction}
and of a second trilinear form (in an ``away from resonant''
integration region).  See Corollary~\ref{lem:dtvest} and
Lemma~\ref{locmodNcalw}.
For an exposition of this idea we refer the reader to the report paper
by Kishimoto \cite{KishimotoProc} (in particular, see the meta-theorem
\cite[Theorem~1]{KishimotoProc}).

In summary, the method applied in this work is antipodal to that of
the Fourier restriction norm method (as applied by Takaoka
\cite{Tak99} for DNLS): we first derive a \emph{complicated Duhamel
  formula}, that is the normal form equation \eqref{DuhFinfty}, after
which the analytical part is simple.  In contrast, one needs a more
involved analysis when using the $X^{s,b}$-norms (i.e. the Fourier
restriction norm method) given by
\begin{equation}
  \label{defn:Xsb}
  \|w\|_{X^{s,b}}= \big\| \jb{\dx}^s \jb{\dt}^b e^{-it\partial_x^2} w(t) \big\|_{L_{t,x}^2(\R^2)} 
  \quad (s,b\in \R)
\end{equation}
on the simple Duhamel formula of \eqref{dtweq}.  For a similarity,
notice that the interaction representation of $w(t)$ also plays a role
in the Fourier restriction norm method.  In the ``denominator games''
specific to the Fourier restriction norm method, one essentially
overcomes the derivative loss issue with a denominator
$|\Phi(\bxi)|^b$ with $b\approx \frac12$.  In the method employed
here, due to the integration by parts (see e.g. \eqref{eq:step1nfr}),
we benefit from a full power $|\Phi(\bxi)|$.

Finally, we emphasize that the proviso for the scheme of infinite
iterations of normal form reductions to work is showing that the
remainder term vanishes in the limit.  In some sense, this represents
the heavier analytical part of this method, namely identifying some
weaker norm than the $C(I; H^s(\R))$-norm in which one can get
\eqref{rmdtozero}.

\subsection{Comments and remarks}

For DNLS on the real line, Yin Yin Su Win \cite{Win08} established its
unconditional well-posedness in the energy space, i.e., for $s=1$.
Indeed, by modifying the $X^{s,b}$-multilinear estimates in
\cite{Tak99}, the author of \cite{Win08} showed the uniqueness of
solutions to DNLS in $X^{\frac{1}{2},\frac{1}{2}}_T$ (here,
$X^{s,b}_T$ simply denotes a local in time version of $X^{s,b}$
defined via \eqref{defn:Xsb}).  Now, uniqueness of solutions in
$X^{\frac{1}{2},\frac{1}{2}}_T$ implies unconditional uniqueness of
solutions to DNLS in $H^1(\R)$.  Indeed, this follows from arguing by
interpolation (of $X^{s,b}$-spaces): first, if
$u\in C([-T,T];H^1(\R))$, then clearly
$u\in X_T^{1,0} = L^2([-T,T];H^1(\R))$; second, by the algebra
property of $C([-T,T];H^1(\R))$ we have
$\dx(|u|^2 u) \in C([-T,T]; L^2(\R)) \subset L^2([-T,T]; L^2(\R))$ and
thus $u=(i\dt + \dx^2)^{-1} \big(i\dx(|u|^2u)\big) \in X^{0,1}_T$;
third, by interpolation, any solution $u\in C([-T,T];H^1(\R))$ to
\eqref{DNLS} is contained in $X_T^{\frac{1}{2},\frac{1}{2}}$ and thus
it must be unique.  This strategy does not work for $s<1$ because the
key trilinear estimate is known to fail in $X^{s,b}$ with $s<\frac12$,
for any $b\in\R$ (see \cite[Proposition~3.3]{Tak99}).

For DNLS on the torus, Kishimoto \cite{KishimotoDNLS} proved its
unconditional well-posedness in $H^s(\T)$, for $s>\frac12$.  In
addition to \cite{KOY}, our implementation of the infinite iteration
of normal form reductions to prove Theorem~\ref{mainthm} follow ideas
presented in \cite{KishimotoProc,KishimotoDNLS}, specifically in
making use of the trilinear forms $\Tcal_{\Phi}$ and
$\Tcal^{\textup{w}}_{|\Phi|>M}$ in Sections~\ref{Sect:strongests} and
\ref{sect:weakests}.  In contrast, in \cite{KOY} (handling the cubic
NLS and mKdV equations on the real line in Sobolev spaces) and in
\cite{ForlanoOh} (handling the cubic NLS in almost critical spaces),
the approach is to prove ``strong and weak localized modulation
estimates'' (SLME and WLME)
and then use more intricate thresholds to separate the almost resonant
and away from resonant integration regions at each iteration.
Although we can still prove a useful SLME for DNLS in order to
establish the $H^s(\R)$-estimates for all nonlinearities in a normal form equation derived from DNLS, 
there seems to be no useful corresponding WLME. 

Finally, we include here a corollary to Theorem~\ref{mainthm}
regarding the global well-posedness of DNLS.  We recall that
Colliander, Keel, Staffilani, Takaoka, and Tao
\cite{CKSTT,CKSTTrefined} introduced the $I$-method and showed that it
is in fact globally well-posed, provided that $s>\frac12$ and
$\|u_0\|_{L^2}^2<2\pi$.  Miao, Wu, and Xu \cite{MiaoWuXu} reached the
end-point regularity $s=\frac12$, under the same condition on the
$L^2$-norm of the initial data.  The $L^2$-norm threshold on initial
data was improved\footnote{Prior to \cite{GuoWu}, in \cite{Wu2013,WuAPDE2},  Wu first
obtained $L^2$-norm threshold improvements for energy-space initial
data.} to $\|u_0\|_{L^2}^2<4\pi$ by Guo and Wu \cite{GuoWu}
who showed global well-posedness of DNLS in $H^s(\R)$,
$s\geq \frac12$. 

Taking into account Theorem~\ref{mainthm} and the result of
\cite{GuoWu}, we obtain the following:

\begin{corollary}
  \label{cor}
  Let $s>\frac12$, $u_0\in H^s(\R)$ with $\|u_0\|^2_{L^2}<4\pi$.
  Then, DNLS is unconditionally globally well-posed in $H^s(\R)$.
\end{corollary}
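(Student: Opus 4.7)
The plan is to combine the global existence theory of \cite{GuoWu} with the local unconditional uniqueness provided by Theorem~\ref{mainthm} via a standard continuation argument. By the result of Guo and Wu, under the hypotheses $s \geq \tfrac{1}{2}$ and $\|u_0\|_{L^2}^2 < 4\pi$, DNLS admits a global-in-time solution $u \in C(\R; H^s(\R))$ (constructed in an auxiliary space). The $L^2$-mass conservation law for DNLS ensures $\|u(t)\|_{L^2}^2 = \|u_0\|_{L^2}^2 < 4\pi$ for all $t\in\R$, so global existence persists after restarting from any later time.

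The remaining task is therefore to upgrade uniqueness from the conditional setting in \cite{GuoWu} to the unconditional one. Let $\tilde u \in C(I; H^s(\R))$ be any solution to \eqref{DNLS} with $\tilde u(0)=u_0$, where $I\ni 0$ is a closed interval. I would consider the set
\[
  S := \{t\in I : \tilde u(t) = u(t)\}.
\]
Then $S$ is non-empty since $0\in S$, and closed in $I$ by continuity of $\tilde u - u$ in $H^s(\R)$. To show $S$ is open, fix $t_0\in S$ and apply Theorem~\ref{mainthm} to the initial-value problem for DNLS with datum $u(t_0)=\tilde u(t_0)\in H^s(\R)$: unconditional local well-posedness yields a neighborhood $J$ of $t_0$ on which the solution in $C(J; H^s(\R))$ is unique. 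Both $u$ and $\tilde u$ restricted to $J$ satisfy DNLS with data $u(t_0)$, so they must coincide on $J$, whence $J\cap I \subset S$. Connectedness of $I$ then forces $S=I$, giving $\tilde u \equiv u$ on $I$.

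The main (only) subtlety is ensuring that this continuation genuinely exhausts the entire real line: at each $t_0$, the local time of existence in Theorem~\ref{mainthm} depends on $\|u(t_0)\|_{H^s(\R)}$. For Guo--Wu's global solution, this norm is finite for every $t_0\in\R$ but may grow in time; nonetheless, on any bounded interval $[-T,T]$ the norm $\|u(t)\|_{H^s(\R)}$ is bounded, which allows $[-T,T]$ to be covered by finitely many local uniqueness windows of uniform length. This is enough to run the continuity argument above on any bounded $I$, and hence on all of $\R$. Combining this unconditional uniqueness with the already-established global existence yields the claimed unconditional global well-posedness in $H^s(\R)$ for $s>\tfrac{1}{2}$ and $\|u_0\|_{L^2}^2<4\pi$.
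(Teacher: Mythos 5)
Your argument is correct and is essentially the paper's own (the paper offers no separate proof, simply asserting that the corollary follows by combining Theorem~\ref{mainthm} with the global well-posedness of \cite{GuoWu}); your open-closed continuation argument, using time-translation symmetry and boundedness of $\|u(t)\|_{H^s}$ on compact intervals, is just the standard way of spelling out that combination.
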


Although we do not pursue the question of global well-posedness of DNLS in this paper, 
we would like to point out that above the mass threshold $4\pi$, the
question of whether all solutions to \eqref{DNLS} extend globally in time is
not settled for low-regularity initial data. 
We mention here two recent papers that are relevant to this question. 
First, for $H^1(\R)$-initial data,
by using variational analysis of soliton solutions, Fukaya, Hayashi,
and Inui \cite{FHI} gave a sufficient condition for the global
well-posedness of \eqref{DNLS} covering the result of Wu \cite{WuAPDE2}. 
Second, by using the inverse
 scattering method, Jenkins, Liu, Perry, and Sulem \cite{JLPS} (see also references therein) 
 proved 
 that all solutions started with initial data in the weighted Sobolev
 space $H^{2,2}(\R)$ with the norm
$\|u\|_{H^{2,2}(\R)}=\Big(\|\jb{\,\cdot\,}^2u(\,\cdot\,)\|_{L^2(\R)}^2+\|u''\|_{L^2(\R)}^2\Big)^{1/2}$
 exist for all times.

\subsection{Organization of the paper} 
In Section~\ref{sect2}, we perform normal form reductions and
transform the (gauged) DNLS equation into an equation which is more
complicated algebraically, but simpler analytically.  The proofs of
the crucial estimates are given in Sections 3 and 4.  In
Section~\ref{sect:justif}, we rigorously justify the various
operations from Section~\ref{sect2} for rough solutions to DNLS.
Finally, in Section~\ref{sect:pfmainthm} we put the pieces together
and give the proof of Theorem~\ref{mainthm}.

\subsection{Notation}
We use $A\lesssim B$ to denote the estimate that $A\leq CB$ for some
constant $C$ which may vary from line to line and depend on various
parameters.  We use $A\sim B$ to denote the statement that
$A\lesssim B\lesssim A$. We also use $A\ll B$ if $A\leq \epsilon B$,
where $\epsilon$ is a small absolute constant.  For an integrable
function $f(x)$ with $x\in \R$, we use the Fourier transform
convention
\[\Fcal(f)(\xi)=\widehat{f}(\xi)\colonequals \int_\R f(x)e^{-i x \xi}
  \,dx.\] \noi We denote $S(t)=e^{it\partial_x^2}$ the linear
propagator for the linear Schr\"odinger equation
$\partial_t u=i\partial_x^2 u$.

We include in Appendix~\ref{appdxA} the notion of ordered trees and
related terminology as introduced in \cite{KOY} in order to make our
paper self-contained.

\section{The normal form equation}
\label{sect2}

In this section, we \emph{formally} derive a {normal form equation} for a so-called gauged DNLS equation.  
First, we use a gauge transformation to remove the nonlinear term $2i |u|^2\partial_x u$ 
from the right-hand side of \eqref{DNLS} 
at the expense of introducing a (pure power) quintic nonlinear term 
-- see \eqref{gDNLS} below. 
Then, we apply an infinite iteration of normal form reductions 
to transform the gauged DNLS into a new equation 
involving infinite series of nonlinearities of arbitrarily high degrees. 
To this end, we employ the machinery developed in \cite{KOY}.

We use  the following gauge transformation 
\begin{equation}
\label{gaugetr}
u(t,x)\mapsto w(t,x):=\exp\left(-i\int_{-\infty}^x |u(t,y)|^2dy\right) u(t,x) .
\end{equation}
Notice that this is an autonomous transformation, i.e. it does not depend explicitly on the time variable. 
Thus, equation \eqref{DNLS} is transformed into 
the 
\emph{gauged DNLS}: 
\begin{equation}
\label{gDNLS}
i \partial_t w+ \partial_x^2 w = -iw^2\partial_x \overline{w} - \frac12 |w|^4w .
\end{equation}
This nonlinear transformation \eqref{gaugetr} goes back 
to the works of Hayashi \cite{Hayashi93} and 
Hayashi and Ozawa \cite{HaOz92}. See also \cite{Lee89}. 
It is well known by now (see \cite{Tak99}) 
that the cubic nonlinearity with the derivative falling on  the complex-conjugate  factor can be handled using the 
Fourier restriction norm method, 
whereas the cubic term $|u|^2\partial_x u$ fails to have a useful estimate. 
It turns out that this is also the case when employing the normal form approach, 
namely we have to  remove  the bad nonlinearity before renormalizing the equation 
-- see also Remark~\ref{rmk:dxonconjug}. 
We can transfer a well-posedness result on the gauged DNLS equation back to the original DNLS equation 
with the following: 

\begin{lemma}[{\cite{CKSTTrefined}}]
\label{gaugetrLip}
Let $s\ge 0$. 
The mapping $u\mapsto w$ defined by \eqref{gaugetr} 
is bi-Lipschitz on $H^s(\R)$.  
\end{lemma}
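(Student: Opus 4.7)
Our plan is to reduce the bi-Lipschitz property to a single forward Lipschitz bound by exploiting the algebraic structure of \eqref{gaugetr}. Setting $G[u](x) := \exp\bigl(-i \int_{-\infty}^x |u(y)|^2 \, dy\bigr)$, we have $w = G[u]\cdot u$. Since $|G[u]| \equiv 1$, it follows that $|w| = |u|$ pointwise, so $\int_{-\infty}^x |w|^2 \, dy = \int_{-\infty}^x |u|^2 \, dy$, and hence the inverse map takes the same form, namely $u = \overline{G[w]} \cdot w$. Therefore, once the forward Lipschitz bound
\[
\|w_1 - w_2\|_{H^s(\R)} \leq C\bigl(\|u_1\|_{H^s}, \|u_2\|_{H^s}\bigr) \|u_1 - u_2\|_{H^s(\R)}
\]
is established, the reverse inequality follows by running the same argument with $w_1, w_2$ in place of $u_1, u_2$, together with the (quantitative) forward boundedness $\|w\|_{H^s} \leq C(\|u\|_{H^s})$.

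To prove the forward bound, we shall use the decomposition
\[
w_1 - w_2 = G[u_1](u_1 - u_2) + \bigl(G[u_1] - G[u_2]\bigr) u_2,
\]
together with the telescoping identity
\[
G[u_1] - G[u_2] = -i\bigl(F[u_1] - F[u_2]\bigr) \int_0^1 \exp\bigl(-i\theta F[u_1] - i(1-\theta) F[u_2]\bigr) d\theta,
\]
where $F[u](x) := \int_{-\infty}^x |u(y)|^2 dy$ is real-valued and bounded by $\|u\|_{L^2}^2$. The $s = 0$ case is immediate from $|G[u_j]|\equiv 1$ and the elementary bound
\[
\|F[u_1] - F[u_2]\|_{L^\infty} \leq \bigl\||u_1|^2 - |u_2|^2\bigr\|_{L^1} \lesssim \bigl(\|u_1\|_{L^2} + \|u_2\|_{L^2}\bigr)\|u_1-u_2\|_{L^2}.
\]
For integer $s\geq 1$, an iterated product rule combined with $\partial_x G[u] = -i|u|^2 G[u]$ and the algebra property of $H^s(\R)$ for $s > \tfrac12$ reduces everything to a polynomial expression in the $H^s$-norms of $u_1, u_2$.

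For the delicate range $0 < s < 1$ we apply the fractional Leibniz (Kato-Ponce) inequality
\[
\bigl\| \jb{D}^s(G \cdot h) \bigr\|_{L^2} \lesssim \|G\|_{L^\infty} \|h\|_{H^s} + \|\jb{D}^s G\|_{L^p} \|h\|_{L^q},
\]
with $\tfrac{1}{p} + \tfrac{1}{q} = \tfrac{1}{2}$, to each piece of the decomposition. The factor $\|\jb{D}^s G[u]\|_{L^p}$ is controlled by writing $\jb{D}^{s-1} \partial_x G[u] = \jb{D}^{s-1}\bigl(|u|^2 G[u]\bigr)$ and invoking the $L^p$-mapping properties of $\jb{D}^{s-1}$ together with the fractional chain rule applied to $G[u] = e^{-iF[u]}$; the difference $G[u_1] - G[u_2]$ is handled analogously after inserting the telescoping identity and using $|u_1|^2 - |u_2|^2 = \mathrm{Re}\bigl((u_1+u_2)\overline{(u_1-u_2)}\bigr)$. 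The Sobolev embedding $H^s(\R) \hookrightarrow L^{2/(1-2s)}(\R)$ ensures the auxiliary $L^q$-factor is bounded by the appropriate $H^s$-norm. The main obstacle we anticipate is the low-regularity endpoint $0 < s \leq \tfrac{1}{2}$, where $H^s(\R)$ is not an algebra and $u$ need not lie in $L^\infty$: the bookkeeping of auxiliary Lebesgue exponents in the Kato-Ponce estimate and the fractional chain rule must be carried out carefully to secure a polynomial-in-norm constant, following the strategy of \cite{CKSTTrefined}.
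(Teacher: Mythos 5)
First, note that the paper does not actually prove this lemma: it is quoted from \cite{CKSTTrefined}, so there is no internal argument to compare yours with. Your overall strategy is the standard one underlying that reference and is sound in outline: since $|G[u]|\equiv 1$ forces $|w|=|u|$ pointwise, the inverse map has the same form with the conjugate exponential, so local bi-Lipschitz continuity reduces to the forward Lipschitz bound on bounded sets; the $L^2$ case via the telescoping identity and the integer cases via the product rule, $\partial_x G[u]=-i|u|^2G[u]$, and the algebra property are fine.

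There is, however, a concrete flaw in the fractional step $0<s<1$ as written. The quantity $\|\jb{D}^s G[u]\|_{L^p}$ with $p<\infty$ is infinite: $G[u]$ is unimodular and tends to (in general different) constants as $x\to\pm\infty$, so it lies in no $L^p$ with $p<\infty$, and the inhomogeneous operator $\jb{D}^s$ does not remove this non-decaying zero-frequency part; hence the right-hand side of your Kato--Ponce estimate is vacuous. Relatedly, the identity you implicitly use, $\jb{D}^s G=\jb{D}^{s-1}\partial_x G$, is false (it is $\jb{D}^s G=\jb{D}^{s-1}\jb{D}G$, and $\jb{D}\neq\partial_x$). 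The standard repair: estimate $\|G h\|_{H^s}$ by $\|Gh\|_{L^2}+\||D|^s(Gh)\|_{L^2}$ and use the homogeneous fractional Leibniz rule, so that only $\||D|^s G\|_{L^p}$ appears; then $|D|^s G=|D|^{-(1-s)}|D|G$ with $|D|=\mathcal{H}\partial_x$ ($\mathcal{H}$ the Hilbert transform, bounded on $L^r$, $1<r<\infty$), $\partial_x G=-i|u|^2G$, and the fractional integration (Hardy--Littlewood--Sobolev) bound $\||D|^{-(1-s)}f\|_{L^p}\lesssim\|f\|_{L^r}$ with $\tfrac1p=\tfrac1r-(1-s)$, $1<r<\tfrac1{1-s}$, which is compatible with $u\in H^s\hookrightarrow L^{2r}$ and with the pairing $\tfrac1p+\tfrac1q=\tfrac12$, $H^s\hookrightarrow L^q$; alternatively one can use a paraproduct decomposition in which the low frequencies of $G$ are only measured in $L^\infty$. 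The same correction must be applied to the difference term built from the telescoping identity, where the factor $F[u_1]-F[u_2]$ is likewise bounded but non-decaying. Finally, your case analysis covers $s=0$, integer $s\ge1$, and $0<s<1$ only; non-integer $s>1$ should be included (combine the product rule for the integer part with the corrected fractional argument, or interpolate), and the range $0<s\le\tfrac12$ that you defer is exactly where this exponent bookkeeping must be carried out in full -- though for the application in this paper only $s>\tfrac12$ is used, where $H^s(\R)$ is an algebra and the estimates simplify considerably.
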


Next, we denote $S(t):=e^{it\partial_x^2}$ 
and we use the change of variable 
$v(t)=S(-t)w(t)$ (the interaction representation variable). 
Then, the equation \eqref{gDNLS} becomes 
\begin{equation}
\label{irvgDNLS}
\partial_t v = \mathcal{Q}(v) + \mathcal{T}(v),
\end{equation}
where we denoted the quintic and the cubic nonlinear terms respectively by: 
\begin{align} 
\label{Qcal}
\mathcal{Q}(v) &:=-\frac12 \big|S(t)v(t)\big|^4 S(t)v(t),\\
\label{Tcal}
\mathcal{T}(v) &:=-i \big(S(t)v(t)\big)^2 \partial_x \overline{S(t)v(t)}.
\end{align}
In what follows we exploit the oscillatory nature of the Fourier transform of $\mathcal{T}(v)$.

With a slight abuse of notation\footnote{Note that when all the entries of the trilinear operator are the same, we write $\mathcal{T}(v)$ instead of $\mathcal{T}(v,v,v)$.}, 
let us introduce the trilinear operator $\mathcal{T}$ defined by 
\begin{align}
\label{defn:Tcal}
\mathcal{F}\Big[\mathcal{T}(v_1,v_2,v_3)\Big](t,\xi) 
 &=  \int_{\xi=\xi_1-\xi_2+\xi_3}  e^{i\Phi(\bxi) t} \xi_2 
\widehat{v_1}(\xi_1) \overline{\widehat{v_2}(\xi_2)} \widehat{v_3}(\xi_3) d\xi_1d\xi_2 \,,
\end{align}
where the phase is given by
\begin{equation}
\label{defn:Phi}
\Phi(\bar{\xi}) := \xi^2-\xi_1^2 +\xi_2^2-\xi_3^2 .
\end{equation}
Notice that on the convolution hyperplane $\xi=\xi_1-\xi_2+\xi_3$, we have 
\begin{equation*}
\Phi(\bar{\xi}) = 2(\xi-\xi_1)(\xi-\xi_3) = 2(\xi_2-\xi_1)(\xi_2-\xi_3) . 
\end{equation*}
Since it is determined by the linear part of the equation, 
the function $\Phi(\bar{\xi})$ is the same as 
the \emph{modulation function} for the cubic NLS equation in \cite{KOY}, 
but the trilinear operator is different due to the presence of the derivative in the cubic nonlinearity.

Since for $s>\frac12$, $H^{s}(\R)$ is a Banach algebra, 
the quintic term can be estimated easily: 
\begin{equation}
\label{NfrakHsest}
\|\mathcal{Q}(v)\|_{H^s(\R)} \lesssim  \|v\|_{H^s(\R)}^5.
\end{equation}
Due to the derivative loss in the cubic term, 
$\mathcal{T}$ does not have a similar estimate in $H^s(\R)$, even though $s>\frac12$.  
Therefore we proceed to renormalize this nonlinearity 
by means of normal form reductions (NFR). 

\begin{remark}
Throughout this paper, when the complex conjugate sign on $v(\xi)$ does not play any significant role in the analysis, we drop the complex conjugate sign.
Also, we often drop the complex number $i$ and use $1$ for $\pm 1$ and $\pm i$.
\end{remark}

\subsection{The first step of NFR} 
The idea is to exploit the oscillatory factor of the convolution integral in \eqref{defn:Tcal}, 
and so we apply integration by parts on a domain of integration where $|\Phi(\bar{\xi})|>N$, 
for some threshold $N>1$ to be chosen later.  
We first decompose 
\begin{equation}
\label{firstdecomp}
\mathcal{T}(v) = \mathcal{T}_1(v) + \mathcal{T}_2(v),
\end{equation}
where $\mathcal{T}_2(v)$ is defined as $\mathcal{T}(v)$ (see \eqref{defn:Tcal} above),  
but the integration is further restricted to the domain 
\begin{equation*}
C_0 =C_0(\xi):= \Big\{(\xi_1,\xi_2,\xi_3)\in \R^3 : \xi=\xi_1-\xi_2+\xi_3,\ |\Phi(\bxi)| > N\Big\}
\end{equation*}
embedded in the convolution hyperplane $\xi=\xi_1-\xi_2+\xi_3$
and let $\mathcal{T}_1(v):= \mathcal{T}(v) - \mathcal{T}_2(v)$. 
Thanks to the modulation restriction, the term $ \mathcal{T}_1(v)$ enjoys 
a sufficiently good $H^s(\R)$-estimate -- see Lemma~\ref{locmodest} below. 
For the remainder term $\mathcal{T}_2(v)$, 
we apply differentiation by parts\footnote{Here, ``differentiation by parts'' 
means usual integration by parts (with respect to the time variable) 
in the Duhamel formulation of \eqref{irvgDNLS}, without writing explicitly the time integration. 
In other words,
$$ \Tcal_2(v)(t,\xi) = \partial_t \Big[ \Tcal_0^{(2)}(v)(t',\xi)\Big]  +  \Tcal^{(2)}(v)(t,\xi)$$
stands for 
$$ \int_0^t
\Tcal_2(v)(t',\xi)dt'  = \Big[ \Tcal_0^{(2)}(v)(t',\xi)\Big]_{t'=0}^{t'=t}  +  \int_0^t \Tcal^{(2)}(v)(t',\xi) dt' .
$$
} 
in order to renormalize it. 
To ease the writing,  
we drop the complex conjugate, the Fourier transform notation, 
and the complex constants of modulus one in front of the nonlinearities. 
We have: 
\begin{align}
\nonumber
\Tcal_2(v)(t,\xi) &=  \partial_t \Bigg[
\int_{\substack{\xi=\xi_1-\xi_2+\xi_3\\ |\Phi(\bar{\xi})|>N}} 
 \frac{e^{i\Phi(\bar{\xi}) t } \xi_2}{\Phi(\bar{\xi})}   v(t,\xi_1)v(t,\xi_2)v(t,\xi_3) d\xi_1d\xi_2  \Bigg]\\
\nonumber
 &\qquad\qquad - 
 \int_{\substack{\xi=\xi_1-\xi_2+\xi_3\\ |\Phi(\bar{\xi})|>N}} 
\frac{e^{i\Phi(\bar{\xi}) t}\xi_2 }{\Phi(\bar{\xi})} 
  \partial_t \big(v(t,\xi_1)v(t,\xi_2)v(t,\xi_3) \big) d\xi_1d\xi_2 \\
\nonumber
  &=: \partial_t \Big[ \Tcal_0^{(2)}(v)(t,\xi)\Big]  +  \Tcal^{(2)}(v)(t,\xi).
\end{align}

\noi
Let us start employing the ordered tree notation from Appendix~\ref{appdxA}. 
At this stage, we can express everything in terms of $\TT_1$, the sole ternary tree of the first generation. 
With $\mu_1:= \Phi(\bar\xi)$, 
the nonlinearities $\Tcal_0^{(2)}(v)$,  $\Tcal^{(2)}(v)$ can be written as follows: 
\begin{align}
\label{Tcal02}
\Tcal_0^{(2)}(v)(t,\xi) &= \int_{\bmxi\in\Xi_{\xi}(\TT_1)}
 \ind_{C_0}\frac{e^{i\mu_1 t}\xi^{(1)}_2}{\mu_1}  \prod_{a\in\TT_1^{\infty}} v(t,\xi_a)\,,\\
\label{Tcal2}
\Tcal^{(2)}(v)(t, \xi) &= 
\int_{\bmxi\in\Xi_{\xi}(\TT_1)}
 \ind_{C_0}\frac{e^{i \mu_1 t}\xi^{(1)}_2}{\mu_1} 
  \partial_t \bigg( \prod_{a\in\TT_1^{\infty}} v(t,\xi_a) \bigg)\,.
 \end{align}
By using the product rule and supposing $v$ is a smooth solution of \eqref{irvgDNLS}, 
we get 
\begin{equation*}
\Tcal^{(2)}(v)= \Tcal^{(2)}_{\Qcal}(v) + \Tcal_{\Tcal}^{(2)}(v)\,. 
\end{equation*}
On the right side above, 
$\Tcal^{(2)}_{\Qcal}(v)$ is the sum of three septic terms, 
corresponding to replacing $\partial_t v(t,\xi_b)$ by $\Qcal(v)(t,\xi_b)$, $b\in \TT_1^{\infty}$. 
Similarly, 
$\Tcal^{(2)}_{\Tcal}(v)$  is the sum of three quintic terms, 
corresponding to replacing $\partial_t v(t,\xi_b)$ by $\Tcal(v)(t,\xi_b)$, $b\in \TT_1^{\infty}$. 
More precisely, we have 

 \begin{align}
 \label{TcalQ2}
\Tcal_{\Qcal}^{(2)}(v)(\xi) &:=  \sum_{b\in\TT_1^{\infty}}
 \int_{\bmxi\in\Xi_{\xi}(\TT_1)}
 \ind_{C_0^c}\frac{e^{it\mu_1}\xi^{(1)}_2 }{\mu_1} 
   \Qcal(v)(\xi_b) \prod_{a\in\TT_1^{\infty}\setminus\{b\}} v(\xi_a)\\
\label{TcalT2}
\Tcal_{\Tcal}^{(2)}(v)(\xi) &:= \sum_{b\in\TT_1^{\infty}}
 \int_{\bmxi\in\Xi_{\xi}(\TT_1)}
 \ind_{C_0^c}\frac{e^{it\mu_1}\xi^{(1)}_2 }{\mu_1} 
   \Tcal(v)(\xi_b) \prod_{a\in\TT_1^{\infty}\setminus\{b\}} v(\xi_a)
\end{align}

Thus, if $v$ is a smooth solution of \eqref{irvgDNLS}, then it is also a solution of 
\begin{equation}
\label{NF1gDNLS}
\partial_t v = \Qcal(v) + \partial_t \Tcal_0^{(2)}(v) + \Tcal^{(1)}_{\Tcal,1}(v) 
+ \Tcal_{\Qcal}^{(2)}(v) +  \Tcal_{\Tcal}^{(2)}(v),
\end{equation}
where we set $\Tcal^{(1)}_{\Tcal,1}(v) := \Tcal_1(v)$ 
for the sake of consistency with subsequent NFR steps. 
It turns out that we can establish sufficiently good estimates for all of the nonlinear terms of \eqref{NF1gDNLS}, 
except for those in $\Tcal_{\Tcal}^{(2)}(v)$. Therefore, we proceed to renormalize them.

\subsection{The second step of NFR}

For the sake of clarity, 
let us write $\Tcal_{\Tcal}^{(2)}(v)$ defined in \eqref{TcalT2} first without appealing to the terminology of Appendix~\ref{appdxA}, 
and then in the compact writing facilitated by the ordered trees notation: 
\begin{align}
\nonumber
\Tcal_{\Tcal}^{(2)}(v)(\xi) = &
\int_{\substack{\xi=\xi_1-\xi_2+\xi_3\\ \xi_1=\xi_{11}-\xi_{12}+\xi_{13}}} 
\ind_{C_0^c} \frac{e^{i\Phi(\bar\xi)t} \xi_2}{\Phi(\bar\xi)} \Big(e^{i\Phi(\overline{\xi_1})t}\xi_{12} \Big)
 v(\xi_{11}) v(\xi_{12}) v(\xi_{13}) v(\xi_2) v(\xi_3)\\
 \nonumber
 & + 
 \int_{\substack{\xi=\xi_1-\xi_2+\xi_3\\ \xi_2=\xi_{21}-\xi_{22}+\xi_{23}}} 
\ind_{C_0^c} \frac{e^{i\Phi(\bar\xi)t} \xi_2}{\Phi(\bar\xi)} \Big(e^{i\Phi(\overline{\xi_2})t}\xi_{22} \Big)
 v(\xi_1)v(\xi_{21}) v(\xi_{22}) v(\xi_{23}) v(\xi_3)\\
 \nonumber
  & + 
 \int_{\substack{\xi=\xi_1-\xi_2+\xi_3\\ \xi_3=\xi_{31}-\xi_{32}+\xi_{33}}} 
\ind_{C_0^c} \frac{e^{i\Phi(\bar\xi)t} \xi_2}{\Phi(\bar\xi)} \Big(e^{i\Phi(\overline{\xi_3})t}\xi_{32} \Big)
 v(\xi_1)v(\xi_2) v(\xi_{31}) v(\xi_{32}) v(\xi_{33})\\
 \label{TcalT2cpct}
 =& 
\sum_{\TT \in\Tfrak(2)} \int_{\bmxi\in\Xi_{\xi}(\TT)}
 \ind_{C_0^c}\frac{e^{i\mu_1 t} \xi^{(1)}_2}{\mu_1} 
 \Big( e^{i\mu_2t} \xi^{(2)}_2 \Big)\prod_{a\in\TT^{\infty}} v(\xi_a),
\end{align}
where $\Phi(\overline{\xi_j})=\Phi(\xi_j,\xi_{j1},\xi_{j2},\xi_{j3})$ for $1\leq j\leq 3$.
 Notice that, in \eqref{TcalT2cpct}, the phase is $\mu_1+\mu_2$, where  
  $\mu_1$ is the same as in the first step of NFR, i.e. $\mu_1=\Phi(\bxi)$, and 
$$\mu_2:=
\Phi(\overline{\xi^{(2)}})=2( \xi_2^{(2)}-\xi_1^{(2)})(\xi_2^{(2)}- \xi_3^{(2)})\,,$$
for $\bmxi\in\Xi_{\xi}(\TT)$. 
We now decompose 
\begin{equation*}
\Tcal_{\Tcal}^{(2)}(v) = \Tcal_{\Tcal,1}^{(2)}(v) + \Tcal_{\Tcal,2}^{(2)}(v)\,,
\end{equation*}
i.e. each term of the sum in  \eqref{TcalT2cpct} 
is split into two parts corresponding to further restricting the domain of integration to 
\begin{equation*}
C_1=C_1(\xi; \TT):= \big\{ \bmxi\in \Xi_{\xi}(\TT) : |\mu_1+\mu_2|\le \beta_1 |\mu_1|  \big\}
\end{equation*}
and its complement, respectively, where $\beta_1\ge 2$ is to be chosen later.  
By Lemma~\ref{lem:TcalTcal1Jp1} below, 
we have $H^s(\R)$-estimates for the terms in $\Tcal_{\Tcal,1}^{(2)}(v)$. 
For the remainder $\Tcal_{\Tcal,2}^{(2)}(v)$,
we apply differentiation by parts for all of its three terms. 
Thus 
by working with the ordered trees notation, 
we have\footnote{Given 
an ordered tree $\TT_2$ with $\TT_1$ denoting its first generation tree, 
for $A_1\subseteq \Xi(\TT_1)$, $A_2\subset \Xi(\TT_2)$, we define by a slight abuse of notation, 
$A_1\cap A_2:=\{\bmxi\in A_2 : \bmxi|_{\TT_1}\in A_1\}$. 
Inductively, 
this definition is generalized to higher generation ordered trees as follows: 
if $\TT_{J+1}$ is an ordered tree with chronicle $\{\TT_j\}_{j=1}^{J+1}$ 
and $A_j\subseteq \Xi(\TT_j)$, $j=1,2,\ldots, J+1$, 
then $A_1\cap A_2\cap\ldots\cap A_{J+1} := 
\{\xi\in \Xi(\TT_{J+1}) : \xi|_{\TT_{J}}\in A_1\cap A_2\cap\ldots\cap A_J \}$.}
\begin{align}
\nonumber
\Tcal_{\Tcal,2}^{(2)}(v)(t,\xi) &=  \partial_t \Bigg[
\sum_{\TT\in\Tfrak(2)} \int_{\bmxi\in\Xi_{\xi}(\TT)}
 \ind_{C_0\cap C_1^c}\frac{e^{i(\mu_1+\mu_2)t}}{\mu_1(\mu_1+\mu_2)} 
 \xi^{(1)}_2\xi^{(2)}_2 \prod_{a\in\TT^{\infty}} v(\xi_a) 
 \Bigg]\\
\nonumber
 &\qquad\qquad - 
\sum_{\TT\in\Tfrak(2)} \int_{\bmxi\in\Xi_{\xi}(\TT)}
 \ind_{C_0\cap C_1^c}\frac{e^{i(\mu_1+\mu_2)t}}{\mu_1(\mu_1+\mu_2)} 
 \xi^{(1)}_2\xi^{(2)}_2 \partial_t\bigg(\prod_{a\in\TT^{\infty}} v(\xi_a) \bigg)
 \\
\nonumber
  &=: \partial_t \Big[ \Tcal_0^{(3)}(v)(t,\xi)\Big]  +  \Tcal^{(3)}(v)(t,\xi).
\end{align}
By using the product rule and the assumption that $v$ is a smooth solution of \eqref{irvgDNLS}, 
we get 
\begin{equation*}
\Tcal^{(3)}(v)= \Tcal^{(3)}_{\Qcal}(v) + \Tcal_{\Tcal}^{(3)}(v)\,,
\end{equation*}
and the equation for $v$ becomes
\begin{equation*}
\partial_t v = \Qcal(v) 
+ \sum_{j=2}^3 \Tcal_0^{(j)}(v) 
+ \sum_{j=1}^2\Tcal^{(j)}_{\Tcal,1}(v) 
+ \sum_{j=2}^3\Tcal_{\Qcal}^{(j)}(v) 
+  \Tcal_{\Tcal}^{(3)}(v).
\end{equation*}

The last term $\Tcal_{\Tcal}^{(3)}(v)$ is passed to the next step in the iterative procedure. 
As we believe the iterative procedure became clear, let us present the general step of normal form reductions.

\subsection{The $J$th step of NFR}
We now write down the terms that appear in the $J$th step of normal form reductions. 
We decompose 
$\Tcal_{\Tcal}^{(J)}(v) = \Tcal_{\Tcal,1}^{(J)}(v) + \Tcal_{\Tcal,2}^{(J)}(v)\,,$
corresponding to further restricting the domain of integration of $\Tcal_{\Tcal}^{(J)}(v)$ to
\begin{equation*}
C_{J-1}=C_{J-1}(\xi; \TT) :=\Big\{ \bmxi \in\Xi_{\xi}(\TT) :  
\big|\widetilde{\mu}_{J-1} +\mu_{J} \big| \le \beta_{J-1} |\wt\mu_{J-1}|  \Big\}
\end{equation*}
and its complement, respectively, where $\beta_{J-1}\ge 2$ is to be chosen later (See \ref{rmk:choicebetas}). 
After differentiation by parts and by using the equation \eqref{irvgDNLS}, we are led to 
\begin{align}
\label{NFRJdiffparts}
 \Tcal_{\Tcal,2}^{(J)}(v)(t,\xi) &= \partial_t \Big[ \Tcal_0^{(J+1)}(v)(t,\xi)\Big]  
 +  \Tcal_{\Qcal}^{(J+1)}(v)(t,\xi) +\Tcal_{\Tcal}^{(J+1)}(v)(t,\xi)\,,
\end{align}
where 
the terms on the right-hand side are given by the following  formulae: 
\begin{align}
\label{defnTcal0Jp1}
\Tcal_0^{(J+1)}(v)(\xi)&= \sum_{\TT\in\TF(J)} 
 \int_{\bmxi\in\Xi_{\xi}(\TT)} 
  \ind_{F_J} 
  \bigg(\prod_{j=1}^{J} \frac{e^{i {\mu}_{j}t } \xi^{(j)}_2}{\widetilde{\mu}_j} \bigg) \bigg(\prod_{a\in\TT^{\infty}} v(\xi_a)\bigg)\\
\label{defnTcalQcalJp1}
\Tcal^{(J+1)}_{\Qcal}(v)(\xi)  &= \sum_{\TT\in\TF(J)} \sum_{b\in\TT^{\infty}} 
\int_{\bmxi\in\Xi_{\xi}(\TT)} 
 \ind_{F_J}  \bigg(\prod_{j=1}^{J} \frac{e^{i {\mu}_{j}t } \xi^{(j)}_2}{\widetilde{\mu}_j} \bigg) \bigg(\Qcal(v)(\xi_b) \prod_{\substack{a\in\TT^{\infty}\\ a\neq b}} v(\xi_a)\bigg)\\
\label{defnTcalTcalJp1}
\Tcal^{(J+1)}_{\Tcal}(v)(\xi)&= \sum_{\TT\in\TF(J+1)} 
\int_{\bmxi\in\Xi_{\xi}(\TT)} 
  \ind_{F_J}   \bigg(\prod_{j=1}^{J} \frac{e^{i {\mu}_{j}t } \xi^{(j)}_2}{\widetilde{\mu}_j} \bigg) 
   \big(e^{i{\mu}_{J+1} t} \xi^{(J+1)}_2 \big)
   \bigg(\prod_{a\in\TT^{\infty}} v(\xi_a)\bigg)
\end{align}
where we have sets $F_1:=C_0$ and 
$F_J:= C_0\cap C_1^c\cap\ldots\cap C_{J-1}^c$ 
for $J\ge 2$.

The equation \eqref{irvgDNLS} becomes
\begin{equation}
\label{NFReqstepJ}
\partial_t v =\Qcal(v)+  \sum_{j=2}^{J+1} \partial_t \Tcal_0^{(j)}(v) 
+ \sum_{j=2}^{J+1} \Tcal^{(j)}_{\Qcal}(v) 
+ \sum_{j=1}^{J} \Tcal^{(j)}_{\Tcal,1}(v) + \Tcal_{\Tcal}^{(J+1)}(v) .
\end{equation}

We record the formula for the term $\Tcal^{(J+1)}_{\Tcal,1}(v)$ appeared in the next step of NFR: 
\begin{align}
\label{defnTcalTcal1Jp1}
\Tcal^{(J+1)}_{\Tcal,1}(v)(\xi)&= \sum_{\TT\in\TF(J+1)} 
\int_{\bmxi\in\Xi_{\xi}(\TT)} 
  \ind_{F_J \cap C_J} \bigg(\prod_{j=1}^{J} \frac{e^{i {\mu}_{j}t } \xi^{(j)}_2}{\widetilde{\mu}_j} \bigg) 
   \big(e^{i{\mu}_{J+1} t} \xi^{(J+1)}_2 \big) \bigg(\prod_{a\in\TT^{\infty}} v(\xi_a)\bigg) \,,
\end{align}
where $F_J$ is defined above, and
\begin{equation}
\label{defn:CJ}
C_{J}=C_{J}(\xi; \TT) :=\Big\{ \bmxi \in\Xi_{\xi}(\TT) :  
\big|\widetilde{\mu}_{J} +\mu_{J+1} \big| \le \beta_{J} |\wt\mu_{J}|  \Big\}
\end{equation}
with $\beta_J\geq2$ to be determined later.

\subsection{The limit equation}

By iterating the normal form reduction step indefinitely, we formally derive the following limit equation: 
\begin{equation}
\label{limiteq}
\partial_t v = \Qcal(v) + \partial_t\bigg(\sum_{j=2}^{\infty} \Tcal_0^{(j)}(v) \bigg) 
	+ \sum_{j=2}^{\infty} \Tcal^{(j)}_{\Qcal}(v) 
	+ \sum_{j=1}^{\infty} \Tcal^{(j)}_{\Tcal,1}(v),
\end{equation}
where $\mathcal{T}_\Qcal^{(j)}$ and $\Tcal_{\Tcal,1}^{(j)}$ are $(2j+1)$-multilinear term, and $\Tcal_0^{(j)}$ is $(2j-1)$-multilinear term.
These multilinear terms $\mathcal{T}_\Qcal^{(j)}$, $\Tcal_{\Tcal,1}^{(j)}$, and $\Tcal_0^{(j)}$ appear as a result of $(j-1)$-many iterations of normal form reductions.

\section{The estimates in the strong norm}
\label{Sect:strongests}

We consider the trilinear operator 
$\Tcal_{\Phi}$  defined by 
\begin{equation}
\label{locmodNcal}
\F\Big[\Tcal_{\Phi}(v_1,v_2,v_3)\Big](t,\xi) = 
\int_{\xi=\xi_1-\xi_2+\xi_3} 
\frac{ |\xi_2|}{\jb{\Phi(\bxi)}^{\frac12}} 
\widehat{v_1}(\xi_1) \overline{\widehat{v_2}(\xi_2)} \widehat{v_3}(\xi_3) d\xi_1d\xi_2 \,,
\end{equation}
where $\Phi(\bxi)$ is given by \eqref{defn:Phi}. 
We can prove  the $H^s(\R)$-estimates  for all  higher order terms that appear in \eqref{limiteq} 
once we establish 
the following lemma:

\begin{lemma}[Basic trilinear estimate in the $H^s(\R)$-norm] 
\label{locmodest}
Let $s>\frac12$.  
Then there exists a finite constant $C=C(s)>0$ such that 
\begin{equation*}
\|\Tcal_{\Phi}(v_1,v_2,v_3)\|_{H_x^s(\R)} \leq C
 \prod_{j=1}^3 \|v_j\|_{H_x^s(\R)}   \,.
\end{equation*} 
\end{lemma}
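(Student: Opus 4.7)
The plan is to bound $\Tcal_{\Phi}(v_1,v_2,v_3)$ in $H^s(\R)$ via Plancherel and Cauchy--Schwarz in the Fourier variables, thereby reducing the claim to a uniform (Schur-type) pointwise multiplier bound. The main structural input is the factorization $\Phi(\bxi) = 2(\xi_2-\xi_1)(\xi_2-\xi_3)$ (recorded just after \eqref{defn:Phi}), which gives $\min(|\xi_2-\xi_1|,|\xi_2-\xi_3|)\leq \sqrt{|\Phi(\bxi)|/2}$. Choosing the index $j\in\{1,3\}$ for which $|\xi_2-\xi_j|$ is the smaller one and using the triangle inequality $|\xi_2|\leq |\xi_j|+|\xi_2-\xi_j|$, one gets the key pointwise bound
\[
|\xi_2|^2 \lesssim \jb{\Phi(\bxi)} + \max(|\xi_1|,|\xi_3|)^2 \leq \jb{\Phi(\bxi)} + |\xi_1|^2 + |\xi_3|^2.
\]

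Setting $g_j(\xi):=\jb{\xi}^s|\hat{v}_j(\xi)|$ so that $\|g_j\|_{L^2}=\|v_j\|_{H^s}$, I would then write
\[
\jb{\xi}^s\,|\F[\Tcal_\Phi(v_1,v_2,v_3)](\xi)| \leq \int_{\xi=\xi_1-\xi_2+\xi_3} K(\xi;\bxi)\,g_1(\xi_1)g_2(\xi_2)g_3(\xi_3)\,d\xi_1d\xi_2,
\]
with $K(\xi;\bxi):=\frac{|\xi_2|\jb{\xi}^s}{\jb{\Phi(\bxi)}^{1/2}\,\prod_{j=1}^{3}\jb{\xi_j}^s}$. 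Applying Cauchy--Schwarz in $(\xi_1,\xi_2)$, squaring, integrating in $\xi$, and making the change of variable $\xi\mapsto\xi_3$ inside the resulting mass integral, I reduce the lemma to proving
\[
\sup_{\xi\in\R}\,M(\xi)<\infty, \quad M(\xi):=\jb{\xi}^{2s}\int \frac{|\xi_2|^2\,d\xi_1\,d\xi_2}{\jb{\Phi(\bxi)}\,\prod_{j=1}^3\jb{\xi_j}^{2s}},
\]
evaluated on the hyperplane $\xi_3=\xi-\xi_1+\xi_2$.

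Substituting the pointwise bound splits $M\leq M_1+M_2^{(1)}+M_2^{(3)}$. The modulation-free piece
\[
M_1 = \jb{\xi}^{2s}\int \frac{d\xi_1\,d\xi_2}{\prod_{j=1}^3\jb{\xi_j}^{2s}}
\]
is a triple convolution of $\jb{\cdot}^{-2s}$, bounded by $\jb{\xi}^{-(6s-2)}$ for $s>\tfrac12$ via iterated Young's inequality, so $M_1\lesssim\jb{\xi}^{2-4s}\lesssim 1$. The pieces $M_2^{(1)}$ (with $|\xi_1|^2$ in the numerator) and, symmetrically, $M_2^{(3)}$ I would treat by the substitution $a:=\xi-\xi_1$, $b:=\xi-\xi_3$ (so $\Phi(\bxi)=2ab$ and $\xi_2=\xi-a-b$), followed by a case analysis partitioning $(a,b)\in\R^2$ by whether $|a|$ and $|b|$ are below $1$, between $1$ and $|\xi|$, or above $|\xi|$.

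The hard part will be bounding $M_2^{(1)}$ in the \emph{doubly-high, doubly-resonant} regime $1\leq |a|,|b|\leq |\xi|/4$: there $\jb{2ab}\sim|ab|$, while $\jb{\xi-a}\sim\jb{\xi-b}\sim\jb{\xi-a-b}\sim\jb{\xi}$, producing the characteristic $(\log|\xi|)^2$ factor coming from $\int\!\!\int\frac{da\,db}{|ab|}$ over $[1,|\xi|/4]^2$. Combined with the $\jb{\xi}^{2s}\cdot\jb{\xi}^{2-2s}/\jb{\xi}^{4s}=\jb{\xi}^{2-4s}$ arising from the remaining weights, this contribution is $\lesssim (\log|\xi|)^2\,\jb{\xi}^{2-4s}$, bounded uniformly in $\xi$ precisely because $s>\tfrac12$ forces $4s-2>0$. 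The remaining subregions (any of $|a|,|b|\leq 1$, or $|a|$ or $|b|\gtrsim|\xi|$) are analogous but simpler, yielding at most a single logarithmic factor, and all close with the same margin $4s-2>0$.
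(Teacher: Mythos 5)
Your reduction via Cauchy--Schwarz in $(\xi_1,\xi_2)$ to the Schur-type bound $\sup_\xi M(\xi)<\infty$ is legitimate (it is essentially the paper's reduction to the quantity $\mathcal{M}_4$), but the way you then estimate $M$ contains a genuine gap: the splitting $M\le M_1+M_2^{(1)}+M_2^{(3)}$ obtained from the blanket bound $|\xi_2|^2\lesssim \jb{\Phi(\bxi)}+|\xi_1|^2+|\xi_3|^2$ cannot work, because $M_2^{(1)}$ (and symmetrically $M_2^{(3)}$) is \emph{not} uniformly bounded in $\xi$. Concretely, take $|\xi|$ large and integrate over the region $|\xi_1-\xi|\le 1$, $|\xi_2|\le 1$ (so $|\xi_3|\le 2$); this is exactly your subregion $|a|\le 1$, $|b|\sim|\xi|$. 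There $|\xi_1|^2\sim\jb{\xi}^2$, $\jb{\xi}^{2s}/\jb{\xi_1}^{2s}\sim 1$, $\jb{\xi_2}^{2s}\jb{\xi_3}^{2s}\sim 1$, while $|\Phi|=2|\xi_2-\xi_1|\,|\xi_2-\xi_3|\lesssim |\xi|\,|\xi_1-\xi|$, so the integrand of $M_2^{(1)}$ is $\gtrsim \jb{\xi}^{2}/\jb{\xi(\xi_1-\xi)}$, and integrating $\xi_1$ over the unit interval around $\xi$ already gives $\gtrsim \jb{\xi}\log\jb{\xi}$. So your assertion that the subregions with $|a|\le1$ or $|b|\gtrsim|\xi|$ are ``analogous but simpler'' and close with margin $4s-2>0$ fails precisely at the low$\times$low$\times$high interaction: there the true numerator $|\xi_2|^2$ is $O(1)$, but your pointwise bound has replaced it by $|\xi_1|^2\sim|\xi|^2$, which the modulation (only of size $\lesssim|\xi|$ in that region) cannot absorb. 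The bound $|\xi_2|\le|\xi_j|+\sqrt{|\Phi|/2}$ is only useful if you retain \emph{which} index $j$ realizes $\min(|\xi_2-\xi_1|,|\xi_2-\xi_3|)$, so that the substituted frequency is the one comparable to $\xi_2$; discarding that information, as your decomposition does, destroys the estimate.

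The repair is essentially a case analysis on the relative frequency sizes, which is what the paper does: after the same duality/Schur reduction it treats $\min(|\xi_2-\xi_1|,|\xi_2-\xi_3|)\le 1$ by a separate elementary H\"older argument in the variable $\zeta=\xi_2-\xi_1$, and otherwise splits according to whether $|\xi_2|^2\lesssim\jb{\Phi(\bxi)}$ or not, in the latter case distinguishing which of $\jb{\xi_1},\jb{\xi_3}$ is comparable to $\jb{\xi_2}$ before cancelling $|\xi_2|$ against $\jb{\Phi}^{1/2}$ and the matching Sobolev weights. If you restrict your $M_2^{(1)}$ term to the set where the minimum is attained at $j=1$ (so that $\jb{\xi_1}\gtrsim\jb{\xi_2}-\sqrt{|\Phi|}$ and, in the dangerous regimes, $\jb{\xi_1}\sim\jb{\xi_2}$), your $(a,b)$ computation can be salvaged, but as written the proposal does not prove the lemma. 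A minor additional inaccuracy: for $s>\frac12$ the triple convolution of $\jb{\cdot}^{-2s}$ decays like $\jb{\xi}^{-2s}$, not $\jb{\xi}^{-(6s-2)}$ (the exponent cannot exceed that of a single factor once $2s>1$); this is harmless for $M_1$, since $\jb{\xi}^{2s}\cdot\jb{\xi}^{-2s}\lesssim1$ already suffices.
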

\begin{proof}
By duality, the desired estimate  follows once we prove that
\begin{equation}
\label{eq:ME1dual}
\int_{\xi=\xi_1-\xi_2+\xi_3}
m(\overline{\xi}) 
\widehat{v_1}(\xi_1) \widehat{v_2}(\xi_2) \widehat{v_3}(\xi_3) \widehat{v_4}(\xi) d\xi_1d\xi_2d\xi 
	\leq C  
	  \prod_{j=1}^4 \|v_j\|_{L_x^2(\R)},
\end{equation}
for any  $v_1,\ldots, v_4\in L^2(\R)$ with $\widehat{v_j}\ge 0$ ($1\le j\le 4$), 
where 
the multiplier is given by
\begin{equation}
\label{defn:m}
m(\overline{\xi}) := 
 \frac{ |\xi_2|}{\jb{\Phi(\bxi)}^{\frac12}} \cdot \frac{\jb{\xi}^s }{\langle\xi_1\rangle^s \langle\xi_2\rangle^s \langle\xi_3\rangle^s}.
\end{equation}

\noindent 
\textbf{Case 1:} $\min(|\xi_2-\xi_1|,|\xi_2-\xi_3|)\leq 1$.

Without loss of generality, 
let us assume that $|\xi_2 - \xi_1|\leq1$. 
Since $\jb{\xi_1}\sim \jb{\xi_2}$ and $\jb{\xi_3}\sim \jb{\xi}$, we have $m(\bxi)\lesssim 1$. 
Denote $\zeta := \xi_2 - \xi_1 = \xi_3 - \xi$ and thus 
by using  H\"older's  inequality, we get that 
\begin{align*}
\text{LHS of }\eqref{eq:ME1dual}
& \leq \int_{|\zeta|\leq 1}  \int_{\xi_1}\widehat{v_1}(\xi_1) \widehat{v_2} (\xi_1 + \zeta) d\xi_1\int_{\xi_3} 
\widehat{v_3} (\xi_3) \widehat{v_4} (\xi_3 - \zeta)d\xi_3 \,d\zeta \\
& \leq  \bigg\|  \int_{\xi_1} \widehat{v_1}(\xi_1) \widehat{v_2} (\xi_1 + \zeta) d\xi_1\bigg\|_{L^\infty_\zeta}
\bigg\|\int_{\xi_3} \widehat{v_3} (\xi_3) \widehat{v_4} (\xi_3 - \zeta)d\xi_3 \bigg\|_{L^\infty_\zeta}\notag \\
& \lesssim \prod_{j = 1}^4 \|v_j\|_{L^2}. 
\end{align*}

\smallskip

For all of the remaining cases we assume that 
$|\xi_2-\xi_1|>1$ and $|\xi_2-\xi_3|>1$. 
Also, we note that the largest two frequencies necessarily have comparable sizes and that the multiplier $m$ 
is symmetric in $\xi_1, \xi_3$. 

We are using  the following known fact: 
\begin{equation}
\label{GVTtypeest}
\int_{\R} \frac{1}{\jb{\eta-\xi}^a \jb{\xi}^b}  \,d\xi \, \lesssim 1  \,,
\end{equation}
for any $a,b\ge 0$ such that $a+b>1$, with implicit constant independent of $\eta\in \R$. 
Indeed, this follows immediately from Young's convolution inequality:
$$ \big\| \big(\jb{\,\cdot\,}^{-a} * \jb{\,\cdot\,}^{-b}\big)(\eta) \big\|_{L^{\infty}_{\eta}(\R)} 
\leq \|\jb{\xi}^{-a}\|_{L^p_{\xi}(\R)} \|\jb{\xi}^{-b}\|_{L^q_{\xi}(\R)} \,,$$
with $p=\frac{a+b}{a}$ and $q=\frac{a+b}{b}$ 
(if $a$ or $b$ is zero, then \eqref{GVTtypeest} is trivially true).  

By Cauchy-Schwarz inequality (see, for example, \cite[Lemma 3.7]{Tao01}), 
for \eqref{eq:ME1dual},  
it is enough to show that
\begin{equation}
\label{eq:CS}
\mathcal{M}_j:=
\sup_{\xi_j\in\R}  \bigg(\int_{\xi=\xi_1-\xi_2+\xi_3} 
 m(\bxi)^2
 d\xi_k d\xi_{\ell} \bigg)^\frac{1}{2}
 \leq 
  C
\end{equation}
for some mutually distinct $1\leq  j, k, \ell \leq 4$ (with the convention that $\xi_4=\xi$). 
Indeed, by the Cauchy-Schwarz inequality with respect to $d\xi_k d\xi_\ell$ 
(with the index $r$ such that $\{j,k,\ell,r\}=\{1,2,3,4\}$), 
\begin{align*}
\textup{LHS of } \eqref{eq:ME1dual} &\leq \int_{\R} \left(\int_{\R^2} m(\bxi)^2 d\xi_k d\xi_\ell \right)^{\frac12}
 	\left(\int_{\R^2} \widehat{v_j}(\xi_j)^2 \widehat{v_k}(\xi_k)^2 
	\widehat{v_\ell}(\xi_\ell)^2 \widehat{v_r}(\xi_r)^2 d\xi_k d\xi_\ell \right)^{\frac12} d\xi_j  \\
	&\leq \mathcal{M}_j \int_{\R} \widehat{v_j}(\xi_j) 
	\left(\int_{\R^2}  \widehat{v_k}(\xi_k)^2 \widehat{v_\ell}(\xi_\ell)^2 
	\widehat{v_r}(\xi_r)^2 d\xi_k d\xi_\ell \right)^{\frac12} d\xi_j \\
	&\leq \mathcal{M}_j \|\widehat{v_j}\|_{L^2(\R)} 
	\left( \int_{\R} \int_{\R^2}  \widehat{v_k}(\xi_k)^2 \widehat{v_\ell}(\xi_\ell)^2 
	\widehat{v_r}(\xi_r)^2 d\xi_k d\xi_\ell d\xi_j  \right)^{\frac12}
\end{align*}
where in the last step we used the Cauchy-Schwarz inequality with respect to $d\xi_j$ 
and then \eqref{eq:ME1dual} follows from \eqref{eq:CS} by possibly  changing the order of integration on the right-hand side above 
(and taking into account the linear dependence $\xi_4=\xi_1-\xi_2+\xi_3$).  

Next, we discuss several cases 
based on the frequency size of the derivative factor 
$\partial_x \overline{v_2}$. 

\smallskip
\noi \textbf{Case 2:} $|\xi_2|^2\lesssim \jb{\Phi(\bxi)}$.

Since the largest two frequencies among $\xi$, $\xi_1$, $\xi_2$, and $\xi$ necessarily have comparable sizes, there exists at least one $\xi_i$, $1\leq i\leq 3$ such that $|\xi|\lesssim |\xi_i|$.
Without loss of generality, we assume that $|\xi|\lesssim |\xi_1|$.
In this case, we have
	\[m(\bxi) \lesssim \frac{1}{\jb{\xi_2}^{s}\jb{\xi_3}^{s}},\]
and
	\[\Mcal_4 \lesssim \sup_{\xi} \bigg(\int_{\xi=\xi_1-\xi_2+\xi_3} \frac{1}{\jb{\xi_2}^{2s}\jb{\xi_3}^{2s}}d\xi_{2}d\xi_{3}\bigg)^\frac{1}{2}\lesssim 1\]
for $s>\frac{1}{2}$.

\smallskip
\noi \textbf{Case 3:} $|\xi_2|^2\gg\jb{\Phi(\bxi)}$.

In this case, we have either $|\xi_2|\gg\jb{\xi_2-\xi_1}$ or $|\xi_2|\gg\jb{\xi_2-\xi_3}$.
It follows that either $|\xi_1|\sim|\xi_2|$ or $|\xi_2|\sim|\xi_3|$ must be hold.
Reminding that $\jb{\Phi(\bxi)}\sim|\xi_2|^2$ in the case when $|\xi_1|\sim|\xi_2|\sim|\xi_3|\gg|\xi|$, it is enough to treat following three subcases.

\smallskip
\noi \textbf{Subcase 3.a:} $|\xi_1|\sim|\xi_2|\gg|\xi_3|$.

In this case, we must have $|\xi_1|\sim|\xi_2|\gg|\xi|$, because $|\xi|\sim|\xi_1|$ implies that $\jb{\Phi(\bxi)} \sim\jb{\xi-\xi_3}\jb{\xi_2-\xi_3} \sim \jb{\xi_2}^2$.
If $|\xi|\lesssim |\xi_3|$, then we have
	\[m(\bxi) \lesssim \frac{1}{\jb{\xi_2-\xi_1}^\frac{1}{2}\jb{\xi_1}^{s}\jb{\xi_2}^{s-\frac{1}{2}}},\]
and
	\[\Mcal_4 \lesssim \sup_{\xi} \bigg\{\int_{\xi_1}\frac{1}{\jb{\xi_1}^{2s}}\bigg(\int_{|\xi_2-\xi_1|>1} \frac{1}{\jb{\xi_2-\xi_1}\jb{\xi_2}^{2s-1}}d\xi_2\bigg)d\xi_1\bigg\}^\frac{1}{2}\lesssim 1\]
for $s>\frac{1}{2}$ from \eqref{GVTtypeest}.

On the other hand, if $|\xi|\gg|\xi_3|$, then $\jb{\Phi(\bxi)} \sim \jb{\xi}\jb{\xi_2}$,
	\[m(\bxi) \sim \frac{\jb{\xi}^{s-\frac{1}{2}}}{\jb{\xi_2}^{2s-\frac{1}{2}}\jb{\xi_3}^{s}} \lesssim \frac{1}{\jb{\xi_2}^{s}\jb{\xi_3}^{s}}\]
for $s\geq\frac{1}{2}$, and
	\[\Mcal_4 \lesssim \sup_{\xi} \bigg(\int_{\xi=\xi_1-\xi_2+\xi_3} \frac{1}{\jb{\xi_2}^{2s}\jb{\xi_3}^{2s}}d\xi_{2}d\xi_{3}\bigg)^\frac{1}{2}\lesssim 1\]
whenever $s>\frac{1}{2}$.

\smallskip
\noi \textbf{Subcase 3.b.} $|\xi_2|\sim|\xi_3|\gg|\xi_1|$.

This case follows from Subcase 2.b. by switching $1 \leftrightarrow 3$.

\smallskip
\noi\textbf{Subcase 3.c:} $|\xi|\sim|\xi_1|\sim|\xi_2|\sim|\xi_3|$.

In this case,
	\[m(\bxi) \sim \frac{1}{\jb{\xi-\xi_3}^\frac{1}{2}\jb{\xi-\xi_1}^\frac{1}{2}\jb{\xi_1}^{s-\frac{1}{2}}\jb{\xi_3}^{s-\frac{1}{2}}}.\]
Hence, we have
	\begin{align*}
	\mathcal{M}_4& \lesssim \sup_{\xi} \bigg(\int_{\xi=\xi_1-\xi_2+\xi_3}\frac{1}{\jb{\xi-\xi_3}\jb{\xi-\xi_1}\jb{\xi_1}^{2s-1}\jb{\xi_3}^{2s-1}}d\xi_1d\xi_3\bigg)^\frac{1}{2}\\
	& \lesssim \sup_{\xi} \bigg\{\bigg(\int_{|\xi-\xi_1|>1} \frac{1}{\jb{\xi-\xi_1}\jb{\xi_1}^{2s-1}}d\xi_1\bigg)\bigg(\int_{|\xi-\xi_3|>1} \frac{1}{\jb{\xi-\xi_3}\jb{\xi_3}^{2s-1}}d\xi_3\bigg)\bigg\}^\frac{1}{2}\lesssim 1
	\end{align*}
for $s>\frac{1}{2}$ from \eqref{GVTtypeest}.
\end{proof}

\begin{remark}
By comparing the estimate of Lemma~\ref{locmodest}  
with the similar estimate for the cubic NLS on $\R$ (see \cite[Lemma~2.3]{KOY}), 
we note that whenever $m(\bar{\xi})\lesssim 1$ (e.g.  when $\min(|\xi_2-\xi_1|,|\xi_2-\xi_3|)\leq 1$ 
or when $|\xi_1|\sim|\xi_2|\sim |\xi_3|$), 
our operator $\Tcal_{\Phi}$ 
acts as the operator $\mathcal{N}^{0}_{\leq M}$ from \cite{KOY} 
(with displacement parameter $\alpha=0$ and localization size $M\sim 1$), 
and thus 
we can appeal to the arguments used therein.  
For the sake of completeness 
we have also included the argument for Case~1  in the proof of Lemma~\ref{locmodest} above. 
\end{remark}

\begin{remark}
\label{rmk:dxonconjug}
Notice that in the above proof, 
the case when $|\xi_2|\sim|\xi|\gg|\xi_1|,|\xi_3|$ in Case 2 informs us why 
the derivative falling on the conjugate factor in the cubic nonlinearity $v^2 \partial_x \overline{v}$ 
can be handled: in the worst case scenario of the low$\times$high$\times$low $\to$ high frequency interaction, 
we can use the $\frac12$-power of the modulation 
to cancel the factor $\xi_2$ in the numerator. 
This motivates the need to use the gauge transformation 
\eqref{gaugetr} to eliminate the nonlinearity $2|v|^2 \partial_x v$ from the right-hand side of \eqref{DNLS}. 
\end{remark}

\begin{remark}
At the end-point regularity $s=\frac12$, 
with minor changes in the proof, 
we can also obtain an estimate as in Lemma~\ref{locmodest}, but for $\Tcal_{\Phi}$ defined by 
\begin{equation}
\label{locmodNcal12}
\F\Big[\Tcal_{\Phi}(v_1,v_2,v_3)\Big](t,\xi) = 
\int_{\xi=\xi_1-\xi_2+\xi_3} 
\frac{ |\xi_2|}{\jb{\Phi(\bxi)}^{\frac12+\varepsilon}} 
\widehat{v_1}(\xi_1) \overline{\widehat{v_2}(\xi_2)} \widehat{v_3}(\xi_3) d\xi_1d\xi_2 \,,
\end{equation}
where $\varepsilon>0$ can be taken arbitrarily small. 
However, in this case $C=C(\varepsilon)\nearrow \infty$ as $\varepsilon\searrow 0$. 
This remark also applies to Corollaries~\ref{cor:firstTcalTcal1} and \ref{cor:estSF}, 
Lemmata~\ref{lem:Tcal0Jp1}, and \ref{lem:TcalQcalJp1}, 
but not to Lemma~\ref{lem:TcalTcal1Jp1}.
\end{remark}

In the proofs of the following lemmata, we freely use the Fourier lattice property of $H^s(\R)$, 
i.e. 
$$\big\|\F^{-1}\big( |\F(v)| \big)\big\|_{H^s(\R)} = \|v\|_{H^s(\R)} \,,$$ 
and thus we drop the modulus notation on factors such as $v(\xi)$ 
(which henceforth we assume to be non-negative).

%


\begin{corollary}
\label{cor:firstTcalTcal1}
Let $s>\frac12$. Then for $\Tcal_{\Tcal,1}^{(1)}=\Tcal_1(v)$ given by \eqref{firstdecomp}, we have
\begin{equation*}
\big\|\Tcal_{\Tcal, 1}^{(1)}(v) \big\|_{H_x^s(\R)} \lesssim 
 N^{\frac12} \|v\|^3_{H^s_x(\R)} \,.
 \end{equation*}
\end{corollary}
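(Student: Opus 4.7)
The plan is to reduce the estimate directly to the basic trilinear estimate of Lemma~\ref{locmodest}. Recall that by the definitions given in \eqref{firstdecomp} and below it, $\Tcal_{\Tcal,1}^{(1)}(v) = \Tcal_1(v) = \Tcal(v) - \Tcal_2(v)$ is nothing more than $\Tcal(v)$ from \eqref{defn:Tcal} with the convolution integration further restricted to the \emph{almost resonant} region $\{|\Phi(\bxi)| \le N\}$. Concretely,
\begin{equation*}
\F\bigl[\Tcal_{\Tcal,1}^{(1)}(v)\bigr](t,\xi) = \int_{\substack{\xi=\xi_1-\xi_2+\xi_3\\ |\Phi(\bxi)|\le N}} e^{i\Phi(\bxi)t}\,\xi_2\,\hat v(\xi_1)\,\overline{\hat v(\xi_2)}\,\hat v(\xi_3)\,d\xi_1 d\xi_2.
\end{equation*}

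First I would observe that, on the restricted region, $\jb{\Phi(\bxi)} \lesssim N$, so that the trivial pointwise bound
\begin{equation*}
|\xi_2|\,\ind_{\{|\Phi(\bxi)|\le N\}} \;\lesssim\; N^{\frac12}\cdot \frac{|\xi_2|}{\jb{\Phi(\bxi)}^{\frac12}}
\end{equation*}
holds. Taking absolute values inside the Fourier integral and using this inequality, I obtain the pointwise-in-$\xi$ majorization
\begin{equation*}
\bigl|\F[\Tcal_{\Tcal,1}^{(1)}(v)](t,\xi)\bigr| \;\lesssim\; N^{\frac12}\cdot \F\bigl[\Tcal_{\Phi}(V,V,V)\bigr](t,\xi),
\end{equation*}
where $V := \F^{-1}(|\hat v|)$ and $\Tcal_{\Phi}$ is the trilinear form from \eqref{locmodNcal}. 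Crucially, the multiplier defining $\Tcal_{\Phi}$ is nonnegative, so its Fourier transform is a genuine nonnegative function against which we can compare.

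Now I would conclude using the Fourier lattice property of $H^s(\R)$, namely $\|\F^{-1}(|\hat f|)\|_{H^s} = \|f\|_{H^s}$, and Lemma~\ref{locmodest}:
\begin{equation*}
\bigl\|\Tcal_{\Tcal,1}^{(1)}(v)\bigr\|_{H^s_x(\R)} \lesssim N^{\frac12}\,\bigl\|\Tcal_{\Phi}(V,V,V)\bigr\|_{H^s_x(\R)} \lesssim N^{\frac12}\,\|V\|_{H^s_x}^3 = N^{\frac12}\,\|v\|_{H^s_x(\R)}^3,
\end{equation*}
which is the desired bound. There is no real analytic obstacle here: the oscillatory factor $e^{i\Phi t}$ and the sign of $\hat v$ play no role in the modulus estimate, and the whole content is the trade $|\xi_2|\mapsto |\xi_2|\jb{\Phi}^{-1/2}$ paid for by the factor $N^{1/2}$, which is exactly what the modulation cutoff $|\Phi|\le N$ gives for free. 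The only point to be careful about is invoking the Fourier lattice property so that dropping the complex conjugate and replacing $\hat v$ by $|\hat v|$ is harmless; this is precisely the convention the authors announce before the statement.
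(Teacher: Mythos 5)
Your proposal is correct and follows essentially the same route as the paper: both proofs use that on the region $|\Phi(\bxi)|\le N$ one has $1\lesssim N^{\frac12}\jb{\Phi(\bxi)}^{-\frac12}$, majorize the Fourier integrand pointwise by $N^{\frac12}$ times the multiplier of $\Tcal_{\Phi}$, and conclude by Lemma~\ref{locmodest} together with the Fourier lattice property of $H^s(\R)$. The only difference is cosmetic: you introduce $V=\F^{-1}(|\hat v|)$ explicitly, whereas the paper invokes its standing convention of assuming $\hat v\ge 0$.
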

\begin{proof}
We have
\begin{align*}
\Big|\F\big[\Tcal_{\Tcal,1}^{(1)}(v)\big](\xi) \Big| 
&\le
\int_{\substack{\xi=\xi_1-\xi_2+\xi_3\\ |\Phi(\bar{\xi})|\le N}} 
N^{\frac12} N^{-\frac12}
|\xi_2|  \widehat v(t,\xi_1)  \widehat v(t,\xi_2)  \widehat v(t,\xi_3)  d\xi_1d\xi_2\\
&\lesssim N^{\frac12} \int_{\xi=\xi_1-\xi_2+\xi_3} \frac{|\xi_2|}{\jb{\Phi(\bxi)}^{\frac12}}  
\widehat v(t,\xi_1)  \widehat v(t,\xi_2)  \widehat v(t,\xi_3)  d\xi_1d\xi_2\\
&=N^{\frac12} \Big|\F\big[ \Tcal_{\Phi}(v)\big](\xi)\Big|
\end{align*}
and therefore the estimate follows from Lemma~\ref{locmodest}.
\end{proof}


For estimating the remaining nonlinear terms of \eqref{limiteq},  
it is convenient to introduce the mapping $\SF(\TT;\,\cdot\,)$ 
associated to an ordered tree $\TT$, say of generation $J$,  
which essentially applies the operator $\Tcal_{\Phi}$ iteratively taking into account the structure of $\TT$. 
We define these mappings by the following bottom-up algorithmic procedure. 

\begin{definition}
\label{DEF:S} 
Let $J\ge 1$ 
and $\TT\in  \TF(J)$. 
We define the $(2J+1)$-linear map $\SF(\TT;\,\cdot\,)$ 
on space-time functions $v_j \in C(I; H^s(\R))$ ($1\le j\le 2J+1=|\TT^{\infty}|$) 
by the following rules. 

\begin{itemize}

\item[(i)] Replace the $j$th terminal node of $\TT$ 
by $v_j$, for all $j\in\{1,\ldots, 2J+1\}$.

\item[(ii)] 
For $j=J, J-1, \ldots, 1$, 
replace the $j$th root node $r^{(j)}$  
by  the  trilinear operator $\Tcal_{\Phi}$ 
whose  arguments are given by 
 the functions associated with its three children. 

\end{itemize}
\end{definition}

For such mappings, 
we have  the following corollary which is a consequence of Lemma~\ref{locmodest}. 

\begin{corollary}
\label{cor:estSF}
Let $s>\frac12$, $J\ge 1$ and $\TT\in \TF(J)$.   
Then
\begin{equation*}
\big\|\SF(\TT;v_1,\ldots ,v_{2J+1}) \big\|_{H_x^s(\R)} \leq C^{J} 
 \prod_{j=1}^{2J+1} \|v_j\|_{H_x^s(\R)}   \,,
\end{equation*} 
where $C$ is the constant given by Lemma~\ref{locmodest}.  
\end{corollary}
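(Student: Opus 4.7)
The plan is to proceed by induction on the generation $J\ge 1$ of the ordered tree $\TT$, using Lemma~\ref{locmodest} as the single analytic input at each level.

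For the base case $J=1$, the tree $\TT=\TT_1$ is the sole ternary tree of generation $1$, and Definition~\ref{DEF:S} gives $\SF(\TT_1;v_1,v_2,v_3)=\Tcal_{\Phi}(v_1,v_2,v_3)$, so the claim with $C^1=C$ is exactly the statement of Lemma~\ref{locmodest}.

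For the inductive step, assume the bound for generation $J-1$ and fix $\TT\in\TF(J)$. By the construction of ordered trees recalled in Appendix~\ref{appdxA}, $\TT$ is obtained from some $\TT'\in \TF(J-1)$ by replacing one of its terminal nodes with a non-terminal node $r^{(J)}$ carrying three new terminal children; these three children are necessarily terminal in $\TT$ itself. Let $\{i_1,i_2,i_3\}\subset\{1,\ldots,2J+1\}$ be their indices and set
\[w:=\Tcal_{\Phi}(v_{i_1},v_{i_2},v_{i_3}).\]
Lemma~\ref{locmodest} immediately yields $\|w\|_{H^s(\R)}\le C\,\|v_{i_1}\|_{H^s(\R)}\|v_{i_2}\|_{H^s(\R)}\|v_{i_3}\|_{H^s(\R)}$. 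Because the algorithm in Definition~\ref{DEF:S} processes the root nodes in the order $j=J,J-1,\ldots,1$, the function $w$ is precisely the value deposited at $r^{(J)}$ before the remaining bottom-up replacements are carried out on the pruned tree $\TT'$ (in which $r^{(J)}$ is now a terminal). Hence
\[\SF(\TT;v_1,\ldots,v_{2J+1}) = \SF\bigl(\TT';\wt v_1,\ldots,\wt v_{2J-1}\bigr),\]
where one of the $\wt v_\ell$ equals $w$ and the other inputs are the $v_j$'s with $j\notin\{i_1,i_2,i_3\}$.

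I would then apply the inductive hypothesis on the right-hand side, obtaining a factor $C^{J-1}$ multiplied by $\|w\|_{H^s(\R)}\prod_{j\notin\{i_1,i_2,i_3\}}\|v_j\|_{H^s(\R)}$; combining this with the bound on $\|w\|_{H^s(\R)}$ produces $C^{J}\prod_{j=1}^{2J+1}\|v_j\|_{H^s(\R)}$, closing the induction. The only structural point one needs to verify is that peeling off the generation-$J$ expansion commutes with the bottom-up evaluation of $\SF$; this is built into the indexing $j=J,\ldots,1$ and the fact that all children of $r^{(J)}$ are terminals of $\TT$. No analytic obstacle arises beyond Lemma~\ref{locmodest}, and in particular no combinatorial factor depending on the number of trees in $\TF(J)$ enters the constant for a single tree.
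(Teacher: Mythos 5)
Your proposal is correct and is essentially the paper's own argument: the paper likewise obtains the bound by applying Lemma~\ref{locmodest} once per root node (phrased as a top-down traversal of the $J$ root nodes rather than your induction peeling off the last generation), accumulating exactly the constant $C^{J}$. Your remark that no combinatorial factor from $|\TF(J)|$ enters for a fixed tree also matches the paper, where the factor $c_J$ only appears later when summing over $\TT\in\TF(J)$.
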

\begin{proof}
It follows  immediately by successively applying Lemma~\ref{locmodest}. Namely, 
we start with the root node $r^{(1)}$ of $\TT$ and we move top-down on $\TT$. 
Since $\TT$ is a tree of generation $J$, 
it has $J$ many root nodes and thus we pick up the constant $C^J$. 
\end{proof}

Next, for simplicity we set $\beta_0:=1$ and for any $J\ge 1$ we put
\begin{equation}
\label{defn:bJ}
b_J:=  \prod_{j=0}^{J-1} \beta_j \,. 
\end{equation} 

\begin{remark}
\label{rmk:choicebetas}
For each $s>\frac{1}{2}$, we choose the constants $\beta_j$'s such that we ensure
$$ \sup_{J\ge 1} \frac{c_{J+1} \beta_J (10C)^{J+1} (2J+6)}{b_1^{\theta}\cdots b_{J-1}^{\theta}} 
\,\lesssim\, 1\,,$$
where $c_{J+1}=1\cdot 3\cdot 5\cdot\cdots\cdot (2J+1)$ (see \eqref{cj}) and $\theta=\theta(s):=\min\{2s-1,\frac{1}{2}\}$.
For instance, we may take
	\[\beta_j=(2j+3)^{\frac{2}{\theta}},\quad j\geq 1.\]
Then, one can observe that the factorial decay of denominator $5^{2J-2}\cdot 7^{2J-4}\cdot\cdots\cdot(2J-1)^4\cdot(2J+1)^2$ is enough to compensate the factorial growth term $c_{J+1}$ and the exponential growth term $(10C)^J$.
\end{remark}

We are now ready to prove the estimates for all nonlinear terms of \eqref{limiteq}, which we treat in decreasing order of difficulty.

\begin{lemma}
\label{lem:TcalTcal1Jp1}
Let $s>\frac{1}{2}$ and $J\ge 1$.  
Then, for $\Tcal_{\Tcal,1}^{(J+1)}$  given by \eqref{defnTcalTcal1Jp1} 
we have 
\begin{align}
\label{TcalTcal1Jp1}
\|\Tcal_{\Tcal,1}^{(J+1)}(v)\|_{H_x^s(\R)} &\lesssim N^{-\frac{1}{2}(J-1)} \|v\|_{H_x^s(\R)}^{2J+3}
\,,\\ 
\label{TcalTcal1Jp1diff}
\|\Tcal_{\Tcal,1}^{(J+1)}(v) -\Tcal_{\Tcal,1}^{(J+1)}(w) \|_{H_x^s(\R)} 
&\lesssim N^{-\frac{1}{2}(J-1)} \Big(\|v\|_{H_x^s(\R)}^{2J+2} + \|w\|_{H_x^s(\R)}^{2J+2} \Big) \|v-w\|_{H_x^s(\R)}\,.
\end{align}
\end{lemma}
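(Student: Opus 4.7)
The plan is to write each summand of $\Tcal^{(J+1)}_{\Tcal,1}(v)$ as a pointwise bound of $\Tcal_\Phi$-type, iterated $(J+1)$ times (so that Corollary~\ref{cor:estSF} applies), multiplied by a scalar depending only on $N$, $J$ and the $\{\beta_j\}$ that carries the claimed $N^{-(J-1)/2}$ decay. The multiplier at the $j$-th level of $\Tcal_\Phi$ is $|\xi^{(j)}_2|/\jb{\mu_j}^{1/2}$, whereas $\Tcal^{(J+1)}_{\Tcal,1}$ carries $\xi^{(j)}_2/\wt{\mu}_j$ at levels $1\le j\le J$ and a bare factor $\xi^{(J+1)}_2$ at level $J+1$; the entire argument is a game of trading the extra half-power in the denominators $\wt{\mu}_j^{1/2}$ for cumulative modulation lower bounds coming from membership in $F_J\cap C_J$.

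First I would extract the modulation bounds. From $C_0$, $|\wt{\mu}_1|=|\mu_1|>N$, and from $C_{j-1}^c$ with $2\le j\le J$, $|\wt{\mu}_j|=|\wt{\mu}_{j-1}+\mu_j|>\beta_{j-1}|\wt{\mu}_{j-1}|$; inductively $|\wt{\mu}_j|\ge Nb_j$. Since $\beta_{j-1}\ge 2$, the reverse triangle inequality further yields $|\mu_j|\ge |\wt{\mu}_j|/2$ for $j\ge 2$, so $\jb{\mu_j}\sim |\wt{\mu}_j|$. On $C_J$, $|\mu_{J+1}|\le (\beta_J+1)|\wt{\mu}_J|$, hence $\jb{\mu_{J+1}}^{1/2}\lesssim \beta_J^{1/2}|\wt{\mu}_J|^{1/2}$. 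Writing the integrand modulus as $\prod_{j=1}^J|\xi^{(j)}_2|/|\wt{\mu}_j|\cdot |\xi^{(J+1)}_2|=R(\bmxi)\cdot\prod_{j=1}^{J+1}|\xi^{(j)}_2|/\jb{\mu_j}^{1/2}$, the residual $R$ equals $\jb{\mu_{J+1}}^{1/2}\prod_{j=1}^J\jb{\mu_j}^{1/2}/|\wt{\mu}_j|\lesssim \beta_J^{1/2}\prod_{j=1}^{J-1}|\wt{\mu}_j|^{-1/2}\lesssim \beta_J^{1/2}N^{-(J-1)/2}(b_1\cdots b_{J-1})^{-1/2}$, uniformly on $F_J\cap C_J$. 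Using the Fourier lattice property to discard phases, pulling this $\bmxi$-independent bound on $R$ out of the integral, and identifying the remainder as (the Fourier transform of) $\SF(\TT;v,\ldots,v)$, Corollary~\ref{cor:estSF} gives for each fixed $\TT\in\TF(J+1)$ the bound $C^{J+1}\beta_J^{1/2}N^{-(J-1)/2}(b_1\cdots b_{J-1})^{-1/2}\|v\|_{H^s}^{2J+3}$.

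Finally, I sum over the $c_{J+1}=|\TF(J+1)|$ trees. Because $b_j\ge 1$ and $\theta=\min\{2s-1,\tfrac{1}{2}\}\le\tfrac{1}{2}$, we have $(b_1\cdots b_{J-1})^{-1/2}\le (b_1\cdots b_{J-1})^{-\theta}$, so the $J$-dependent prefactor is dominated by $c_{J+1}(10C)^{J+1}\beta_J(b_1\cdots b_{J-1})^{-\theta}$, which is $O(1)$ uniformly in $J$ by the explicit choice of $\{\beta_j\}$ in Remark~\ref{rmk:choicebetas}; this gives \eqref{TcalTcal1Jp1}. The Lipschitz-type bound \eqref{TcalTcal1Jp1diff} follows from the $(2J+3)$-linear nature of $\Tcal^{(J+1)}_{\Tcal,1}$ together with the standard telescoping identity $v^{\otimes(2J+3)}-w^{\otimes(2J+3)}=\sum_{k=1}^{2J+3}w^{\otimes(k-1)}\otimes(v-w)\otimes v^{\otimes(2J+3-k)}$, each summand handled exactly as above. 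I expect the main obstacle to be purely combinatorial: the factorial growth $c_{J+1}$ of the number of trees and the extra loss $\beta_J^{1/2}$ must be overwhelmed by the factorial gain produced by the $b_j^{-\theta}$ weights, which is exactly the balance that the choice $\beta_j=(2j+3)^{2/\theta}$ in Remark~\ref{rmk:choicebetas} was engineered to enforce.
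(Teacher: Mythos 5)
Your proposal is correct and follows essentially the same route as the paper's own proof: you extract the same modulation information from $F_J\cap C_J$ (namely $|\wt\mu_j|\gtrsim b_j N$, $\jb{\mu_j}\sim|\wt\mu_j|$ for $2\le j\le J$, and $\jb{\mu_{J+1}}\lesssim \beta_J|\wt\mu_J|$) to dominate the multiplier by $\beta_J^{1/2}N^{-\frac12(J-1)}(b_1\cdots b_{J-1})^{-1/2}$ times the iterated $\Tcal_{\Phi}$ multiplier, then invoke Corollary~\ref{cor:estSF}, the tree count $c_{J+1}$, Remark~\ref{rmk:choicebetas}, and a telescoping argument for \eqref{TcalTcal1Jp1diff}. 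No changes are needed.
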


\begin{proof}
With $\Tcal^{(J+1)}_{\Tcal,1} (\TT; v)$ simply denoting the summand in \eqref{defnTcalTcal1Jp1}, 
we have 
\begin{equation*}
\Tcal_{\Tcal,1}^{(J+1)}(v) = \sum_{\TT\in\TF(J+1)}  \Tcal^{(J+1)}_{\Tcal,1} (\TT; v) .
\end{equation*}
and thus
\begin{align}
\label{TcalTcal1Jsup}
\|\Tcal^{(J+1)}_{\Tcal,1} (v) \|_{H^s} 
& \leq  c_{J+1} \sup_{\TT\in \TF(J+1)} \| \Tcal^{(J+1)}_{\Tcal,1} (\TT; v)  \|_{H^s} \,.
\end{align}
Now fix $\TT\in \TF(J+1)$. 
We recall that the frequency support of $\Tcal_{\Tcal,1}^{(J+1)}(\TT;v)$  
is 
$$C_0\cap C_1^c \cap\cdots \cap C_{J-1}^c\cap C_J \,.$$ 
Hence, we have
$$|\mu_1|>N\,,\quad 
|\wt\mu_{j}| > \beta_{j-1} |\wt \mu_{j-1}| \text{ for }j=2,\ldots,J\,,\quad\text{and}\quad 
|\wt \mu_{J+1}|\le \beta_J | \wt \mu_{J}| \,.$$
In particular, $|\wt \mu_j|>b_j N$ for $j=1, \ldots, J$.  
Note that $\beta_{j-1}\ge 2$ for $j=2,\ldots, J$.
Then, from $|\mu_j| \leq|\wt\mu_j|+|\wt\mu_{j-1}| <\frac32 |\wt \mu_j|$ and 
$|\wt\mu_j| \leq |\mu_j|+|\wt\mu_{j-1}|<|\mu_j| + \frac12 |\wt \mu_j|$,
we deduce $ |\wt \mu_j| \sim |\mu_j|$, for $j=2,\ldots, J$. 
Also, since $|\mu_{J+1}| \le |\wt \mu_{J+1}| + |\wt \mu_J| \le (\beta_J+1) |\wt \mu_J|$, 
we get $|\mu_{J+1}| \leq 2\beta_J  |\wt \mu_J|$.
Thus we have 
\begin{align*}
\Tcal^{(J+1)}_{\Tcal,1} (\TT; v)  &\leq 
\int_{\bmxi\in\Xi_{\xi}(\TT)} 
  \ind_{C_J \cap F_J} \bigg(\prod_{j=1}^{J} \frac{ |\xi^{(j)}_2|}{|\widetilde{\mu}_j|} \bigg) 
   \big|\xi^{(J+1)}_2 \big| \bigg(\prod_{a\in\TT^{\infty}} v(\xi_a) \bigg)  \\
&\lesssim  \int_{\bmxi\in\Xi_{\xi}(\TT)} 
  \bigg(\prod_{j=1}^{J-1} \frac{ |\xi^{(j)}_2|}{ (b_j N)^{\frac12} \jb{\mu_j}^{\frac12}} \bigg) 
     \frac{|\xi_2^{(J)}| }{(2\beta_J)^{-\frac12} \jb{\mu_J}^{\frac12} \jb{\mu_{J+1}}^{\frac12}} \big|\xi^{(J+1)}_2 \big| 
     \bigg(\prod_{a\in\TT^{\infty}} v(\xi_a) \bigg)\\
&\lesssim \beta_J^{\frac12} \prod_{j=1}^{J-1} b_j^{-\frac12}  N^{-\frac12(J-1)} \cdot \SF(\TT;v)
\end{align*} 
Therefore, by Corollary~\ref{cor:estSF} and \eqref{TcalTcal1Jsup}, 
we get 
$$ \| \Tcal^{(J+1)}_{\Tcal,1} (v)  \|_{H^s(\R)}  \lesssim 
\frac{c_{J+1} \beta_J^{\frac12} C^{J+1}}{b_1^{\frac12}\cdots b_{J-1}^{\frac12}}  
N^{-\frac12(J-1)} \|v\|_{H^s(\R)}^{2J+3} \,.$$

For the difference estimate \eqref{TcalTcal1Jp1diff}, 
a similar argument applies. 
Namely, one writes the difference using a telescopic sum and employs the multilinear version of the operator 
$\SF(T,\cdot)$ with precisely one entry being $v-w$ and the others being either $v$ or $w$. 
Compared to \eqref{TcalTcal1Jp1}, 
we note that for \eqref{TcalTcal1Jp1diff} we pick up an extra factor of $2J+4$ since we have the bound 
$$\big|a^{2J+3} - b^{2J+3}\big| \leq \bigg(\sum_{j=1}^{2J+3} |a|^{2J+3-j} |b|^{j-1}\bigg)|a-b|\leq 
(2J+4)\Big(|a|^{2J+2} +|b|^{2J+2}\Big)|a-b|\,.$$ 
Hence, 
$$ \| \Tcal^{(J+1)}_{\Tcal,1} (v) -  \Tcal^{(J+1)}_{\Tcal,1} (w)\|_{H^s(\R)}  \lesssim 
\frac{c_{J+1} \beta_J^{\frac12} C^{J+1} (2J+4)}{b_1^{\frac12}\cdots b_{J-1}^{\frac12}} 
\Big(\|v\|_{H^s(\R)}^{2J+2}  + \|w\|_{H^s(\R)}^{2J+2} \Big)
\|v-w\|_{H^s(\R)}^{2J} \,.$$
By taking into account Remark~\ref{rmk:choicebetas} we deduce \eqref{TcalTcal1Jp1} and \eqref{TcalTcal1Jp1diff}. 
\end{proof}


Next, we consider the nonlinear terms coming as boundary terms when applying integration by parts with respect to the temporal variable in Section~\ref{sect2}.

\begin{lemma}
\label{lem:Tcal0Jp1}
Let $s >\frac12$  and $J\ge 1$.  
Then, for $\Tcal_0^{(J+1)}$  given by \eqref{defnTcal0Jp1} 
we have 
\begin{align}
\label{estTcal0Jp1inHs}
\|\Tcal_0^{(J+1)}(v)\|_{H^s(\R)} &\lesssim N^{-\frac{1}{2}J} \|v\|_{H^s(\R)}^{2J+1}
\,,\\ 
\label{estTcal0Jp1diff}
\|\Tcal_0^{(J+1)}(v) -\Tcal_0^{(J+1)}(w) \|_{H^s(\R)} &\lesssim N^{-\frac{1}{2}J} 
 \Big(\|v\|_{H^s(\R)}^{2J} + \|w\|_{H^s(\R)}^{2J} \Big) \|v-w\|_{H^s(\R)} \,.
\end{align}
\end{lemma}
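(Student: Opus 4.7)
The plan is to mirror the proof of Lemma~\ref{lem:TcalTcal1Jp1}, which this lemma closely resembles but is actually simpler since in $\Tcal_0^{(J+1)}$ the number of frequency factors $\xi_2^{(j)}$ in the numerator exactly matches the number of modulation denominators $\wt\mu_j$ (there is no extra ``outer'' $\xi_2^{(J+1)}$ to handle, nor an $\mu_{J+1}$ to convert).

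First, I would write $\Tcal_0^{(J+1)}(v) = \sum_{\TT\in\TF(J)}\Tcal_0^{(J+1)}(\TT;v)$ and reduce, as in \eqref{TcalTcal1Jsup}, to bounding a single summand with the factorial multiplicity $c_J = |\TF(J)|$ pulled out front. Fixing $\TT\in\TF(J)$, I would then exploit the frequency support $F_J = C_0\cap C_1^c\cap\cdots\cap C_{J-1}^c$, which (as already observed in the previous lemma) implies $|\wt\mu_j|>b_j N$ and $|\wt\mu_j|\sim|\mu_j|$ for $j=1,\ldots,J$. The pointwise estimate of the symbol becomes
\begin{equation*}
\bigg|\prod_{j=1}^{J}\frac{\xi_2^{(j)}}{\wt\mu_j}\bigg|\,\lesssim\, N^{-J/2}\prod_{j=1}^{J}b_j^{-1/2}\,\prod_{j=1}^{J}\frac{|\xi_2^{(j)}|}{\jb{\mu_j}^{1/2}},
\end{equation*}
where each factor on the right is exactly the symbol of the trilinear operator $\Tcal_\Phi$ applied at the $j$th root node of $\TT$. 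Hence the integrand is bounded above (pointwise in frequency) by the symbol of $\SF(\TT;v)$.

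At that point Corollary~\ref{cor:estSF} furnishes $\|\SF(\TT;v)\|_{H^s(\R)}\le C^J\|v\|_{H^s(\R)}^{2J+1}$, and combining with the tree count $c_J$ gives
\begin{equation*}
\|\Tcal_0^{(J+1)}(v)\|_{H^s(\R)} \,\lesssim\, \frac{c_J\, C^J}{b_1^{1/2}\cdots b_J^{1/2}}\, N^{-J/2}\,\|v\|_{H^s(\R)}^{2J+1},
\end{equation*}
and the prefactor is $\lesssim 1$ uniformly in $J$ by the choice of $\{\beta_j\}$ in Remark~\ref{rmk:choicebetas} (the argument there subsumes this case a fortiori, since fewer $b_j$'s appear and no $\beta_J$ factor is incurred). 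This yields \eqref{estTcal0Jp1inHs}.

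The difference estimate \eqref{estTcal0Jp1diff} is handled by the same telescoping device used for \eqref{TcalTcal1Jp1diff}: write $\prod v(\xi_a)-\prod w(\xi_a)$ as a sum of $2J+1$ terms in which exactly one factor is $v-w$ and the remaining factors are mixtures of $v$ and $w$, then apply the multilinear version of $\SF(\TT;\,\cdot\,)$ to each term. The extra factor of $2J+1$ is absorbed by the same summability ensured in Remark~\ref{rmk:choicebetas}. I do not foresee a genuine obstacle here; the only delicate point is the bookkeeping comparison $|\wt\mu_j|\sim|\mu_j|$ on $F_J$, but this is already established verbatim in the preceding lemma and can simply be invoked.
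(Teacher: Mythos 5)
Your proposal is correct and follows essentially the same route as the paper's proof: reduce to a single tree at the cost of $c_J$, use the support $F_J$ to bound the symbol $\prod_j |\xi_2^{(j)}|/|\wt\mu_j|$ by $N^{-J/2}\prod_j b_j^{-1/2}$ times the iterated symbol of $\Tcal_\Phi$, invoke Corollary~\ref{cor:estSF}, and absorb the $J$-dependent prefactor via Remark~\ref{rmk:choicebetas}, with the difference estimate handled by the same telescoping as in Lemma~\ref{lem:TcalTcal1Jp1}. The only cosmetic deviation (your $2J+1$ telescoping terms versus the paper's $2J+2$, and keeping $b_J^{-1/2}$ in the prefactor) is immaterial.
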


\begin{proof}
With $\Tcal^{(J+1)}_{0} (\TT; v)$ simply denoting the summand in \eqref{defnTcal0Jp1}, 
we have 
\begin{equation}
\label{Tcal0SF0}
\Tcal_0^{(J+1)}(v) = \sum_{\TT\in\TF(J)}  \Tcal_0^{(J+1)} (\TT; v) .
\end{equation}
and thus
\begin{align}
\label{Tcal0Jsup}
\|\Tcal^{(J+1)}_0 (v) \|_{H^s} 
& \leq  c_J \sup_{\TT\in \TF(J)} \| \Tcal_0^{(J+1)}( \TT;  v)\|_{H^s} \,.
\end{align}
Now fix $\TT\in \TF(J)$. 
We recall that the frequency support of $\Tcal_{0}^{(J+1)}(\TT;v)$  
is 
$F_J=C_0\cap C_1^c \cap\cdots \cap C_{J-1}^c$. 
Hence, we have
$|\mu_1|>N\,,\  
|\wt\mu_{j}| > \beta_{j-1} |\wt \mu_{j-1}| \text{ for $j=2,\ldots,J$}$. 
As in the proof of Lemma~\ref{lem:TcalTcal1Jp1}, 
we have $|\mu_j|\sim |\wt \mu_j|>b_{j-1} N$ for $j=2, \ldots, J$.  
Thus we have 
\begin{align*}
\Tcal^{(J+1)}_{0} (\TT; v)  &\leq 
\int_{\bmxi\in\Xi_{\xi}(\TT)} 
  \ind_{F_J} \bigg(\prod_{j=1}^{J} \frac{ |\xi^{(j)}_2|}{|\widetilde{\mu}_j|} \bigg) 
  \bigg(\prod_{a\in\TT^{\infty}} v(\xi_a) \bigg)  \\
&\lesssim  \int_{\bmxi\in\Xi_{\xi}(\TT)} 
  \bigg(\prod_{j=1}^{J} \frac{ |\xi^{(j)}_2|}{ (b_j N)^{\frac12} \jb{\mu_j}^{\frac12}} \bigg) 
          \bigg(\prod_{a\in\TT^{\infty}} v(\xi_a) \bigg)\\
&\lesssim \bigg( \prod_{j=1}^{J-1} b_j^{-\frac12} \bigg) N^{-\frac12 J} \cdot \SF(\TT;v)
\end{align*} 
Therefore, by Corollary~\ref{cor:estSF} and \eqref{Tcal0Jsup}, 
we get 
$$ \| \Tcal^{(J+1)}_{0} (v)  \|_{H^s(\R)}  \lesssim 
\frac{c_{J}  C^J}{b_1^{\frac12}\cdots b_{J-1}^{\frac12}}  N^{-\frac12 J} \|v\|_{H^s(\R)}^{2J+1} \,.$$

For the difference estimate \eqref{estTcal0Jp1diff}, 
an observation analogous to that in the proof of Lemma~\ref{lem:TcalTcal1Jp1} applies
and thus we obtain
$$\|\Tcal_0^{(J+1)}(v) -\Tcal_0^{(J+1)}(w) \|_{H^s(\R)} \lesssim 
\frac{c_{J}  C^J(2J+2)}{b_1^{\frac12}\cdots b_{J-1}^{\frac12}} N^{-\frac{1}{2}J} \Big(\|v\|_{H^s(\R)}^{2J}  + \|w\|_{H^s(\R)}^{2J} \Big) \|v-w\|_{H^s(\R)} \,.$$
\end{proof}

In the proofs  of the following lemma, 
we skip the argument for the difference estimate altogether as the same ideas apply as for the difference estimate of Lemma~\ref{lem:Tcal0Jp1}.

\begin{lemma}
\label{lem:TcalQcalJp1}
Let $s>\frac12$ and $J\ge 1$.  
Then, for $\Tcal_{\Qcal}^{(J+1)}$  given by \eqref{defnTcalQcalJp1} 
we have 
\begin{align}
\label{estTcalQcalJp1}
\|\Tcal_{\Qcal}^{(J+1)}(v)\|_{H_x^s(\R)} &\lesssim N^{-\frac{1}{2}J}\|v\|_{H_x^s(\R)}^{2J+5}
,\\ 
\label{estTcalQcalJp1diff}
\|\Tcal_{\Qcal}^{(J+1)}(v) -\Tcal_{\Qcal}^{(J+1)}(w) \|_{H_x^s(\R)} &\lesssim N^{-\frac{1}{2}J} 
	\Big( \|v\|_{H_x^s(\R)}^{2J+4}  + \|w\|_{H_x^s(\R)}^{2J+4} \Big) \|v-w\|_{H_x^s(\R)}
\end{align}
\end{lemma}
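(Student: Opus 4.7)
The plan is to follow the template established in Lemma~\ref{lem:Tcal0Jp1}, with two modifications: the additional outer sum over $b\in\TT^{\infty}$ (contributing a factor of $2J+1$) and the substitution of one factor $v(\xi_b)$ by $\Qcal(v)(\xi_b)$ (which, via the algebra estimate \eqref{NfrakHsest}, increases the total power of $\|v\|_{H^s}$ from $2J+1$ to $2J+5$).

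First, I would denote by $\Tcal_{\Qcal}^{(J+1)}(\TT,b;v)$ the innermost summand in \eqref{defnTcalQcalJp1}, so that
\begin{equation*}
\Tcal_{\Qcal}^{(J+1)}(v) = \sum_{\TT\in\TF(J)}\sum_{b\in\TT^{\infty}} \Tcal_{\Qcal}^{(J+1)}(\TT,b;v).
\end{equation*}
Since $|\TF(J)|=c_J$ and $|\TT^{\infty}|=2J+1$, taking $H^s$-norms gives
\begin{equation*}
\|\Tcal_{\Qcal}^{(J+1)}(v)\|_{H_x^s} \leq c_J (2J+1) \sup_{\TT,\,b}\|\Tcal_{\Qcal}^{(J+1)}(\TT,b;v)\|_{H_x^s}.
\end{equation*}

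Next, I would fix $\TT\in\TF(J)$ and $b\in\TT^{\infty}$. The frequency support lies in $F_J = C_0\cap C_1^c\cap\cdots\cap C_{J-1}^c$, so exactly as in Lemma~\ref{lem:Tcal0Jp1} we have $|\wt\mu_j|\gtrsim b_j N$ and $|\wt\mu_j|\sim|\mu_j|$ for $2\leq j\leq J$. Splitting each $|\wt\mu_j|^{-1}$ as the geometric mean of $(b_j N)^{-1/2}$ and $\jb{\mu_j}^{-1/2}$, I obtain the pointwise bound
\begin{equation*}
\bigl|\Tcal_{\Qcal}^{(J+1)}(\TT,b;v)(\xi)\bigr| \lesssim \biggl(\prod_{j=1}^{J-1} b_j^{-\frac12}\biggr) N^{-\frac J2}\cdot \SF(\TT;v_1,\ldots,v_{2J+1})(\xi),
\end{equation*}
where $v_a=v$ for $a\neq b$ and $v_b=\Qcal(v)$. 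Corollary~\ref{cor:estSF} then yields $\|\SF(\TT;v_1,\ldots,v_{2J+1})\|_{H^s}\leq C^J\|\Qcal(v)\|_{H^s}\|v\|_{H^s}^{2J}$, and combining with \eqref{NfrakHsest} gives an extra $\|v\|_{H^s}^5$, producing the power $\|v\|_{H^s}^{2J+5}$ in the final bound.

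Collecting all the combinatorial factors yields
\begin{equation*}
\|\Tcal_{\Qcal}^{(J+1)}(v)\|_{H_x^s} \lesssim \frac{c_J C^J (2J+1)}{b_1^{1/2}\cdots b_{J-1}^{1/2}}\, N^{-J/2}\|v\|_{H_x^s}^{2J+5},
\end{equation*}
and the prefactor is uniformly bounded in $J$ by the choice of $\beta_j$'s in Remark~\ref{rmk:choicebetas}. For the difference estimate \eqref{estTcalQcalJp1diff}, I would expand the difference as a telescopic sum over the $2J+5$ occurrences of $v$, applying the multilinear Lipschitz bound $\|\Qcal(v)-\Qcal(w)\|_{H^s}\lesssim(\|v\|_{H^s}^4+\|w\|_{H^s}^4)\|v-w\|_{H^s}$ for the terms where the substitution falls on the $\Qcal$ factor; the extra combinatorial factor $2J+5$ is still absorbed by Remark~\ref{rmk:choicebetas}. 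I expect no substantive obstacle beyond careful bookkeeping; the analytic content is essentially identical to Lemma~\ref{lem:Tcal0Jp1}, with the quintic-nonlinearity estimate \eqref{NfrakHsest} being the only genuinely new ingredient.
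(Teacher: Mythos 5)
Your proposal is correct and follows essentially the same route as the paper's proof: the same $c_J(2J+1)\sup$ reduction over trees and terminal nodes, the same splitting of each $|\widetilde{\mu}_j|^{-1}$ into $(b_jN)^{-1/2}\jb{\mu_j}^{-1/2}$ on $F_J$ to extract $N^{-J/2}$ and reduce to $\SF(\TT;\mathbf{v}_b)$ with $\Qcal(v)$ in the $b$th slot, followed by Corollary~\ref{cor:estSF}, \eqref{NfrakHsest}, and Remark~\ref{rmk:choicebetas}. The difference estimate via telescoping and the Lipschitz bound for $\Qcal$ likewise matches the paper's (abbreviated) treatment.
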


\begin{proof}
The proof is similar to the proof of Lemma~\ref{lem:Tcal0Jp1}. 
We have 
\begin{align}
\label{TcalQcalJsup}
\big\|\Tcal^{(J+1)}_{\Qcal} (v) \big\|_{H^s} 
& \leq  c_J(2J+1) \sup_{\TT\in \TF(J)}  \sup_{b\in T^{\infty}} \big\| \Tcal^{(J+1)}_{\Qcal} ( \TT,b;  v) \big\|_{H^s} \,,
\end{align}
where $\Tcal^{(J+1)}_{\Qcal} ( \TT,b;  v)$ denotes the (inner-most) summand in \eqref{defnTcalQcalJp1}. 
Fix $\TT\in \TF(J)$ and $b\in T^{\infty}$. 
Then we have 
\begin{align*}
\Tcal^{(J+1)}_{\Qcal} ( \TT,b;  v)   &\leq 
\int_{\bmxi\in\Xi_{\xi}(\TT)} 
  \ind_{F_J} \bigg(\prod_{j=1}^{J} \frac{ |\xi^{(j)}_2|}{|\widetilde{\mu}_j|} \bigg) 
 \bigg(\Qcal(v)(\xi_b) \prod_{\substack{a\in\TT^{\infty}\\ a\neq b}} v(\xi_a)\bigg) \\
&\lesssim  \int_{\bmxi\in\Xi_{\xi}(\TT)} 
  \bigg(\prod_{j=1}^{J} \frac{ |\xi^{(j)}_2|}{ (b_j N)^{\frac12} \jb{\mu_j}^{\frac12}} \bigg) 
          \bigg(\Qcal(v)(\xi_b) \prod_{\substack{a\in\TT^{\infty}\\ a\neq b}} v(\xi_a)\bigg)\\
&\lesssim \bigg( \prod_{j=1}^{J-1} b_j^{-\frac12} \bigg) N^{-\frac12 J} \cdot \SF(\TT; \mathbf{v}_b) \,,
\end{align*} 
where if $b$ is the $j$th terminal node of $\TT$, we put 
$$\mathbf{v}_b:=(v,\ldots,v,\underbrace{\Qcal(v)}_{j\text{th spot}},v,\ldots,v) \,.$$
Therefore, by Corollary~\ref{cor:estSF},  \eqref{NfrakHsest},  and \eqref{TcalQcalJsup},
\begin{align*}
\big\| \Tcal^{(J+1)}_{\Qcal} ( v) \big\|_{H_x^s(\R)} 
& \lesssim 
\frac{c_{J}  C^J (2J+1)}{b_1^{\frac12}b_2^\frac{1}{2}\cdots b_{J-1}^{\frac12}}  N^{-\frac12 J} \|v\|_{H^s(\R)}^{2J+5} 
\end{align*}
For the difference estimate \eqref{estTcalQcalJp1diff}, 
an observation analogous to that in the proof of Lemma~\ref{lem:TcalTcal1Jp1} 
(see also the proof of Lemma~\ref{lem:Tcal0Jp1}) applies and 
we  take into account Remark~\ref{rmk:choicebetas}. 
\end{proof}

\section{The estimates in a weak norm}
\label{sect:weakests}

Here, we prove the estimates necessary to rigorously justify 
the normal form equation \eqref{limiteq} for rough $H^s(\R)$-solutions of 
\eqref{irvgDNLS}, which is done explicitly in Section~\ref{sect:justif}. 
For this purpose, we have to be able to estimate $\partial_t v$, for $v\in C(I; H^s(\R))$ solution to \eqref{irvgDNLS}. 

It is clear that due to the derivative in the cubic nonlinearity, the estimate 
$$\big\|v^2 \partial_x \overline{v}\big\|_{H_x^s(\R)} \lesssim \|v\|^3_{H^s_x(\R)}$$
fails. 
However, if we weaken the norm in the left-hand side above, 
then we might be able to obtain an estimate satisfactory to our aims in Section~\ref{sect:justif}. 
Hence, with the following lemma, we identify a family of Sobolev norms weaker than the $H^s(\R)$-norm 
which can serve as a weak topology used to justify the normal form equation \eqref{limiteq}. 


\begin{lemma}
\label{estTcalvHsm1}
Let $s>\frac12$ and $\sigma\le s-1$. Then, we have the trilinear estimate
\begin{equation*}
\big\| v_1\big( \partial_x \overline{v_2} \big) v_3\big\|_{H^{\sigma}_x(\R)} \lesssim \prod_{j=1}^3 \|v_j\|_{H_x^s(\R)} \,.
\end{equation*}
\end{lemma}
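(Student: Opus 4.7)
The plan is to separate the derivative factor $\partial_x \overline{v_2}$, which naturally lies in $H^{s-1}(\R)$, from the product $v_1 v_3$, which lies in $H^s(\R)$ by the algebra property, and to reduce the lemma to a standard Sobolev bilinear product estimate.

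First I would observe that, by the continuous embedding $H^{s-1}(\R) \hookrightarrow H^{\sigma}(\R)$ for $\sigma \leq s-1$, it suffices to prove the estimate for $\sigma = s-1$. I would then factor
$$v_1 (\partial_x \overline{v_2}) v_3 = (v_1 v_3)\cdot\partial_x \overline{v_2}.$$
Since $s > \tfrac12$, the Banach algebra property of $H^s(\R)$ yields $\|v_1 v_3\|_{H^s(\R)} \lesssim \|v_1\|_{H^s(\R)} \|v_3\|_{H^s(\R)}$, while $\|\partial_x \overline{v_2}\|_{H^{s-1}(\R)} \leq \|v_2\|_{H^s(\R)}$ trivially. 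Hence the lemma reduces to the bilinear product estimate
$$\|fg\|_{H^{s-1}(\R)} \lesssim \|f\|_{H^s(\R)} \|g\|_{H^{s-1}(\R)} \qquad (s > \tfrac12). \qquad (\ast)$$

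The estimate $(\ast)$ is a standard Sobolev multiplication bound on $\R$: the sum of input regularities is $2s-1 > 0$, each input regularity dominates the target $s-1$, and the sum exceeds the target by $s > \tfrac12$. I would prove it by a short Littlewood--Paley decomposition $fg = \sum_{N_1, N_2} f_{N_1} g_{N_2}$: the low-high paraproduct ($N_1 \ll N_2$) is controlled using the Sobolev embedding $H^s(\R) \hookrightarrow L^\infty(\R)$ via $\|f_{N_1} g_{N_2}\|_{L^2} \leq \|f_{N_1}\|_{L^\infty} \|g_{N_2}\|_{L^2}$; the high-low paraproduct ($N_2 \ll N_1$) is controlled by Bernstein's inequality $\|g_{N_2}\|_{L^\infty} \lesssim N_2^{1/2} \|g_{N_2}\|_{L^2}$, after which a geometric sum $\sum_{N_2 \ll N_1} N_2^{3/2 - s}$ produces a gain $N_1^{1/2 - s}$ that is dyadically $\ell^2$-summable exactly when $s > \tfrac12$. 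The high-high interaction is handled analogously. Alternatively, one can write $(\ast)$ via two successive dualities, reducing everything to the Sobolev multiplication theorem on $\R$.

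The main obstacle in the plan is the bilinear product estimate $(\ast)$; it is standard but delicate, and the threshold $s > \tfrac12$ enters in two places, namely the algebra property (used for $v_1 v_3$) and the bilinear estimate itself, the latter reflecting the failure of $H^{1/2}(\R) \hookrightarrow L^\infty(\R)$. Conceptually, the derivative loss $|\xi_2|$ on the nonlinearity is absorbed by dropping a full power in the target regularity—this is precisely why the strong $H^s$-estimate fails but the weaker $H^{s-1}$-estimate succeeds.
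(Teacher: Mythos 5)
Your proposal is correct, but it follows a genuinely different route from the paper. The paper argues by duality directly on the trilinear expression: it rewrites the estimate as a bound on a quadrilinear form over the hyperplane $\xi=\xi_1-\xi_2+\xi_3$ with multiplier $m_4(\bar\xi)=\jb{\xi}^{\sigma}|\xi_2|/(\jb{\xi_1}^s\jb{\xi_2}^s\jb{\xi_3}^s)$, splits into four cases according to which two of the four frequencies are largest (hence comparable), shows in each case that $m_4(\bar\xi)\lesssim \jb{\xi_{k_1}}^{-s}\jb{\xi_{k_2}}^{-s}$ for a suitable pair of frequencies, and closes with Cauchy--Schwarz and the embedding $H^s(\R)\hookrightarrow L^\infty(\R)$. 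You instead regroup the product as $(v_1v_3)\,\partial_x\overline{v_2}$ and reduce the lemma to the algebra property of $H^s(\R)$ together with the product estimate $\|fg\|_{H^{s-1}(\R)}\lesssim\|f\|_{H^s(\R)}\|g\|_{H^{s-1}(\R)}$ for $s>\frac12$, i.e.\ boundedness of multiplication by an $H^s$-function on $H^\sigma$ for $|\sigma|\le s$, which follows for instance by duality and interpolation between the $L^2\to L^2$ and $H^s\to H^s$ bounds; this is valid and more modular, at the cost of invoking (or reproving) that standard bilinear estimate, whereas the paper's argument is self-contained and uniform in style with its other multiplier estimates. One caveat about your Littlewood--Paley sketch of $(\ast)$: in the high-high interaction $N_1\sim N_2$ with output at low frequency, the bound $\|f_{N_1}g_{N_2}\|_{L^2}\le\|f_{N_1}\|_{L^\infty}\|g_{N_2}\|_{L^2}$ alone yields a factor $N_1^{3/2-2s}$, which does not sum for $\frac12<s\le\frac34$; one needs the extra gain coming from the smallness of the output frequency, e.g.\ $\|P_K(f_{N_1}g_{N_2})\|_{L^2}\lesssim K^{1/2}\|f_{N_1}\|_{L^2}\|g_{N_2}\|_{L^2}$, or simply the dual formulation --- which your alternative route via duality and the Sobolev multiplication theorem does supply, so the gap is only in the sketch, not in the strategy.
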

\begin{proof}
By duality, the desired estimate follows once we show: 
\begin{equation}
\label{wed:cubicderiv}
 \int_{\xi=\xi_1-\xi_2+\xi_3} 
 m_4(\bar{\xi}) 
v_1(\xi_1) v_2(\xi_2) v_3(\xi_3) v_4(\xi)  d\xi_1d\xi_2d\xi 
\lesssim \prod_{k=1}^4 \|u_k\|_{L_{\xi}^2(\R)} \,,
\end{equation}
for any  $v_1,\ldots, v_4\in L^2(\R)$ with $\widehat{v_j}\ge 0$ ($1\le j\le 4$), 
with the multiplier 
\begin{equation}
m_4(\bar{\xi}) = \frac{\jb{\xi}^{\sigma} |\xi_2|}{\jb{\xi_1}^s \jb{\xi_2}^s \jb{\xi_3}^s} \,.
\end{equation}
We study the boundedness of this multiplier, 
distinguishing which two of the four frequencies are the largest. 
On the convolution hyperplane, 
it must be that the largest two frequencies are comparable.  
Also, by the symmetry of $m_4$ with respect to $\xi_1, \xi_3$, 
we may assume without loss of generality that 
$|\xi_1|\geq|\xi_3|$. 

\smallskip
\noindent
\textbf{Case 1:} $|\xi|\sim |\xi_2|\gtrsim |\xi_1|, |\xi_3|$. 

In this case, since $\sigma+ 1-s\le 0$, we have
$$m_4(\bar{\xi}) \lesssim  \jb{\xi}^{\sigma+1-s}   \frac{1}{\jb{\xi_1}^s \jb{\xi_3}^s} 
 \leq \frac{1}{\jb{\xi_1}^s \jb{\xi_3}^s}\,.$$ 

\noindent
\textbf{Case 2:} $|\xi|\sim |\xi_1|\gtrsim |\xi_2|, |\xi_3|$.

Since $\sigma+ 1-s\le 0$, we have
$$m_4(\bar{\xi}) \lesssim  \jb{\xi}^{\sigma+1-s} \frac{\jb{\xi_2}^{1-s}}{\jb{\xi} \jb{\xi_3}^s} 
 \lesssim \frac{1}{\jb{\xi_2}^s \jb{\xi_3}^s}\,.$$ 

\noindent
\textbf{Case 3:} $|\xi_2|\sim |\xi_1|\gtrsim |\xi|, |\xi_3|$.

Since $s+\sigma\leq 2s-1$, we have$\jb{\xi}^{s+\sigma}\leq \jb{\xi}^{2s-1}\lesssim\jb{\xi_2}^{2s-1}$ for $s\geq\frac{1}{2}$, and
$$m_4(\bar{\xi}) \lesssim \frac{\jb{\xi}^\sigma}{\jb{\xi_2}^{2s-1}\jb{\xi_3}^s}
  \lesssim \frac{1}{\jb{\xi}^s \jb{\xi_3}^s}\,.$$

\noindent
\textbf{Case 4:} $|\xi_1|\sim |\xi_3|\gtrsim |\xi|, |\xi_2|$.

Since $s+\sigma\leq 2s-1$, we have $\jb{\xi_2}\jb{\xi}^{s+\sigma}\leq \jb{\xi_2}\jb{\xi}^{2s-1}\lesssim \jb{\xi_1}^{2s}$ for $s\geq\frac{1}{2}$, and
$$m_4(\bar{\xi}) 
 \lesssim \frac{\jb{\xi_2}^{1-s}\jb{\xi}^\sigma}{\jb{\xi_1}^{2s}}\lesssim\frac{1}{\jb{\xi}^s \jb{\xi_2}^s}\,.$$ 

\smallskip
In each of the four cases, 
there exist $k_1$, $k_2\in \{1,2,3,4\}$ , $k_1\neq k_2$ such that 
$$m_4(\bar{\xi}) \lesssim \frac{1}{\jb{\xi_{k_1}}^{\frac12+} \jb{\xi_{k_2}}^{\frac12+}}$$ 
(with the convention that $\xi_4=\xi$) and let $j$ denote the third index .
Then, by Cauchy-Schwarz inequality, 
the Sobolev embedding $H^s\hookrightarrow L^{\infty}$, and the fact that $H^s(\R)$ is a Fourier lattice, we have 
\begin{align*}
\textup{LHS of }\eqref{wed:cubicderiv} \lesssim 
 \prod_{k\in\{k_1,k_2\}} \left\|\jb{\partial_x}^{-s}\mathcal{F}^{-1}[|u_k|] \right\|_{L_x^{\infty}} 
 \|u_j\|_{L_{\xi}^2} \|u_4\|_{L^2_{\xi}}
 \lesssim \prod_{k=1}^4 \|u_k\|_{L^2_{\xi}} 
\end{align*}
and the proof is completed. 
\end{proof}

As a consequence of Lemma~\ref{estTcalvHsm1} and \eqref{NfrakHsest}, we have the following: 

\begin{corollary}
\label{lem:dtvest}
Let $s>\frac12$ and $v\in C(I; H^s(\R))$ be a solution to \eqref{irvgDNLS}. 
Then, uniformly in $t\in I$, we have
\begin{equation}
\|\partial_t v\|_{H^{s-1}_x(\R)} \lesssim \|v\|^3_{H^{s}_x(\R)} + \|v\|^5_{H^{s}_x(\R)} \,.
\end{equation}
\end{corollary}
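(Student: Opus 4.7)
The plan is to use \eqref{irvgDNLS} to write $\partial_t v = \mathcal{Q}(v) + \mathcal{T}(v)$ and to estimate each term separately in the $H^{s-1}_x(\R)$-norm, invoking the two ingredients that have just been established in this section together with the algebra estimate \eqref{NfrakHsest}. The key observation underlying the whole argument is that the Schr\"odinger propagator $S(t)=e^{it\partial_x^2}$ is a Fourier multiplier of modulus one and therefore an isometry on $H^{\sigma}(\R)$ for every $\sigma \in \R$, so we may replace $S(t)v$ by $v$ in all Sobolev estimates.

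For the quintic contribution, the embedding $H^s(\R) \hookrightarrow H^{s-1}(\R)$ together with \eqref{NfrakHsest} applied to $S(t)v$ yields
\begin{equation*}
\|\mathcal{Q}(v)\|_{H_x^{s-1}(\R)} \,\lesssim\, \|\mathcal{Q}(v)\|_{H_x^{s}(\R)} \,\lesssim\, \|S(t)v\|_{H_x^s(\R)}^5 \,=\, \|v\|_{H_x^s(\R)}^5 ,
\end{equation*}
since the Banach algebra property of $H^s(\R)$ for $s>\tfrac12$ applies to the product $|S(t)v|^4 S(t)v$.

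For the cubic contribution, recall from \eqref{Tcal} that
\begin{equation*}
\mathcal{T}(v) \,=\, -i\,\bigl(S(t)v\bigr)\cdot \bigl(S(t)v\bigr)\cdot \partial_x\overline{S(t)v},
\end{equation*}
which has exactly the trilinear structure $v_1\,(\partial_x\overline{v_2})\,v_3$ appearing in Lemma~\ref{estTcalvHsm1}. Choosing $\sigma=s-1$ (which clearly satisfies $\sigma\leq s-1$) and $v_1=v_2=v_3=S(t)v$, the lemma yields
\begin{equation*}
\|\mathcal{T}(v)\|_{H_x^{s-1}(\R)} \,\lesssim\, \|S(t)v\|_{H_x^s(\R)}^3 \,=\, \|v\|_{H_x^s(\R)}^3.
\end{equation*}

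Adding the two bounds and taking the supremum over $t\in I$ gives the claim. There is no real obstacle at this stage, since the entire difficulty of absorbing the derivative loss in $\mathcal{T}$ was already dispatched inside Lemma~\ref{estTcalvHsm1} by allowing the left-hand side to sit in the weaker space $H^{s-1}(\R)$ rather than $H^s(\R)$; the corollary is just the statement that the right-hand side of the equation, read in $H^{s-1}(\R)$, is controlled by $\|v\|_{H^s}^3 + \|v\|_{H^s}^5$.
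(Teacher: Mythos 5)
Your proof is correct and follows exactly the route the paper intends: the paper derives this corollary directly as a consequence of Lemma~\ref{estTcalvHsm1} (applied to $\mathcal{T}(v)$ with $\sigma=s-1$, using that $S(t)$ is an isometry on Sobolev spaces) together with the algebra bound \eqref{NfrakHsest} for $\mathcal{Q}(v)$, combined through the equation $\partial_t v=\mathcal{Q}(v)+\mathcal{T}(v)$. You have simply written out the one-line argument the paper leaves implicit, so there is nothing to add.
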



Next, for $M\geq1$, we consider the trilinear operator 
$\Tcal^{\textup{w}}_{|\Phi| >M}$  defined by 
\begin{equation}
\label{locmodNcalw}
\F\Big[\Tcal^{\textup{w}}_{|\Phi|>M}(v_1,v_2,v_3)\Big](t,\xi) = 
\int_{\substack{\xi=\xi_1-\xi_2+\xi_3\\ |\Phi(\bxi)| > M} }
\frac{ |\xi_2|}{\jb{\Phi(\bxi)} } 
\widehat{v_1}(\xi_1) \overline{\widehat{v_2}(\xi_2)} \widehat{v_3}(\xi_3) d\xi_1d\xi_2 \,,
\end{equation}
where $\Phi(\bxi)$ is given by \eqref{defn:Phi}.

\begin{lemma}[The estimate of $\Tcal^{\textup{w}}_{|\Phi|>M}$ in the $H^{s-1}(\R)$-norm]
\label{lem:WE2}
Let $s>\frac{1}{2}$ and $\theta=\theta(s)\colonequals\min\{2s-1,\frac{1}{2}\}$. 
Then, 
there exists a finite constant $C=C(s)>0$ such that 
\begin{equation*}
\|\Tcal^{\textup{w}}_{|\Phi| > M}(v_1,v_2,v_3)\|_{H^{s-1}(\R)} \leq C 
   M^{-\theta}
 \|v_j\|_{H^{s-1}(\R)} 
  \|v_k\|_{H^s(\R)} \|v_{l}\|_{H^s(\R)}, 
\end{equation*}
for any $j,k,\ell$ such that $\{j,k,l\}=\{1,2,3\}$ and for any $M\ge 1$. 
\end{lemma}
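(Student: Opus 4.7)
\smallskip
\noindent\textbf{Proof plan.} The approach parallels the proof of Lemma~\ref{locmodest}, with the key new ingredient being the extraction of the decay factor $M^{-\theta}$ from the modulation constraint $|\Phi(\bxi)|>M$. By duality, the claimed bound reduces to the quadrilinear estimate
\[
\int_{\xi=\xi_1-\xi_2+\xi_3} m(\bxi)\,\widehat{v_1}(\xi_1)\widehat{v_2}(\xi_2)\widehat{v_3}(\xi_3)\widehat{v_4}(\xi)\,d\xi_1 d\xi_2 d\xi
\,\lesssim\, M^{-\theta}\prod_{k=1}^4 \|v_k\|_{L^2(\R)},
\]
for $\widehat{v_k}\ge 0$, with the multiplier
\[
m(\bxi)\,=\,\ind_{\{|\Phi(\bxi)|>M\}}\cdot \frac{|\xi_2|\,\jb{\xi}^{s-1}}{\jb{\Phi(\bxi)}\,\jb{\xi_1}^{\alpha_1}\jb{\xi_2}^{\alpha_2}\jb{\xi_3}^{\alpha_3}},
\]
where $\alpha_j=s-1$ and $\alpha_k=s$ for $k\in\{1,2,3\}\setminus\{j\}$.

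\smallskip
\noindent\textbf{Extracting the $M^{-\theta}$ gain.} On the support $|\Phi(\bxi)|>M$ we split
\[
\frac{1}{\jb{\Phi(\bxi)}} \,=\, \frac{1}{\jb{\Phi(\bxi)}^{\theta}}\cdot\frac{1}{\jb{\Phi(\bxi)}^{1-\theta}} \,\le\, M^{-\theta}\,\jb{\Phi(\bxi)}^{-(1-\theta)}.
\]
When $s\ge\frac{3}{4}$, we have $\theta=\frac{1}{2}$ and the surviving weight $\jb{\Phi}^{-1/2}$ matches precisely that of Lemma~\ref{locmodest}, so the remaining task is exactly to rerun the case analysis of Lemma~\ref{locmodest} with the mild modification $\jb{\xi_j}^{-s}\rightsquigarrow\jb{\xi_j}^{-(s-1)}$ for a single slot $j$ and $\jb{\xi}^{s}\rightsquigarrow\jb{\xi}^{s-1}$ for the output weight; this trade is favourable because $\jb{\xi}^{-1}\lesssim\jb{\xi_j}^{-1}$ once we are in the frequency regime where $\xi$ is not the largest frequency, and otherwise $\jb{\xi}\sim\jb{\xi_k}$ for some other $k$ with $\alpha_k=s$. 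When $s\in(\tfrac{1}{2},\tfrac{3}{4})$, we have $\theta=2s-1$ and the surviving weight is $\jb{\Phi}^{-(2-2s)}$, which is actually \emph{stronger} than $\jb{\Phi}^{-1/2}$; this extra decay is precisely what is needed to absorb the loss of one derivative on the $j$-th slot.

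\smallskip
\noindent\textbf{Schur bounds and case analysis.} As in Lemma~\ref{locmodest}, by the Cauchy--Schwarz argument \eqref{eq:CS}, it suffices to show the Schur-type estimate
\[
\sup_{\xi_r\in\R}\bigg(\int_{\xi=\xi_1-\xi_2+\xi_3} m(\bxi)^2\, d\xi_k d\xi_\ell \bigg)^{\!1/2} \,\lesssim\, M^{-\theta}
\]
for some choice of $r\in\{1,2,3,4\}$ depending on the subcase. The symmetry $\xi_1\leftrightarrow\xi_3$ of $\Phi(\bxi)$ (recall $\Phi=2(\xi_2-\xi_1)(\xi_2-\xi_3)$) reduces the cases $j=1$ and $j=3$ to one another, so only $j\in\{1,2\}$ need to be treated. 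As in the proof of Lemma~\ref{locmodest}, we first dispose of the small-difference region $\min(|\xi_2-\xi_1|,|\xi_2-\xi_3|)\le 1$ (on which $m\lesssim M^{-\theta}$ and the argument of Case~1 there applies verbatim), and then split the remaining region according to which two of the four frequencies are comparable and large. In each subcase the factorisation $\Phi=2(\xi_2-\xi_1)(\xi_2-\xi_3)$ lets us bound $|\xi_2|/\jb{\Phi}^{1-\theta}$ by a product of negative powers of $\jb{\xi_2-\xi_1}$ and $\jb{\xi_2-\xi_3}$ and a positive power of $\jb{\xi_2}$, which is then absorbed by the Sobolev weights; the one-dimensional kernel bound \eqref{GVTtypeest} closes each integral.

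\smallskip
\noindent\textbf{Main obstacle.} The delicate subcase is $j=2$ (the conjugate derivative factor carries only $H^{s-1}$ regularity) combined with frequency geometries in which $|\xi_2|$ is the largest frequency, so that $\jb{\xi_2}^{-(s-1)}$ grows. Here one must balance the derivative loss $|\xi_2|\jb{\xi_2}^{-(s-1)}=|\xi_2|^{2-s}$ against the modulation decay $\jb{\Phi}^{-(1-\theta)}$; the worst configuration is $|\xi_1|\sim|\xi_2|$ or $|\xi_2|\sim|\xi_3|$ with $|\xi|$ small, where $|\Phi|\sim|\xi_2|\cdot|\xi_2-\xi_k|$ for one of $k\in\{1,3\}$, and one exploits the full strength of $\jb{\Phi}^{-(1-\theta)}$, noting that $1-\theta=\max\{\tfrac{1}{2},2-2s\}$ is exactly tuned so that the residual weight integrates to a constant via \eqref{GVTtypeest}. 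The choice $\theta=\min\{2s-1,\tfrac{1}{2}\}$ is sharp for this balance.
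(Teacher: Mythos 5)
Your setup coincides with the paper's: duality, the multiplier $m_j(\bxi)=\dfrac{|\xi_2|}{\jb{\Phi(\bxi)}}\cdot\dfrac{\jb{\xi_j}^{1-s}}{\jb{\xi}^{1-s}\jb{\xi_k}^s\jb{\xi_\ell}^s}$, extraction of $M^{-\theta}$ from $|\Phi(\bxi)|>M$, Schur/Cauchy--Schwarz bounds, the factorisation $\Phi=2(\xi_2-\xi_1)(\xi_2-\xi_3)$, and \eqref{GVTtypeest}. However, the core of the proof is missing: you assert that after peeling off $M^{-\theta}$ the remaining task is ``exactly to rerun'' Lemma~\ref{locmodest}, and the justification you give for this is wrong. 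Relative to the multiplier $m(\bxi)$ of Lemma~\ref{locmodest}, the new multiplier carries the extra factor $\jb{\xi_j}/\jb{\xi}$ (this is how the paper writes it in \eqref{defn:mj}). Your claim that the trade is favourable ``because $\jb{\xi}^{-1}\lesssim\jb{\xi_j}^{-1}$ once $\xi$ is not the largest frequency'' is backwards: $\jb{\xi}^{-1}\lesssim\jb{\xi_j}^{-1}$ means $\jb{\xi_j}\lesssim\jb{\xi}$, which holds precisely when $\xi$ is comparatively large. The dangerous regime is the opposite one, e.g.\ for $j=1$ the high--high-to-low interaction $|\xi_1|\sim|\xi_2|\gg|\xi|,|\xi_3|$, where $\jb{\xi_1}/\jb{\xi}$ is unbounded; your fallback remark that then $\jb{\xi}\sim\jb{\xi_k}$ for some $k$ with weight $s$ does not help, since that weight is already consumed in the Lemma~\ref{locmodest} analysis. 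This is exactly where the paper has to do genuinely new work: it re-expands $\jb{\Phi}\sim\jb{\xi_1}\jb{\xi-\xi_3}$ and runs separate computations for $s\ge 1$, $\tfrac34\le s<1$ and $\tfrac12<s<\tfrac34$ (its Cases 2--3 and Subcases 3.a--3.b), recovering only $M^{-(2s-1)}$ in the low-regularity range --- which is also why $\theta=\min\{2s-1,\tfrac12\}$ appears at all. Declaring this a ``mild modification'' skips the part of the lemma that is actually hard, and your heuristic for it would not survive being written out.

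Two further inaccuracies in the sketch: on the region $\min(|\xi_2-\xi_1|,|\xi_2-\xi_3|)\le 1$ it is \emph{not} true that $m_j\lesssim M^{-\theta}$ (take $|\xi_2|$ huge and $|\xi_2-\xi_1|\sim M/|\xi_2|$: then $m_1\sim |\xi_2|^{2-2s}/M\gg M^{-\theta}$); the correct bound, as in the paper's Subcase 1.1, is $m_1\lesssim M^{-(2s-1)}|\xi_2-\xi_1|^{-(2-2s)}$, and one must integrate the (integrable) singularity in $\zeta=\xi_2-\xi_1$ rather than apply Case~1 of Lemma~\ref{locmodest} verbatim. Also, singling out $j=2$ as the delicate case is somewhat misleading: since $m_2\lesssim m_1$ whenever $\jb{\xi_2}\lesssim\jb{\xi_1}$, the paper disposes of $j=2$ by comparison with $j=1$ (and directly when $\jb{\xi_2}\gg\jb{\xi_1}$, where $\jb{\Phi}\sim\jb{\xi_2}^2$ makes the bound easy); the real difficulty sits in the $j\in\{1,3\}$ high--high-to-low configurations described above.
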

\begin{proof}
Similarly to the proof of Lemma~\ref{locmodest}, 
it suffices to prove that
\begin{align}
\label{eq:MEjdual}
\int_{\substack{\xi=\xi_1-\xi_2+\xi_3\\ |\Phi(\bxi)| > M} } 
m_j(\overline{\xi}) 
 \widehat{v_1}(\xi_1) \widehat{v_2}(\xi_2) \widehat{v_3}(\xi_3) \widehat{v_4}(\xi) \,d\xi_1d\xi_2d\xi 
 \,\leq\, 
	C M^{-\theta} \prod_{j=1}^4 \|v_j\|_{L_x^2} \,,
\end{align}
for any  $v_1,\ldots, v_4\in L^2(\R)$ with $\widehat{v_j}\ge 0$ ($1\le j\le 4$). 
Also, by Cauchy-Schwarz inequality, it suffices to check that 
\begin{equation*}
\mathcal{M}^j_{k}:=
\sup_{\xi_k\in\R} 
\Bigg(
\int_{\substack{\xi=\xi_1-\xi_2+\xi_3\\ |\Phi(\bxi)| > M} } 
 m_j(\bxi)^2
 d\xi_{\ell_1} d\xi_{\ell_2} 
 \Bigg)^{\frac12}
 \leq 
 C M^{-\theta}\,,
\end{equation*}
for some $1\le k\le 4$, 
where 
the multiplier  is given by 
\begin{equation}
\label{defn:mj}
m_j(\bxi):=
 \frac{ |\xi_2|}{\jb{\Phi(\bxi)}} \cdot \frac{\jb{\xi_j}^{1-s} }{\jb{\xi}^{1-s}\jb{\xi_k}^s\jb{\xi_{\ell}}^s} 
 	 =  \frac{\jb{\xi_j}}{\jb{\xi} \jb{\Phi(\bxi)}^{\frac12} } m(\bxi)
\end{equation}
with $\{j,k,\ell\}=\{1,2,3\}$ and $m(\bxi)$ given by \eqref{defn:m}.

\smallskip 

Let us first prove the lemma for  $j=1$.

\smallskip 
\noi \textbf{Case 1:} $\min(|\xi_2-\xi_1|, |\xi_2-\xi_3|)\le 1$.

Since $m_1$ is not symmetric in $\xi_1$, $\xi_3$,
we  treat the following two subcases. 

\smallskip
\noindent
\textbf{Subcase 1.1:} $|\xi_2-\xi_1|\le 1$. Then $\jb{\xi_1}\sim \jb{\xi_2}$ and also $\jb{\xi_3}\sim \jb{\xi}$. 
We have 
$$m_1(\bxi) \sim \frac{|\xi_2|}{\jb{\Phi(\bxi)} \jb{\xi_3} \jb{\xi_1}^{2s-1}}
\lesssim \frac{|\xi_2|^{2-2s}}{\jb{\Phi(\bxi)} }\,.$$

Assume for now that $|\xi_2|\gg \jb{\xi_3}$. 
Then $\jb{\Phi(\bxi)} \sim \jb{\xi_2 (\xi_2-\xi_1)} $ and thus 
$$m_1(\bxi) \lesssim 
\frac{|\xi_2(\xi_2-\xi_1)|^{2-2s}}{ \jb{\Phi(\bxi)}^{\gamma} \jb{\xi_2(\xi_2-\xi_1)}^{2-2s} |\xi_2-\xi_1|^{2-2s}}
\lesssim \frac{M^{-(2s-1)}}{|\xi_2-\xi_1|^{1-(2s-1)}} \,.$$
Similarly to Case~1 in the proof of Lemma~\ref{locmodest}, 
we denote $\zeta := \xi_2 - \xi_1 = \xi_3 - \xi$ and 
by using  H\"older's  inequality, we get that 
\begin{align*}
\text{LHS of }\eqref{eq:MEjdual}
& \lesssim \int_{|\zeta|\leq 1} \frac{M^{-(2s-1)}}{|\zeta|^{1-(2s-1)}} 
  \int_{\xi_1}\widehat{v_1}(\xi_1) \widehat{v_2} (\xi_1 + \zeta) d\xi_1\int_{\xi_3} 
\widehat{v_3} (\xi_3) \widehat{v_4} (\xi_3 - \zeta)d\xi_3 \,d\zeta \\
& \leq \left(\int_{|\zeta|\leq 1} \frac{M^{-(2s-1)}  d\zeta}{|\zeta|^{1-(2s-1)}} \right) \bigg\|  \int_{\xi_1} \widehat{v_1}(\xi_1) \widehat{v_2} (\xi_1 + \zeta) d\xi_1\bigg\|_{L^\infty_\zeta}
\bigg\|\int_{\xi_3} \widehat{v_3} (\xi_3) \widehat{v_4} (\xi_3 - \zeta)d\xi_3 \bigg\|_{L^\infty_\zeta}\notag \\
& \lesssim  M^{-(1-2s)}   \prod_{j = 1}^4 \|v_j\|_{L^2} \,. 
\end{align*}

If $|\xi_2|\lesssim \jb{\xi_3}$, then  $m_1(\bxi)\lesssim M^{-1}$ 
and in the argument above we use $\int_{|\zeta|\le 1} d\zeta \lesssim 1$. 

\smallskip
\noindent
\textbf{Subcase 1.2:} $|\xi_2-\xi_3|\le 1$. 
Then $\jb{\xi_2}\sim \jb{\xi_3}$ and also $\jb{\xi_1}\sim \jb{\xi}$. We have 
$$m_1(\bxi) \sim \frac{|\xi_2|}{\jb{\Phi(\bxi)} \jb{\xi_2}^{2s}} \lesssim M^{-1} $$
and we argue as in Subcase 1.1 above. 

\smallskip
In all the cases below, we assume that $|\xi_2-\xi_1|>1$ and $|\xi_2-\xi_3|>1$.
If $|\xi_1|\lesssim |\xi|$, then from \eqref{defn:mj} and condition $|\Phi(\bxi)|\leq M$, we can see that
	\[m_1(\bxi)\lesssim M^{-\frac{1}{2}}m(\bxi),\]
where $m(\bxi)$ is given by \eqref{defn:m}.
From Case 2 in the proof of Lemma~\ref{locmodest}, we have
	\[\mathcal{M}_4^1\lesssim M^{-\frac{1}{2}}\]
for $s>\frac{1}{2}$.
So we may assume that $|\xi_1|\gg|\xi|$.
In the case when $s\geq 1$, we have $\jb{\Phi(\bxi)}\sim \jb{\xi_1}\jb{\xi_2-\xi_1}$, and
	\[m_1(\bxi)\lesssim \frac{1}{\jb{\Phi(\bxi)}^\frac{1}{2}}\cdot\frac{1}{\jb{\xi_1}^\frac{1}{2}\jb{\xi_2-\xi_1}^\frac{1}{2}\jb{\xi_3}^s}\leq \frac{M^{-\frac{1}{2}}}{\jb{\xi_1}^\frac{1}{2}\jb{\xi_2-\xi_1}^\frac{1}{2}\jb{\xi_3}^s}.\]
Thus, from \eqref{GVTtypeest}, we have
	\[\Mcal_{2}^1\lesssim M^{-\frac{1}{2}} \sup_{\xi_2\in\R}\bigg(\int_{\xi_3}\frac{1}{\jb{\xi_3}^{2s}}d\xi_3\int_{|\xi_2-\xi_1|>1}\frac{1}{\jb{\xi_1}\jb{\xi_2-\xi_1}}d\xi_1\bigg)^\frac{1}{2}\lesssim M^{-\frac{1}{2}}\]
for $s>\frac{1}{2}$.

\smallskip
For the case when $\frac{1}{2}<s<1$, we further consider the following two cases.

\smallskip
\noi  \textbf{Case 2:} $\max\{ \jb{\xi_1}^2, \jb{\xi_2}^2\}\lesssim \jb{\Phi(\bxi)}$.

In this case,
	\[m_1(\bxi) \lesssim  \frac{1}{\jb{\Phi(\bxi)}^\frac{s}{2}}\cdot \frac{|\xi_2|\jb{\xi_1}^{1-s}}{\max\{\jb{\xi_1}^2,\jb{\xi_2}^2\}^{1-\frac{s}{2}}\jb{\xi}^{1-s}} \cdot \frac{1}{\jb{\xi_2}^s\jb{\xi_{3}}^s} \leq \frac{M^{-\frac{s}{2}}}{\jb{\xi_2}^s\jb{\xi_3}^s}\]
and thus
	\[\Mcal_4^1 \lesssim M^{-\frac{s}{2}}\sup_{\xi} \bigg(\int_{\xi=\xi_1-\xi_2+\xi_3} \frac{d\xi_{2}d\xi_{3}}{\jb{\xi_2}^{2s}\jb{\xi_3}^{2s}}\bigg)^\frac{1}{2}\lesssim M^{-\frac{s}{2}}.\]

\smallskip
\noi  \textbf{Case 3:} $\max\{ \jb{\xi_1}^2, \jb{\xi_2}^2\}\gg \jb{\Phi(\bxi)}$.
	
By arguing as Case 2 in Lemma~\ref{locmodest}, it is enough to treat following two subcases.

\smallskip
\noi \textbf{Subase 3.a:} $|\xi_1|\sim|\xi_2|\gg |\xi|, |\xi_3|$.

In this case, we have $\jb{\Phi(\bxi)}\sim \jb{\xi_1}\jb{\xi-\xi_3}$.
When $\frac{1}{2} < s < \frac{3}{4}$,
	\[m_1(\bxi) \sim \frac{1}{\jb{\Phi(\bxi)}^{2s-1}}\cdot \frac{1}{\jb{\xi-\xi_3}^{2-2s}\jb{\xi_3}^s\jb{\xi}^{1-s}}\leq \frac{M^{-(2s-1)}}{\jb{\xi-\xi_3}^{2-2s}\jb{\xi_3}^s\jb{\xi}^{1-s}}\]
Thus, from \eqref{GVTtypeest}, we have
	\[\Mcal_1^1 \lesssim M^{-(2s-1)} \sup_{\xi_1} \bigg\{\int_{\xi_3}\frac{1}{\jb{\xi_3}^{2s}}\bigg(\int_{|\xi-\xi_3|>1} \frac{1}{\jb{\xi-\xi_3}^{4-4s}\jb{\xi}^{2-2s}}d\xi \bigg)d\xi_3\bigg\}^\frac{1}{2}\lesssim M^{-(2s-1)}.\]
On the other hand, if $\frac{3}{4}\leq s<1$, then 
	\[m_1(\bxi) \sim \frac{1}{\jb{\Phi(\bxi)}^{\frac{1}{2}}}\cdot \frac{1}{\jb{\xi_1}^{2s-\frac{3}{2}}\jb{\xi-\xi_3}^{\frac{1}{2}}\jb{\xi_3}^s\jb{\xi}^{1-s}}\leq \frac{M^{-\frac{1}{2}}}{\jb{\xi-\xi_3}^{\frac{1}{2}}\jb{\xi_3}^s\jb{\xi}^{s-\frac{1}{2}}}\]
and thus
	\[\Mcal_1^1 \lesssim M^{-\frac{1}{2}}\sup_{\xi_1}  \bigg\{\int_{\xi_3}\frac{1}{\jb{\xi_3}^{2s}}\bigg(\int_{|\xi-\xi_3|>1} \frac{1}{\jb{\xi-\xi_3}\jb{\xi}^{2s-1}}d\xi \bigg) d\xi_3\bigg\}^\frac{1}{2}\lesssim M^{-\frac{1}{2}}.\]
%

\smallskip
\noi \textbf{Subcase 3.b.} $|\xi_2|\sim|\xi_3|\gg|\xi_1|\gg|\xi|$.

This case follows from Subcase 3.a. by switching $1 \leftrightarrow 3$.

\smallskip
This finishes the proof for $j=1$. Notice that the case $j=3$ is symmetric to the case $j=1$. 
It remains to discuss the case $j=2$. In this case, by the symmetry of $m_2$ with respect to $\xi_1, \xi_3$, 
we may assume without loss of generality that 
$|\xi_1|\geq|\xi_3|$. 
If $\jb{\xi_2}\lesssim \jb{\xi_1}$, then it is easy to check that $m_2(\bxi) \lesssim m_1(\bxi)$ and 
thus \eqref{eq:MEjdual} for $j=2$ follows from \eqref{eq:MEjdual} for $j=1$. 

Now, let us assume that $j=2$ and that $\jb{\xi_2}\gg \jb{\xi_1}$. 
In fact, in this case, we have $\jb{\xi}\sim \jb{\xi_2}\gg \jb{\xi_1} \ge \jb{\xi_3}$ 
which implies $\jb{\Phi(\bxi)} \sim \jb{\xi_2}^2$ and  
$$m_2(\bxi) \sim  \frac{1}{ \jb{\Phi(\bxi)}^{\frac12}} \cdot \frac{1}{\jb{\xi_1}^s \jb{\xi_3}^s} 
\lesssim \frac{M^{-\frac12}}{\jb{\xi_1}^s \jb{\xi_3}^s}$$
which is square integrable on $(\R^2,d\xi_1 d\xi_3)$ for $s>\frac{1}{2}$. 
This concludes the proof of Lemma~\ref{lem:WE2} for all three possible values of $j$.  
\end{proof}

\begin{lemma}[The estimate of $\Tcal^{\textup{w}}_{|\Phi|>M}$ in the $H^{s}(\R)$-norm] 
\label{lem:SE2}
Let $s> \frac12$. 
Then, 
there exists a finite constant $C=C(s)>0$ such that 
\begin{equation*}
\|\Tcal^{\textup{w}}_{|\Phi| > M}(v_1,v_2,v_3)\|_{H^{s}(\R)} 		
	\leq C M^{-\frac12}
	\prod^3_{j=1}  \|v_j\|_{H^s(\R)} \,, 
\end{equation*}
 for any $M\ge 1$. 
\end{lemma}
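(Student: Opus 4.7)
The plan is to reduce this to the basic trilinear estimate of Lemma~\ref{locmodest} by a trivial pointwise bound on the multiplier. The key observation is that on the integration region $|\Phi(\bxi)| > M$ one has $\jb{\Phi(\bxi)} \gtrsim M$, so we can factor
\[
\frac{|\xi_2|}{\jb{\Phi(\bxi)}} \;=\; \frac{1}{\jb{\Phi(\bxi)}^{1/2}}\cdot\frac{|\xi_2|}{\jb{\Phi(\bxi)}^{1/2}}
\;\leq\; M^{-1/2}\cdot\frac{|\xi_2|}{\jb{\Phi(\bxi)}^{1/2}}\,\mathbf{1}_{|\Phi(\bxi)|>M}.
\]
That is, the multiplier of $\Tcal^{\textup{w}}_{|\Phi|>M}$ is pointwise dominated by $M^{-1/2}$ times the multiplier of $\Tcal_{\Phi}$ appearing in \eqref{locmodNcal}.

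To turn this into a norm estimate, I would invoke the Fourier lattice property of $H^s(\R)$ (as noted in the remark following Lemma~\ref{locmodest}) to replace each $\widehat{v_j}$ by its modulus without changing $\|v_j\|_{H^s}$. Once all three Fourier transforms are non-negative, the pointwise multiplier inequality immediately upgrades to the pointwise inequality of Fourier transforms
\[
\Big|\F\big[\Tcal^{\textup{w}}_{|\Phi|>M}(v_1,v_2,v_3)\big](\xi)\Big|
\;\leq\; M^{-1/2}\,\F\big[\Tcal_{\Phi}(\widetilde v_1,\widetilde v_2,\widetilde v_3)\big](\xi),
\]
where $\widetilde v_j\colonequals \F^{-1}(|\widehat{v_j}|)$ satisfies $\|\widetilde v_j\|_{H^s}=\|v_j\|_{H^s}$. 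Taking $H^s$-norms and applying Lemma~\ref{locmodest} to the right-hand side yields
\[
\|\Tcal^{\textup{w}}_{|\Phi|>M}(v_1,v_2,v_3)\|_{H^s(\R)} \;\leq\; M^{-1/2}\, C(s) \prod_{j=1}^3 \|v_j\|_{H^s(\R)},
\]
which is exactly the claim.

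There is no real obstacle here: all of the work already went into Lemma~\ref{locmodest}, and the modulation restriction $|\Phi(\bxi)|>M$ is being used in the crudest possible way to trade one full power of $\jb{\Phi(\bxi)}^{-1}$ for one half-power of $\jb{\Phi(\bxi)}^{-1}$ and a gain of $M^{-1/2}$. The only mildly delicate point is the use of the Fourier lattice property to pass to non-negative Fourier transforms, which is standard and already invoked earlier in the paper. No case analysis in frequency is needed.
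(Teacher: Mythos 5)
Your proposal is correct and follows exactly the paper's argument: the paper also proves Lemma~\ref{lem:SE2} as an immediate consequence of Lemma~\ref{locmodest}, using that the multiplier of $\Tcal^{\textup{w}}_{|\Phi|>M}$ carries an extra $\frac12$-power of $\jb{\Phi(\bxi)}$ in the denominator and that $|\Phi(\bxi)|>M$ on the integration region, which yields the factor $M^{-\frac12}$. Your write-up merely makes explicit the pointwise domination and the Fourier lattice step that the paper leaves implicit.
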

\begin{proof}
It is an immediate consequence of Lemma~\ref{locmodest} 
taking into account that the multiplier of the operator $\Tcal^{\textup{w}}_{|\Phi|>M}$ 
has an additional $\frac12$-power of $\jb{\Phi(\bxi)}$ in the denominator 
as compared to the multiplier of  $\Tcal_{\Phi}$ 
and that 
in the domain of integration we have $|\Phi(\bxi)|>M$. 
\end{proof}

\begin{definition}
\label{DEF:S} 
Let $J\ge 1$ and  $\TT\in  \TF(J)$.  
We define the $(2J+1)$-linear map $\SF^{\textup{w}}(\TT;\,\cdot\,)$ 
on space-time functions $v_j \in C(I; H^s(\R))$ ($1\le j\le 2J+1=|\TT^{\infty}|$) 
by the following rules. 

\begin{itemize}

\item[(i)] Replace the $j$th terminal node of $\TT$ 
by $v_j$, for all $j\in\{1,\ldots, 2J+1\}$.

\item[(ii)] 
For $j=J, J-1, \ldots, 1$, 
replace the $j$th root node $r^{(j)}$  
by  the  trilinear operator $\Tcal^{\textup{w}}_{|\Phi|>b_j N/2}$ 
whose  arguments are given by 
 the functions associated with its three children. 

\end{itemize}
\end{definition}

We have  the following immediate consequence of  Lemmata~\ref{lem:WE2} and \ref{lem:SE2}. 

\begin{corollary}
\label{cor:estSFw}
Let $s>\frac{1}{2}$, $\theta=\theta(s)=\min\{2s-1,\frac{1}{2}\}$, $J\ge 1$,  and $\TT\in \TF(J)$.    
Then, for any $1\le j\le 2J+1$ we have
\begin{equation*}
\big\|\SF^{\textup{w}}(\TT ; v_1,\ldots ,v_{2J+1}) \big\|_{H_x^s(\R)} \leq 
\frac{(2^{\theta}C)^J}{ b_1^{\theta} b_2^{\theta} \cdots b_{J-1}^{\theta} } N^{-\theta J} 
\|v_j\|_{H_x^{s-1}(\R)}
 \prod_{\substack{k=1\\ k\neq j}}^{2J+1} \|v_k\|_{H_x^s(\R)}   \,,
\end{equation*} 
where $C$ is the maximum between the two constants given by Lemmata~\ref{lem:WE2} and \ref{lem:SE2}.   
\end{corollary}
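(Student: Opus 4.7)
The plan is an iteration (or equivalently, an induction on the generation $J$) that mirrors the bottom-up construction of $\SF^{\textup{w}}(\TT;\,\cdot\,)$ in Definition~\ref{DEF:S}. The key combinatorial observation is that only a single input, $v_j$, is measured in the weaker $H_x^{s-1}$ norm, so the terminal node carrying $v_j$ is connected to the top root $r^{(1)}$ by a unique path in $\TT$; at each internal node $r^{(\ell)}$ along this path, exactly one of the three children's subtrees contains $v_j$. Along this path the intermediate functions are $H_x^{s-1}$-controlled and Lemma~\ref{lem:WE2} is invoked (placing the $H_x^{s-1}$ input in the appropriate slot), costing a factor $C(b_\ell N/2)^{-\theta}$ per application. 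Everywhere off this path all three children-subtrees carry only $H_x^s$-bounded inputs, and Lemma~\ref{lem:SE2} applies with the stronger gain $C(b_\ell N/2)^{-1/2}$. The base case $J=1$ is a direct application of Lemma~\ref{lem:WE2} with threshold $M = b_1 N/2 = N/2$ (the empty denominator product $b_1^\theta\cdots b_{J-1}^\theta$ equalling $1$); the inductive step decomposes $\TT$ at $r^{(1)}$, applies the hypothesis to the child subtree containing $v_j$ and an all-$H^s_x$ iteration of Lemma~\ref{lem:SE2} to the two sibling subtrees, and finally combines them through Lemma~\ref{lem:WE2} at $r^{(1)}$.

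To collate the constants, I would use $\theta \le 1/2$ together with $b_\ell N/2 \ge 1$ (valid since $b_\ell \ge 1$ by \eqref{defn:bJ} and $N$ is taken large enough) to uniformly bound $(b_\ell N/2)^{-1/2} \le (b_\ell N/2)^{-\theta}$. The multiplicative accumulation of the per-node gains over all $J$ internal nodes then yields a total constant no larger than
\[
 C^J \cdot 2^{\theta J} \cdot N^{-\theta J} \prod_{\ell=1}^J b_\ell^{-\theta}.
\]
Since $b_1 = 1$ by \eqref{defn:bJ} and $b_J^{-\theta} \le 1$, one further dominates by $(2^\theta C)^J N^{-\theta J}/(b_1^\theta b_2^\theta \cdots b_{J-1}^\theta)$, which matches the stated bound.

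The hard part will not be analytic but combinatorial: carefully formalizing the bookkeeping of which lemma is used at which node $r^{(\ell)}$ of $\TT$ --- in particular verifying that the $H_x^{s-1}$ norm propagates cleanly along the distinguished leaf-to-root path and nowhere else, so that no sibling subtree is ever estimated with two $H_x^{s-1}$ inputs. Once that tracking is set up using the ordered-tree formalism of Appendix~\ref{appdxA}, no new harmonic-analytic input is needed beyond Lemmata~\ref{lem:WE2} and \ref{lem:SE2}, which have already been established.
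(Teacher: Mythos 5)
Your proposal is correct and follows essentially the same route as the paper: apply Lemma~\ref{lem:WE2} at each root node on the unique path from the terminal node carrying $v_j$ up to $r^{(1)}$ (so the $H^{s-1}$ control propagates only along that path), apply Lemma~\ref{lem:SE2} at all other root nodes, and dominate the constants using $\theta\le\frac12$ and $b_\ell N/2\ge 1$, exactly as in the paper's worst-case ``linear tree'' accounting. The only cosmetic difference is that you organize the bookkeeping as an induction at $r^{(1)}$ while the paper runs the iteration top-down using the shortest-path observation of Remark~\ref{REM:order}.
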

\begin{proof}
We  apply iteratively Lemma~\ref{lem:WE2} or Lemma~\ref{lem:SE2}. 
Let $a_j$ denote the $j$th terminal node of $\TT$.
Since $\TT$ is a tree of generation $J$, 
it has $J$ many root nodes $r^{(1)}, r^{(2)}, \ldots, r^{(J)}$, where $r^{(j)}\in\pi_j(\TT)$, $1\leq j\leq J$.  
Let $1\leq k\leq J$ such that the root node $r^{(k)}\in \pi_k(\TT)$ is the parent of the $j$th terminal node $a_j$. 
We recall (see Remark~\ref{REM:order}) that there exists the shortest path $P(r^{(1)}, r^{(k)})=r^{(k_1)}, r^{(k_2)},\ldots,r^{(k_\ell)}$ of root nodes from $r^{(1)}\equalscolon r^{(k_1)}$ to $r^{(k)}\equalscolon r^{(k_\ell)}$, $1=k_1<k_2<\ldots<k_\ell=k$. 

We prove the desired estimate by moving top-down on $\TT$ with a chronicle $\{\TT_j\}_{j=1}^J$. 
Starting with  $j=1$, if $a_j$ is a child of $r^{(1)}$, then we just apply Lemma~\ref{lem:WE2}. Otherwise, $T_1$ has one child (and only one) that belongs to $P(r^{(1)}, r^{(k)})$ which is $r^{(k_2)}\in\pi_{k_2}(\TT)$, $1<k_2\leq k$. 
So we use Lemma~\ref{lem:WE2}, placing the subtree with root node $r^{(k_2)}$ in the $H^{s-1}(\R)$-norm and the other two subtrees (possibly, it can be just one node) in the $H^s(\R)$-norm. 
In a similar manner, we continue to move down the path $r^{(k_2)}, \ldots,r^{(k_{\ell-1})}, r^{(k)}$ and each time we apply Lemma~\ref{lem:WE2} analogously. 
For any subtree of $\TT$ whose root node  does not belong to $\{r^{(1)}, r^{(k_2)}, \ldots,r^{(k_{\ell-1})}, r^{(k)}\}$, 
we use Lemma~\ref{lem:SE2} in chronological order. 
Notice that (modulo the constant $C$), the coefficient provided by the latter lemma is smaller than the one provided by the former. In the worst cases scenario 
(i.e. the tree is ``linear'' so that $k=J$, and $P(r^{(1)},r^{(k)})=r^{(1)}, r^{(2)},\ldots, r^{(J)}$),
we only apply Lemma~\ref{lem:WE2} to pick up the coefficient
$$C^J \Bigg(\prod_{j=1}^{J-1} \bigg(\frac{b_j N}{2}\bigg)^{-\theta }  \Bigg) = (2^\theta C)^J \Bigg(\prod_{j=1}^{J-1} b_j^{-\theta }  \Bigg) N^{-\theta J} \,,$$
with $b_j$ given by \eqref{defn:bJ}. 
\end{proof}

\subsection{Convergence to zero of the remainder term}
\label{subsect:4p1}

Here, 
we argue that for fixed $N>1$,  the remainder term 
$\Tcal_{\Tcal}^{(J+1)}(v)$ of \eqref{NFReqstepJ} converges to zero in the $H^{s-1}(\R)$-norm as $J\to\infty$.

\begin{lemma}
\label{lem:convtozero}
Let $s>\frac{1}{2}$ and $\theta=\theta(s)=\min\{2s-1,\frac{1}{2}\}$.  
Then, for $\Tcal_{\Tcal}^{(J+1)}(v)$ given by \eqref{defnTcalTcalJp1}, 
we have 
\begin{equation}
\label{decayofremainder}
\|\Tcal_{\Tcal}^{(J+1)}(v)\|_{H^{s-1}(\R)} \lesssim N^{- \theta J} \|v\|_{H^s(\R)}^{2J+3} \,.
\end{equation}
\end{lemma}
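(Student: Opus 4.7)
The plan is to reduce to a per-tree estimate and identify $\Tcal_{\Tcal}^{(J+1)}(\TT;v)$ as an $\SF^{\textup{w}}$-expression on the generation-$J$ tree $\TT_J$ of the chronicle of $\TT\in\TF(J+1)$, with the cubic nonlinearity $\Tcal(v)$ plugged in at the leaf of $\TT_J$ where the last root $r^{(J+1)}$ was attached. First I would bound
\[\|\Tcal_{\Tcal}^{(J+1)}(v)\|_{H^{s-1}(\R)} \leq c_{J+1}\sup_{\TT\in\TF(J+1)}\|\Tcal_{\Tcal}^{(J+1)}(\TT;v)\|_{H^{s-1}(\R)}.\]
Then, for fixed $\TT$ with chronicle $\{\TT_j\}_{j=1}^{J+1}$ and $b$ denoting the terminal node of $\TT_J$ that becomes $r^{(J+1)}$ in $\TT$, I would note that carrying out the Fourier integral over the three children of $r^{(J+1)}$ produces exactly $\F[\Tcal(v)(t,\cdot)](\xi_b)$ (up to a unimodular constant), while the remaining multiplier is supported on the tree $\TT_J$ with the factor $\ind_{F_J}\prod_{j=1}^{J}\frac{\xi^{(j)}_2}{\wt\mu_j}$.

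Next I would verify the multiplier comparison. On $F_J$ we have $|\mu_1|=|\wt\mu_1|>N=b_1 N$, and iterating $|\wt\mu_j|>\beta_{j-1}|\wt\mu_{j-1}|$ with $\beta_{j-1}\geq 2$ gives $|\wt\mu_j|\gtrsim b_j N$ together with $|\mu_j|\sim|\wt\mu_j|$ (hence $|\mu_j|>b_j N/2$) for $j=2,\ldots,J$. Consequently $|\wt\mu_j|^{-1}\lesssim \jb{\mu_j}^{-1}$ pointwise on $F_J$, and therefore (via the Fourier lattice property of $H^{s-1}(\R)$)
\[\|\Tcal_{\Tcal}^{(J+1)}(\TT;v)\|_{H^{s-1}(\R)}\lesssim \bigl\|\SF^{\textup{w}}\bigl(\TT_J;v,\ldots,\Tcal(v)_{b},\ldots,v\bigr)\bigr\|_{H^{s-1}(\R)}\leq \bigl\|\SF^{\textup{w}}(\TT_J;\,\cdots)\bigr\|_{H^s(\R)}.\]
Applying Corollary~\ref{cor:estSFw} with the ``weak'' slot placed at position $b$, together with Lemma~\ref{estTcalvHsm1} (which yields $\|\Tcal(v)\|_{H^{s-1}(\R)}\lesssim\|v\|_{H^s(\R)}^3$), produces the per-tree bound
\[\|\Tcal_{\Tcal}^{(J+1)}(\TT;v)\|_{H^{s-1}(\R)}\lesssim \frac{(2^\theta C)^J}{b_1^\theta\cdots b_{J-1}^\theta}\,N^{-\theta J}\,\|v\|_{H^s(\R)}^{2J+3}.\]

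Summing over the $c_{J+1}$ trees in $\TF(J+1)$ and invoking Remark~\ref{rmk:choicebetas} (which in particular controls $c_{J+1}(2^\theta C)^J/(b_1^\theta\cdots b_{J-1}^\theta)$ uniformly in $J$) produces the desired estimate. The main delicate point is the multiplier identification on $F_J$: one must verify that \emph{each} $|\mu_j|$ is bounded below by $b_j N/2$ (and not merely $|\wt\mu_j|$), which relies on $\beta_{j-1}\geq 2$ separating $|\wt\mu_{j-1}|$ from $|\wt\mu_j|$ so that no cancellation occurs in $\mu_j=\wt\mu_j-\wt\mu_{j-1}$. Once this is in place, the conceptual key is that the ``bare'' derivative factor $|\xi^{(J+1)}_2|$ at the topmost root -- which forbids any $H^s(\R)$-bound on $\Tcal(v)$ -- is absorbed by feeding $\Tcal(v)$ as the single $H^{s-1}(\R)$-input to the $\SF^{\textup{w}}$ scheme, thereby relocating the unavoidable one-derivative loss from the output norm of the whole operator to one leaf inside.
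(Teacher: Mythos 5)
Your proposal is correct and follows essentially the same route as the paper: the paper likewise rewrites $\Tcal_{\Tcal}^{(J+1)}$ as the generation-$J$ operator $\Tcal_0^{(J+1)}(\TT,a_k;\mathbf{v}_k)$ with $\Tcal(v)$ inserted at the terminal node where the last generation was attached (your sum over $\TF(J+1)$ being the same count $c_{J+1}=c_J(2J+1)$ as the paper's sum over $\TF(J)$ and terminal nodes), compares $|\wt\mu_j|^{-1}$ with $\jb{\mu_j}^{-1}$ on $F_J$ to pass to $\SF^{\textup{w}}$, and then applies Corollary~\ref{cor:estSFw} together with Lemma~\ref{estTcalvHsm1} and Remark~\ref{rmk:choicebetas}. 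Your lower bound $|\mu_j|>b_jN/2$ and the role of $\beta_{j-1}\ge 2$ match the paper's argument, so there is no gap.
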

\begin{proof}
The formula \eqref{defnTcalTcalJp1} for $\Tcal_{\Tcal}^{(J+1)}(v)$ was obtained by replacing $\partial_t v$ with $\Tcal(v)$ in $\Tcal^{(J+1)}(v)$.
On the other hand, the same formula \eqref{defnTcalTcalJp1} can also obtained by replacing one $v$ in $\Tcal_0^{(J)}$ with $\Tcal(v)$.
More precisely, we can write 
\begin{equation}
\label{GeneralTcal0J+1}
\begin{aligned}
\Tcal_{\Tcal}^{(J+1)}(v) &= 
	\sum_{\TT\in\TF(J)} \sum_{j=1}^{2J+1} \int_{\bmxi\in\Xi_{\xi}(\TT)} 
 \ind_{F_J}  \Bigg( \prod_{j=1}^{J} \frac{e^{i {\mu}_{J}t } \xi^{(j)}_2}{\widetilde{\mu}_j}  \Bigg) 
 \bigg(\Tcal(v)(\xi_{a_k}) \prod_{\substack{a\in\TT^{\infty}\\ a\neq a_k}} v(\xi_a)\bigg)\\
&=:\sum_{\TT\in \TF(J)} \sum_{k=1}^{2J+1} \Tcal_{0}^{(J+1)}(\TT, a_k; \mathbf{v}_k) \,,
\end{aligned}
\end{equation}
where $a_j$ denotes the $k$th terminal node of  $\TT$,
and
for simplicity, we put 
$$\mathbf{v}_k=(v,\ldots,v,\underbrace{\Tcal(v)}_{k\text{th spot}},v,\ldots,v) \,.$$
We then have 
\begin{align}
\label{TcalTcalJp1sup}
\|\Tcal^{(J+1)}_{\Tcal} (v) \|_{H_x^{s-1}(\R)} 
& \leq  c_J(2J+1) \sup_{\TT\in \TF(J)}  
 \sup_{1\le k\le 2J+1}\| \Tcal_{0}^{(J+1)}(\TT, a_k; \mathbf{v}_k)\|_{H^{s-1}} \,.
\end{align}
Proceeding as in the proof of Lemma~\ref{lem:Tcal0Jp1}, 
we have $\frac12 |\wt \mu_j| < |\mu_j| < 2|\wt \mu_j|$ for $j=1, \ldots, J$ 
(due to the of integration being restricted to $F_J$)  and $|\mu_j| > \frac{1}{2}|\wt \mu_j| >\frac{b_{j-1}  N}{2}$. 
Therefore, we have 
\begin{align*}
\Tcal_{0}^{(J+1)}(\TT, a_j; \mathbf{v}_j) 
&\leq
\int_{\bmxi\in\Xi_{\xi}(\TT)} 
  \ind_{F_J} \bigg(\prod_{j=1}^{J} \frac{ |\xi^{(j)}_2|}{|\widetilde{\mu}_j|} \bigg) 
  \bigg(\Tcal(v)(\xi_{a_k}) \prod_{\substack{a\in\TT^{\infty}\\ a\neq a_k}} v(\xi_a)\bigg)  \\
&\leq 2^J \int_{\bmxi\in\Xi_{\xi}(\TT)} 
  \ind_{F_J} \bigg(\prod_{j=1}^{J} \frac{ |\xi^{(j)}_2|}{|{\mu}_j|} \bigg) 
  \bigg(\Tcal(v)(\xi_{a_k}) \prod_{\substack{a\in\TT^{\infty}\\ a\neq a_k}} v(\xi_a)\bigg) \\
&\leq  (2\sqrt{2})^J \SF^{\textup{w}}(\TT ; \mathbf{v}_k )
\end{align*}
With Corollary~\ref{cor:estSFw} and $\theta= \min\{2s-1,\frac{1}{2}\}$, 
we get  
\begin{equation}
\label{WeakTcal0J+1}
\|\Tcal_{0}^{(J+1)}(\TT, a_k; \mathbf{v}_k)  \|_{H^{s-1}_x(\R)} 
\le (4C)^J \bigg( \prod_{j=1}^J b_j\bigg)^{-\theta} N^{-\theta J} 
\|\Tcal(v)\|_{H_x^{s-1}(\R)} \|v\|^{2J}_{H_x^s(\R)}   
\end{equation}
for each $\TT\in\TF(J)$ and $1\leq j\leq 2J+1$.
Then, by \eqref{TcalTcalJp1sup} and Lemma~\ref{estTcalvHsm1} we get 
\begin{align*}
\|\Tcal^{(J+1)}_{\Tcal} (v) \|_{H_x^{s-1}(\R)} 
& \leq \frac{c_J(2J+1) (4C)^J}{b_1^{\theta} \cdots b_{J-1}^{\theta} } N^{-\theta J} \|v\|^{2J+3}_{H_x^s(\R)}
\end{align*}
The desired estimate \eqref{decayofremainder} follows by taking into account Remark~\ref{rmk:choicebetas}. 
\end{proof}

\section{Justification of the normal form reductions for rough solutions}
\label{sect:justif}

In each step of the infinite iteration in Section~\ref{sect2} 
we performed normal form reductions (NFR) which relied on two formal operations 
which obviously hold if $v$ is assumed to be a  smooth solution to \eqref{irvgDNLS}.   
Namely, 
(i) we applied the product rule 
when distributing the time derivative over products of several factors of $v$ 
(see e.g. \eqref{NFR1prod} below), 
and (ii) we switched the time derivative with  integrals in spatial frequencies 
(see e.g. \eqref{NFR1switch} below). 
In this section, we justify these operations for  a rough solution $v$ to \eqref{irvgDNLS}.

Let $s>\frac{1}{2}$, $\theta=\theta(s)=\min\{2s-1,\frac{1}{2}\}$,  and let $I$ be an interval containing $t=0$. 
Suppose that $v\in C(I; H^s(\R))$ is a solution to \eqref{irvgDNLS}, namely it satisfies (in the sense of distributions) the Duhamel formula
\begin{equation}
\label{DUHirvgDNLS}
v(t) = v_0 + \int_0^t \Qcal(v)(t')dt' + \int_0^t \Tcal(v)(t')dt' ,
\end{equation}
with $\Qcal$, $\Tcal$ as in \eqref{Qcal}, \eqref{Tcal}, respectively. 
By Lemma~\ref{lem:dtvest}, we have $v \in C^1_t\big(I;H_x^{s-1}(\R)\big)$. 
With $p$, $q\in (1,\infty)$ such that $\frac2p + \frac1q=1$ and $\frac12-\frac1q \le s-1$, 
by H\"{o}lder inequality and Sobolev embedding, we also have that 
\begin{align*}
\|v_1(\partial_x\overline{v_2})v_3\|_{L^1_x(\R)} 
 &\leq \|v_1\|_{L_x^p(\R)} \|\partial_x \overline{v_2}\|_{L^q_x(\R)} \|v_3\|_{L_x^p(\R)} \\
 &\lesssim \|v_1\|_{H_x^{s}(\R)} \|v_2\|_{H^s_x(\R)} \|v_3\|_{H_x^{s}(\R)}\, .
\end{align*}
Note that the condition $\frac12-\frac1p\le s$ is automatically satisfied. 
Therefore, we have
\begin{equation*}
\|\Tcal(v)\|_{H_x^{s-1}(\R)} + \|\Tcal(v)\|_{L_x^1(\R)}  
\lesssim \|v\|_{H^s_x(\R)}^3 \,.
\end{equation*}
Note that all of the above estimates hold uniformly in $t\in I$. 
For the quintic term in \eqref{DUHirvgDNLS}, we immediatelly have
\begin{equation*}
\|\Qcal(v)\|_{H_x^{s}(\R)} + \|\Qcal(v)\|_{L_x^1(\R)}  
\lesssim \|v\|_{H^s_x(\R)}^5 .
\end{equation*}
Moreover, by the Riemann-Lebesgue lemma, it follows that 
\begin{equation*}
\widehat{\Qcal(v)}\,,\, \widehat{\Tcal(v)} \in C_t(I; C_{\xi}(\R)) 
\end{equation*}
with 
\begin{align*}
&\|\widehat{\Tcal(v)}\|_{L^{\infty}_{\xi}(\R)} \lesssim \|v\|_{H^s_x(\R)}^3 \,, \\
& \|\widehat{\Qcal(v)}\|_{L^{\infty}_{\xi}(\R)}  \lesssim \|v\|_{H^s_x(\R)}^5 \,.
\end{align*}

By taking the Fourier transform of \eqref{DUHirvgDNLS}, 
by Fubini's theorem, 
we get
\begin{equation*}
\widehat{v}(t,\xi) = \widehat{v_0}(\xi) + \int_0^t \widehat{\Qcal(v)}(t',\xi)dt' 
  + \int_0^t \widehat{\Tcal(v)}(t',\xi)dt' . 
\end{equation*}
and by taking time derivative for fixed $\xi\in\R$, we have
\begin{equation*}
\partial_t \widehat{v}(t,\xi) = \widehat{\Qcal(v)}(t,\xi) + \widehat{\Tcal(v)}(t,\xi), 
\end{equation*}
for each $(t,\xi)\in I\times\R$. 
It follows that 
\begin{equation}
\label{vhatC1C}
\widehat{v} \in C^1_t\big(I;C_{\xi}(\R) \big).
\end{equation}

\subsection{Justification of the first step of NFR} 
Here, we carefully justify that $v$ is also a solution to \eqref{NF1gDNLS}, 
namely that  the Duhamel formula 
\begin{align*}
v(t) = v_0 & + \int_0^t \Qcal(v)(t')dt' + \int_0^t \Tcal_{\Tcal,1}^{(1)}(v)(t')dt' +  \Tcal_0^{(2)}(v)(t) - \Tcal_0^{(2)}(v)(0)  \\
&+  \int_0^t \Tcal_{\Qcal}^{(2)}(v)(t')dt'  + \int_0^t \Tcal_{\Tcal}^{(2)}(v)(t')dt' 
\end{align*}
is satisfied in the sense of distributions. 
Due to \eqref{vhatC1C}, 
it is immediate that the application of the product rule 
\begin{gather}
\label{NFR1prod}
\begin{split}
\partial_t\Big(\widehat{v}(t,\xi_1) \widehat{\partial_x v}(t,\xi_2) \widehat{v}(t,\xi_3)\Big)&=
 \big(\partial_t \widehat{v}(t,\xi_1)\big) \widehat{\partial_x v}(t,\xi_2) \widehat{v}(t,\xi_3)\\
 &\qquad + \widehat{v}(t,\xi_1) \big(\partial_t \widehat{\partial_x v}(t,\xi_2) \big) \widehat{v}(t,\xi_3)\\
 &\qquad + \widehat{v}(t,\xi_1) \widehat{\partial_x v}(t,\xi_2)  \big(\partial_t\widehat{v}(t,\xi_3)\big)
\end{split}
\end{gather}
is justified for all $t\in I$ and all  $\xi_1,\xi_2,\xi_3\in \R$.  

Next, we would like to justify the following: 
\begin{equation}
\label{NFR1switch}
\partial_t \Bigg[ \int_{\R^2} f(t,\xi,\xi_1,\xi_2) d\xi_1d\xi_2\Bigg] = 
	\int_{\R^2} \partial_t f (t,\xi,\xi_1,\xi_2)d\xi_1d\xi_2 \,,
\end{equation}
where the function $f:I\times\R^3\to \C$ is given by 
\begin{equation*}
f(t,\xi,\xi_1,\xi_2) =  \ind_{C_0} \frac{e^{i\Phi(\bar\xi)t}}{i\Phi(\bar\xi)} 
\widehat{v}(t,\xi_1) \overline{\widehat{\partial_x {v} }(t,\xi_2)} \widehat{v}(t,\xi-\xi_1+\xi_2) \,,
\end{equation*}
i.e. the integrand for $\Tcal_0^{(2)}(v)$ -- see \eqref{Tcal02}. 
We have that 
\begin{align*}
\partial_t f(t,\xi,\xi_1,\xi_2) 
&= \ind_{C_0} e^{i\Phi(\bar\xi)t} \widehat{v}(t,\xi_1) 
\overline{ \widehat{\partial_x {v} }(t,\xi_2) } \widehat{v}(t,\xi-\xi_1+\xi_2)\\
 &\quad + 
 \ind_{C_0} \frac{e^{i\Phi(\bar\xi)t}}{i\Phi(\bar\xi)}  \big(\partial_t \widehat{v}(t,\xi_1) \big)
 \overline{\widehat{\partial_x {v} }(t,\xi_2)} \widehat{v}(t,\xi-\xi_1+\xi_2) \\
 &\quad +\ind_{C_0} \frac{e^{i\Phi(\bar\xi)t}}{i\Phi(\bar\xi)}  \widehat{v}(t,\xi_1) 
 \big(\overline{\partial_t \widehat{\partial_x {v} }(t,\xi_2) }\big) \widehat{v}(t,\xi-\xi_1+\xi_2)\\
 &\quad + \ind_{C_0} \frac{e^{i\Phi(\bar\xi)t}}{i\Phi(\bar\xi)}  \widehat{v}(t,\xi_1) 
 \overline{\widehat{\partial_x {v} }(t,\xi_2)} \big(\partial_t\widehat{v}(t,\xi-\xi_1+\xi_2)\big)\\
 &=: g_1(t,\xi,\xi_1,\xi_2) +g_2(t,\xi,\xi_1,\xi_2) + g_3(t,\xi,\xi_1,\xi_2) + g_4(t,\xi,\xi_1,\xi_2)
\end{align*}
By omitting any complex constants of modulus one, we can write
\begin{align*}
\int_{\R^2} f(t,\xi,\xi_1,\xi_2) d\xi_1d\xi_2
&= \F\big[\Tcal_0^{(2)}(v) \big](t,\xi)\\
\int_{\R^2} g_1(t,\xi,\xi_1,\xi_2) d\xi_1d\xi_2 
 &= \F\big[\Tcal_2(v) \big](t,\xi) \\
\int_{\R^2} g_2(t,\xi,\xi_1,\xi_2) d\xi_1d\xi_2 
 & = \F\big[ \Tcal_0^{(2)}(\partial_t v, v,v) \big](t,\xi) \\
\int_{\R^2} g_3(t,\xi,\xi_1,\xi_2) d\xi_1d\xi_2 
 &= \F\big[  \Tcal_0^{(2)}(v,\partial_t v, v) \big](t,\xi)  \\
\int_{\R^2}g_4(t,\xi,\xi_1,\xi_2) d\xi_1d\xi_2 
 &= \F\big[  \Tcal_0^{(2)}(v,v,\partial_t v) \big](t,\xi) \,,
\end{align*}
where $\Tcal_0^{(2)}(v)$, $\Tcal_2(v)$ are given by \eqref{Tcal02}, respectively.
 Furthermore, we set  
\begin{align*}
&F:=  \Tcal_0^{(2)}(v)\,,\\
&G_1:= \Tcal_2(v)\,,\ 
G_2:= \Tcal_0^{(2)}(\partial_t v, v,v) \,,\ 
G_3:=  \Tcal_0^{(2)}(v,\partial_t v, v)\,,\ 
G_4:= \Tcal_0^{(2)}(v,v,\partial_t v) \,,
\end{align*}
$g:=g_1+g_2+g_3+g_4$, 
and $G:=G_1+G_2+G_3+G_4$. 
Thus \eqref{NFR1switch} follows once we show that 
$\partial_t  F= G$ 
holds  in the sense of distributions.

By Lemma~\ref{lem:WE2}, we deduce\footnote{For the continuity in time of $F$, 
one uses the multilinear version of the estimate provided by Lemma~\ref{lem:WE2}.} 
that $F\in C(I; H^{s-1}(\R))$ with 
\begin{equation}
\label{NFR1est1}
\|F(t)\|_{H_x^{s-1}} \lesssim N^{-\theta} \| v\|_{H_x^s}^3\,.
\end{equation}
Similarly, we have  that  $G\in C(I;H^{s-1}(\R))$ since 
%
%
by Lemma~\ref{estTcalvHsm1} and Lemma~\ref{lem:WE2}, we have
\begin{gather}
\label{NFR1est2}
\begin{split}
\|G(t)\|_{H_x^{s-1}}
&\leq  \big\|\Tcal_2(v)\big\|_{H_x^{s-1}} 
	+ \big\| \Tcal^{\textup{w}}_{|\Phi|>N}(\partial_t v, v,v) \big\|_{H_x^{s-1}}
	+ \big\| \Tcal^{\textup{w}}_{|\Phi|>N}(v,\partial_t v, v) \big\|_{H_x^{s-1}}\\
&\qquad\qquad\qquad\quad\ \	+\big\| \Tcal^{\textup{w}}_{|\Phi|>N}(v,v,\partial_t v) \big\|_{H_x^{s-1}}\\ 
&\lesssim \| v\|_{H_x^s}^3 +  \|\partial_t v\|_{H_x^{s-1}} \|v\|^2_{H_x^s}\\
&\lesssim  \| v\|_{H_x^s}^3+ \|v\|^5_{H_x^s} + \|v\|^7_{H_x^s}\,,
\end{split}
\end{gather}
where in the last step we applied Lemma~\ref{lem:dtvest}. 


Now fix $t\in I$ 
and let $\varphi\in \mathcal{S}(\R)$.  
By the Plancherel formula, we have 
\begin{align*}
\int_{\R} F(t,x) \varphi(x) dx &= \int_{\R^3} f(t,\xi,\xi_1,\xi_2) \widehat{\varphi}(\xi) d\xi_1d\xi_2 d\xi\,,\\
\int_{\R} G(t,x) \varphi(x) dx &= \int_{\R^3} g(t,\xi,\xi_1,\xi_2) \widehat{\varphi}(\xi) d\xi_1d\xi_2 d\xi\,.
\end{align*}
By appealing to the Fourier lattice property of the Sobolev spaces $H^{s-1}$, $H^{1-s}$,  
to the Riemann-Lebesgue lemma
and by using \eqref{NFR1est2}, 
we have
\begin{equation*}
 | g(t,\xi,\xi_1,\xi_2) \widehat{\varphi}(\xi)| 
\lesssim  \|G\|_{H_x^{s-1}} \big\| \F^{-1}\big[|\widehat{\varphi}|^{\frac12}\big]\big\|_{H_x^{1-s}} 
 	|\widehat{\varphi}(\xi)|^{\frac12}
 \lesssim_{\|v\|_{C(I;H^s(\R))}} |\widehat{\varphi}(\xi)|^{\frac12}\,.
\end{equation*}
and thus the dominated convergence theorem implies: 
\begin{equation*}
\partial_t \int_{\R} F(t,x) \varphi(x)dx = \int_{\R} G(t,x) \varphi(x)dx \,.
\end{equation*}

\subsection{Justification of the $J$th step of NFR} 
In justifying the first  step of NFR, the main ingredients\footnote{Whenever we apply the product rule, we appeal to \eqref{vhatC1C}.} are the estimates \eqref{NFR1est1} and \eqref{NFR1est2}. 
For a generic step $J$,  
we briefly show how to derive the corresponding estimates. 
To this end, fix $\TT\in \TF(J)$ and note that for \eqref{NFRJdiffparts}, we have used the following: 
\begin{equation}
\label{NFRJswitch}
\partial_t \Bigg[ \int_{\bmxi\in\Xi_{\xi}(\TT)} f(t,\xi,\bmxi) \Bigg] = 
	\int_{\bmxi\in\Xi_{\xi}(\TT)} \partial_t f (t,\xi,\bmxi)  \,,
\end{equation}
where the function $f:I\times\Xi(\TT)\to \C$ is given by 
\begin{equation*}
f(t,\xi,\bmxi) =   \ind_{F_J}
  \bigg(\prod_{j=1}^{J}  \frac{e^{i {\mu}_{j}t } \xi^{(j)}_2}{ \widetilde{\mu}_j} \bigg) \bigg(\prod_{a\in\TT^{\infty}} v(\xi_a)\bigg)\,,
\end{equation*}
i.e. the integrand for $\Tcal_0^{(J+1)}(v)$ -- see \eqref{defnTcal0Jp1}. 
Note that 
\begin{align*}
\int_{\bmxi\in\Xi_{\xi}(\TT)} f (t,\xi,\bmxi) &= \F\big[\Tcal_0^{(J+1)}(\TT;v)\big](t,\xi) =: \F\big[F\big](t,\xi)\,,\\
\int_{\bmxi\in\Xi_{\xi}(\TT)} \partial_t f (t,\xi,\bmxi) 
  &= \F\bigg[\Tcal_{\Tcal, 2}^{(J)}(\TT;v) + \sum_{k=1}^{2J+1} \Tcal_0^{(J+1)}(\TT,a_k; \widetilde{\mathbf{v}}_k)\bigg](t,\xi)=: \F\big[G\big](t,\xi)\,,
\end{align*}
where $\Tcal_0^{(J+1)}(\TT,a_k; \widetilde{\mathbf{v}}_k)$ in the summation above is defined by replacing $\mathbf{v}_k$ in \eqref{GeneralTcal0J+1} by
$$\widetilde{\mathbf{v}}_k=(v,\ldots,v,\underbrace{\partial_t v}_{k\text{th spot}},v,\ldots,v),$$
and $a_k$ is the $k$th terminal node  of $\TT\in\Tfrak(J)$.

Similarly to \eqref{WeakTcal0J+1} in the proof of Lemma~\label{lem:convtozero} with  Corollaries~\ref{lem:dtvest},
we have
\begin{align*}
\big\|\Tcal_0^{(J+1)}(\TT; v)\|_{H_x^{s-1}(\R)} &\lesssim  N^{-\theta J} \|v\|^{2J+1}_{H_x^s(\R)} 
\,,\\
\big\|\Tcal_0^{(J+1)}(\TT,a_k; \widetilde{\mathbf{v}}_{k})\|_{H_x^{s-1}(\R)} &\lesssim  N^{-\theta J} \|v\|^{2J+3}_{H_x^s(\R)}
 \big( 1+ \|v\|^{2}_{H_x^s(\R)}  \big)\,,\quad k=1,\ldots,2J+1.
\end{align*}
Also, similarly to the proof of Lemma~\ref{lem:TcalTcal1Jp1}, 
with  Corollary~\ref{cor:estSFw} and Lemma~\ref{estTcalvHsm1}, 
we get 
\begin{align*}
\big\|\Tcal_{\Tcal, 2}^{(J)}(\TT;v)\|_{H_x^{s-1}(\R)} &\lesssim  N^{-\theta(J-1)} \|v\|^{2J+1}_{H_x^s(\R)}\,.
\end{align*}
It follows that $F,$ $G \in C(I; H^{s-1}(\R))$ with 
\begin{align}
\label{NFRJest1}
\|F\|_{H_x^{s-1}(\R)} &\lesssim \|v\|_{H_x^s(\R)}^{2J+1} \,,\\
\label{NFRJest2}
\|G\|_{H_x^{s-1}(\R)} &\lesssim \|v\|_{H_x^s(\R)}^{2J+1} + \|v\|_{H_x^s(\R)}^{2J+3} + \|v\|_{H_x^s(\R)}^{2J+5}\,.
\end{align}
Similarly to the previous subsection, by appealing to the dominated convergence theorem and 
\eqref{NFRJest1}, \eqref{NFRJest2} one justifies \eqref{NFRJswitch}.

\smallskip

Together with Lemma~\ref{lem:convtozero}, we conclude that
the Duhamel formula of the equation \eqref{limiteq} 
is satisfied in the sense of distributions, provided that $v\in C(I; H_x^s(\R))$ is a solution to \eqref{irvgDNLS}.

\section{Proof of Theorem~\ref{mainthm}} 
\label{sect:pfmainthm}

First, we summarilly go over the fixed point argument for \eqref{limiteq} 
with prescribed initial data $v(0)=v_0\in H^s(\R)$, 
$s>\frac{1}{2}$. Integrating the limit equation \eqref{limiteq} in time,  
we obtain the folllowing Duhamel formulation: 
\begin{gather}
\begin{split}
\label{NFEq:intf}
v(t)=&\, v_0 + \int_0^t\Qcal(v)(t')dt' + \sum_{j=2}^{\infty} \Big(\Tcal_0^{(j)}(v)(t) - \Tcal_0^{(j)}(v)(0)\Big)
	+ \sum_{j=2}^{\infty}  \int_0^t\Tcal^{(j)}_{\Qcal}(v)(t')dt'\\
	&\qquad + \sum_{j=1}^{\infty} \int_0^t \Tcal^{(j)}_{\Tcal,1}(v)(t')dt' \,.
\end{split}
\end{gather}
Let us denote the right-hand side of \eqref{NFEq:intf} by $\Gamma(v)$, 
and for simplicity we write $C_TH^s$ instead of $C([-T,T]; H^s(\R))$. 

Having the estimates of Section~\ref{Sect:strongests}, 
one can show that $\Gamma$ is a contraction on the ball 
$\mathcal{B}_T:=\{v \in C_TH^s : \|v\|_{C_TH^s}\le 2\|v_0\|_{H^s} \}$, 
provided  that $T>0$ and $N>1$ are appropriately chosen.  
Indeed, 
we set $R:=2\|v_0\|_{H^s}$,
and thus 
by Lemmata~\ref{locmodest}, \ref{lem:Tcal0Jp1}, \ref{lem:TcalQcalJp1}, 
 and \ref{lem:TcalTcal1Jp1}, we get
\begin{align*}
\|\Gamma(v)\|_{C_TH^s} &\leq \frac12 R + T R^5 
  + c \sum_{j=2}^{\infty} N^{-\frac{1}{2} (j-1)} R^{2(j-1)+1} +  c  T \sum_{j=2}^{\infty} N^{-\frac{1}{2} (j-1)} R^{2(j-1)+5}\\
  &\qquad + cT \sum_{j=1}^{\infty} N^{-\frac{1}{2} (j-2)} R^{2(j-1)+3}\\
 &\leq  \frac12 R + T R^5 + c \frac{N^{-\frac{1}{2}} R^3}{1-N^{-\frac{1}{2}}R^2} + cT \frac{N^{-\frac{1}{2}} R^7}{1-N^{-\frac12}R^2} \\
 &\qquad+ cTN^{\frac12} R^3 +cTR^5 + cT \frac{N^{-\frac{1}{2}}R^7}{1-N^{-\frac12}R^2}\\
 &\leq  \frac12 R + (1+c)T R^5  + 2c(1+2TR^4)N^{-\frac12}R^3 +c T N^{\frac12}R^3.
\end{align*}
for some $c=c(s)>0$, when $N \geq 4R^4$ so that $(1-N^{-\frac{1}{2}}R^2)^{-1}\leq 2$. 
First, we choose $T_1=T_1(R)>0$ such that $(1+c)T_1R^4 \leq \frac16$,  
then we choose $N=N(R)\ge 1+ 4R^4$ such that $2c(1+ 2T_1R^2)N^{-\frac{1}{2}}R^2 \leq \frac16$, 
and finally we choose $T=\min\big\{T_1, \frac16 (cN^{\frac12} R^2)^{-1}\big\}$.

By possibly choosing smaller $T$ and bigger $N$ and  by using the difference estimates of 
Lemmata~\ref{lem:Tcal0Jp1}, \ref{lem:TcalQcalJp1}, \ref{locmodest}, and \ref{lem:TcalTcal1Jp1},  
the contraction property of $\Gamma$ follows analogously. 
Therefore, by the contraction mapping principle, for given $v_0\in H^s(\R)$, 
there exists a unique $v\in C_TH^s$ 
satisfying \eqref{NFEq:intf}. Moreover, $\|v\|_{C_TH^s}\lesssim \|v_0\|_{H^s}$.



Now let us consider  two solutions $u_1, u_2\in C_TH^s$ of DNLS. 
By Lemma~\ref{gaugetrLip}, $w_1, w_2\in C_TH^s$ 
and 
$$\|u_1-u_2\|_{C_TH^s}\lesssim \|w_1-w_2\|_{C_TH^s} = \|v_1-v_2\|_{C_TH^s}\,,$$
where $v_j(t):=S(-t)w_j(t)$, $t\in [-T,T]$, are solutions to \eqref{irvgDNLS}.  
Then, by the arguments of Section~\ref{sect:justif}, $v_1, v_2$ 
are solutions of the normal form equation \eqref{limiteq} derived in Section~\ref{sect2}. 
Similarly to the above lines of reasoning,  
we deduce
$$\|v_1-v_2\|_{C_TH^s} = \|\Gamma(v_1) - \Gamma(v_2)\|_{C_TH^s} 
\lesssim \|v_1(0)-v_2(0)\|_{H^s}=\|u_1(0)-u_2(0)\|_{H^s}$$
and thus  any two solutions $u_1, u_2\in C_TH^s$ started from the same initial data 
must coincide on the time interval $[-T,T]$. 
By appealing to the time translation symmetry of  DNLS, 
we conclude that any initial data $u_0\in H^s(\R)$ determines a unique solution to DNLS 
which is continuous in time with values in $H^s(\R)$.

\appendix
\section{Notation: indexing by ordered trees}
\label{appdxA}

We include here the notation and terminology used in \cite[Section~3.1]{KOY} regarding the cubic NLS equation on the real line. 

\begin{definition} \rm
Given a partially ordered set $\TT$ with partial order $\leq$, we say that $b\in\TT$ with $b\leq a$ and $b\neq a$ is a child of $a\in\TT$, if $b\leq c\leq a$ implies either $c=a$ or $c=b$. If the latter condition holds, we also say that $a$ is the parent of $b$.
\end{definition}

As in \cite{ChristPowerSeries, Oh}, 
the trees refer to a particular subclass of ternary trees.

\begin{definition} \rm
A ternary tree $\TT$ is a finite partially ordered set satisfying the following properties:

\begin{itemize}
\item[] Let $a_1$, $a_2$, $a_3$, $a_4\in\TT$. If $a_4\leq a_2\leq a_1$ and $a_4\leq a_3\leq a_1$, then we have $a_2\leq a_3$ or $a_3\leq a_2$.

\item[] A node $a\in\TT$ is called terminal, if it has no child. A non-terminal node $a\in\TT$ is a node with exactly three children denoted by $a_1$, $a_2$ and $a_3$.\footnote{Note
that the order of children plays an important role in our discussion.
We refer to $a_j$ as the $j$th child of a non-terminal node $a \in \TT$.
In terms of the planar graphical representation of a tree, 
we set  the $j$th node from the left as the $j$th child $a_j$ of $a \in \TT$. 
}

\item[]  There exists a maximal element $r\in\TT$ (called the root node) such that $a\leq r$ for all $a\in\TT$. We assume that the root node is non-terminal.

\item[]  $\TT$ consists of the disjoint union of $\TT^0$ and $\TT^\infty$, where $\TT^0$ and $\TT^\infty$ denote the collection of parental (non-terminal) nodes and terminal nodes, respectively.
\end{itemize}
\end{definition}

Note that the number $|\TT|$ of nodes in a tree $\TT$ is $3j+1$ for some $j\in\mathbb{N}$, 
where $|\TT^0|=j$ and $|\TT^\infty|=2j+1$. 
Next, we recall the notion of ordered trees introduced in \cite{GKO}. 
Roughly speaking, an ordered tree ``remembers how it grew''.

\begin{definition}\label{DEF:tree2}\rm
We say that a sequence $\{\TT_j\}_{j=1}^J$ is a chronicle of $J$ generations, if
\begin{itemize}
\item[]  $\TT_j$ has $j$ parental nodes for each $j=1, \ldots, J$,
\item[]   $\TT_{j+1}$ is obtained by changing one of the terminal nodes in $\TT_j$,
denoted by $p^{(j)}$, 
 into a non-terminal node (with three children), $j=1, \ldots, J-1$.
\end{itemize}

\noi
Given a chronicle $\{\TT_j\}_{j=1}^J$ of $J$ generations, 
we refer to $\TT_J$ as an {\it ordered tree of the $J$th generation}. 
We use $\TF(J)$ to denote the collection of the ordered trees of the $J$th generation.

\end{definition}

Note that the cardinality of $\TF(J)$ is given by
	\begin{equation}\label{cj}
	|\TF(J)|=1\cdot 3\cdot 5\cdot\cdots\cdot (2J-1)=: c_J
	\end{equation}

\begin{remark}\rm

Given two ordered trees $\TT_J$ and $\wt{\TT}_J$
of the $J$th generation, 
it may happen that $\TT_J = \wt{\TT}_J$ as trees (namely as graphs) 
while $\TT_J \ne \wt{\TT}_J$ as ordered trees according to Definition \ref{DEF:tree2}.
Henceforth, when we refer to an ordered tree $\TT_J$ of  the $J$th generation, 
it is understood that there is an underlying chronicle $\{ \TT_j\}_{j = 1}^J$.

\end{remark}

\begin{definition} \label{DEF:tree3}\rm
(i) Given an ordered tree $\TT_J \in \TF(J)$ 
with a chronicle $\{\TT_j\}_{j = 1}^J$, 
we define a ``projection'' $\pi_j$, $j = 1, \dots, J$,  from $\TT_J$
to subtrees in $\TT_J$ of one generation 
by setting
\begin{itemize}
\item[]  $\pi_1(\TT_J) = \TT_1$, 
\item[]  
$\pi_j(\TT_J)$ to be the tree formed by the three terminal nodes in $\TT_j \setminus \TT_{j-1}$
and its parent,  $j = 2, \dots, J$.   
Intuitively speaking, 
$\pi_j(\TT_J)$ is the tree added in transforming $\TT_{j-1}$ into $\TT_j$.

\end{itemize}

\noi
We use $r^{(j)}$ to denote the root node of $\pi_j(\TT_J)$
and refer to it as the {\it $j$th root node}.
By definition, we have
\begin{align}
r^{(j)} = p^{(j-1)}.
\label{tree3}
\end{align}

\noi
Note that
$p^{(j-1)}$ is not necessarily a node in 
$\pi_{j-1}(\TT_J)$.

\smallskip
\noi
(ii) Given $j \in \{1, \dots, J-1\}$, 
$p^{(j)}$  appears as a terminal node of  $\pi_{k}(\TT)$
for exactly one $k \in \{1, 2\dots, j-1\}$.
In particular, 
$p^{(j)}$ is the $\l$th child of the $k$th root note $r^{(k)}$
for some $\l\in \{1, 2, 3\}$.
We define {\it the order of $p^{(j)}$}, denoted by 
$\#p^{(j)}$, 
to be this number $\l\in \{1, 2, 3\}$.

\smallskip
\noi
(iii) 
We  define the {\it essential terminal nodes} $\pi_j^\infty(\TT_J)$ of the $j$th generation by 
setting
\[\pi_j^\infty(\TT_J): = \pi_j(\TT_J)^\infty \cap\TT_J^\infty
= (\TT_j \setminus \TT_{j-1})\cap \TT_J^\infty.\]

\noi
By definition, 
$\pi_j^\infty(\TT_J)$ may be empty.
Note that $\{ \pi_j^\infty(\TT_J)\}_{j = 1}^J$ forms 
a partition of $\TT_J^\infty$.
\end{definition}

We record the following simple observation.

\begin{remark}\label{REM:order}\rm
Let $\TT \in \TF(J)$ be an ordered tree. 
Then, for each fixed  $j = 2, \dots, J$, 
there exists a path\footnote{A path is  a sequence of nodes $a_1, a_2, \dots, a_K$
such that 
$a_k$ and $a_{k+1}$ are adjacent.}
$a_1, a_2, \dots, a_K$,
starting at the root node $r = r^{(1)}$
and ending at 
the $j$th root node $r^{(j)}$
such that $a_k \ne r^{(\l)}$
for any $k = 1, \dots, K$
and $\l \geq  j+1$.
Namely, we can move from $r^{(1)}$
to $r^{(j)}$
without hitting a root node of a higher generation.

More concretely, given $r^{(j)}$, 
we know that it  appears as a terminal node of  $\pi_{j_1}(\TT)$
for exactly one $j_1 \in \{1, 2\dots, j-1\}$.
Similarly,  $r^{(j_1)}$ appears as a terminal node of  $\pi_{j_2}(\TT)$
for exactly one $j_2 \in \{1, 2\dots, j_1-1\}$.
We can iterate this process, which must terminate in a finite number of steps
with $j_k = 1$.
This generates the shortest path
$r^{(j_k)}, r^{(j_{k-1})}, \dots, r^{(j_1)}, r^{(j)}$
from $r^{(1)}$
to $r^{(j)}$
and we denote it by $P(r^{(1)}, r^{(j)})$.
Similarly, given $a\in \TT\setminus\{r^{(1)}\}$, 
one can easily construct the shortest path from $r^{(1)}$ to $a$
since $a$ is a terminal node of $\pi_k(\TT)$ for some $k$.
We denote this shortest path by 
$P(r^{(1)}, a)$.

\end{remark}

Given an ordered tree, 
we need to 
consider all possible frequency assignments
to nodes that are ``consistent''.

\begin{definition}\label{DEF:index}
\rm
Given an ordered tree $\TT \in \TF(J)$,  
we define an {\it index function} $\pmb{\xi}:\TT\to\mathbb{R}$ such that
\begin{align}
\xi_a=\xi_{a_1}-\xi_{a_2}+\xi_{a_3}
\label{index}
\end{align}

\noi
for $a\in\TT^0$, where $a_1$, $a_2$, and $a_3$ denote the children of $a$.
Here,  we identified $\pmb{\xi}:\TT\to\R$ with $\{\xi_a\}_{a\in\TT}\in\mathbb{R}^\TT$. 
We use $\Xi(\TT)\subset\R^{\TT}$ to denote the collection of such index functions $\pmb{\xi}$. 
Also, the collection of index functions $\bmxi\in \Xi(\TT)$ 
with fixed frequency $\xi\in\R$ at the root node of $\TT$ 
is denoted by $\Xi_{\xi}(\TT)$
\end{definition}

\begin{remark}\rm 
 If we associate functions $v_a = v_a(\xi_a)$ to each node $a \in \TT$, 
then the relation~\eqref{index} implies that $v_a = v_{a_1}* \cj{v_{a_2}}*v_{a_3}$.

\end{remark}

Given an ordered tree 
$\TT_J \in \TF(J)$  with a chronicle $\{ \TT_j\}_{j = 1}^J$ 
and associated index functions $\pmb{\xi} \in \Xi(\TT_J)$,
 we use superscripts to keep track of  ``generations'' of frequencies.

Consider $\TT_1$ of the first generation.
We define the first generation of frequencies by
\[\big(\xi^{(1)}, \xi^{(1)}_1, \xi^{(1)}_2, \xi^{(1)}_3\big) :=(\xi_r, \xi_{r_1}, \xi_{r_2}, \xi_{r_3}),\]

\noi
where $r_j$ denotes the three children of the root node $r$.

In general, the ordered tree $\TT_j$ 
of the $j$th generation is obtained from $\TT_{j-1}$ by
changing one of its terminal nodes $a  \in \TT^\infty_{j-1}$
into a non-terminal node.
Then, we define
the $j$th generation of frequencies by
\[\big(\xi^{(j)}, \xi^{(j)}_1, \xi^{(j)}_2, \xi^{(j)}_3\big) :=(\xi_a, \xi_{a_1}, \xi_{a_2}, \xi_{a_3}),\]

\noi
\noi
where $a_j$ denotes the three children of the  node  $a  \in \TT^\infty_{j-1}$.
Note that the parent node $a$ is nothing but the $j$th root node $r^{(j)}$
defined in Definition \ref{DEF:tree3}.

\medskip

Our main analytical tool is the localized modulation estimate
of Lemma~\ref{locmodest}.
Hence, it is important to keep track of the modulation 
for frequencies in  each generation.
We use $\mu_j$  to denote the corresponding modulation function  introduced at the $j$th generation.
Namely, we set
\begin{align*}
\mu_j & = \mu_j \big(\xi^{(j)}, \xi^{(j)}_1, \xi^{(j)}_2, \xi^{(j)}_3\big)
:= \big(\xi^{(j)}\big)^2 - \big(\xi_1^{(j)}\big)^2 + \big(\xi_2^{(j)}\big)^2- \big(\xi_3^{(j)}\big)^2 \notag \\
& = 2\big(\xi_2^{(j)} - \xi_1^{(j)}\big) \big(\xi_2^{(j)} - \xi_3^{(j)}\big)
= 2\big(\xi^{(j)} - \xi_1^{(j)}\big) \big(\xi^{(j)} - \xi_3^{(j)}\big), 
\end{align*}
where the last two equalities hold in view of \eqref{index}. 
We also use the following short-hand notation:
\begin{equation*}
\wt \mu_j := \sum_{k = 1}^j \mu_k.
\end{equation*}
Given $\xi\in\R$ and $\TT\in\mathfrak{T}(J)$, we use a short-hand notation for iterated integrals of the form
\begin{equation*}
 \int_{\bmxi\in\Xi_{\xi}(\TT)} \ [\,\cdot\,]\ 
 := 
 \underbrace{\int_{\R^2} \ldots \int_{\R^2}}_{J\text{ times}}\ [\,\cdot\,]\  
 d\xi^{(J)}_1d\xi^{(J)}_2\ldots  d\xi^{(1)}_1d\xi^{(1)}_2.
\end{equation*}

\subsection*{Acknowledgements}
The authors would like to thank their advisors Tadahiro Oh  and Soonsik Kwon 
 for the discussions about this work. 
Since part of this paper was completed while RM visited 
the Department of Mathematics at Kyoto University, Japan in February 2018, 
he would like to thank Professor  Yoshio~Tsutsumi for the hospitality and support.  
  The authors are grateful to Justin Forlano for his  proofreading and discussions about this work. 

RM further acknowledges support 
from the Maxwell Institute Graduate School in Analysis and its Applications, 
a Centre for Doctoral Training funded by 
the UK Engineering and Physical Sciences Research Council (grant EP/L016508/01), 
the Scottish Funding Council, Heriot-Watt University and the University of Edinburgh.
Most of the material of this article 
was included in Chapter~4 of the first author's PhD thesis \cite{MosincatThesis}.

\bigskip

\end{document}